\documentclass{amsart}

\usepackage{amssymb,amsfonts}
\usepackage[all,arc]{xy}

\usepackage[dvipsnames]{xcolor}
\usepackage{hyperref}
\hypersetup{colorlinks=true,citecolor=blue,linkcolor=blue,urlcolor=blue}
\usepackage{mathrsfs}
\usepackage{physics}
\usepackage{tikz}
\usepackage{comment}
\usepackage{verbatim}
\usepackage{caption}
\usepackage{graphicx}
\usepackage{float}
\usepackage{bm}
\usepackage{bbm}
\usepackage{mathtools}
\usepackage{geometry}

\usepackage[shortlabels]{enumitem}

\def\*#1{\mathbf{#1}}

\newcommand\eps{\varepsilon}

\newcommand\RR{\mathbb{R}}

\newcommand\R{\mathbb{R}}
\newcommand\E{\mathbb{E}}
\renewcommand\P{\mathbb{P}}
\newcommand\Id{\mathrm{Id}}
\newcommand\Hess{\mathrm{Hess}}
\newcommand\ds{\displaystyle}

\newtheorem{thm}[equation]{Theorem}
\newtheorem{cor}[equation]{Corollary}
\newtheorem{prop}[equation]{Proposition}
\newtheorem{lem}[equation]{Lemma}

\newtheorem{prob}{Problem}

\theoremstyle{definition}

\theoremstyle{remark}
\newtheorem{rmk}[equation]{Remark}

\numberwithin{equation}{section}

\allowdisplaybreaks

\title{On Talagrand's Convexity Conjecture
}

\author[Dongming (Merrick) Hua]{Dongming (Merrick) Hua}
\author[Antoine Song]{Antoine Song}
\author[Stefan Tudose]{Stefan Tudose}

\address{California Institute of Technology\\ Linde Hall, \#1200 E. California Blvd., Pasadena, CA 91125}
\email{dhua@caltech.edu}
\address{California Institute of Technology\\ 177 Linde Hall, \#1200 E. California Blvd., Pasadena, CA 91125}
\email{aysong@caltech.edu}
\address{Princeton University, Department of Mathematics, Princeton, NJ 08544}
\email{studose@princeton.edu}

\begin{document}

\begin{abstract}

We prove that any random vector in $\R^n$ which is dominated in convex order by a standard Gaussian vector can be written as the sum of three standard Gaussian vectors. This implies that any $1$-subgaussian random vector in $\R^n$ is the sum of a universal number of Gaussian vectors. It also solves M. Talagrand's convexity problem, which in turn implies a weak version of a combinatorial analogue to the problem. 
\end{abstract}
\maketitle

\section{Introduction}
 M. Talagrand posed the following question\footnote{Talagrand asks that the $A$ be balanced, namely that $\lambda x \in A$ for any $x \in A, \lambda \in [-1, 1]$. The second-named author shows in \cite[Theorem 1.1]{Son26} that these are equivalent formulations.} about subsets of $\RR^{n}$ with large Gaussian measure, where $A + B$ denotes the Minkowski sum and $\gamma_n$ is the standard Gaussian measure, see \cite[Problem 2.3]{Tal95}\cite[Conjecture 2.1]{Tal10}\cite[Problem 2.1]{Tal26}\cite[Problem 54]{Green24}:
\begin{prob}[Convexity problem \cite{Tal95,Tal10, Tal26}] \label{T}
Does there exist a positive integer $q$ such that for any $n\geq 1$ and any closed set $A$ in $\R^n$ with $\gamma_n(A) > \frac{2}{3}$, there is a convex body $K$  in $\R^n$ such that
$$\gamma_n(K)\geq \frac{1}{2}\quad \text{and}\quad  K\subset \underbrace{\vphantom{\Big|}A+\cdots+A}_{q\ \mathrm{times}}\quad ?$$
\end{prob} 

Motivations and variants for this problem are discussed at length in \cite{Tal95,Tal10,Tal26} where M. Talagrand identified several fruitful  connections to probability and combinatorics, see also \cite{Joh25,Son26} for further discussions.
Informally, Problem \ref{T} asks whether one can create convexity from large sets in Gaussian spaces through a uniform number of  Minkowski sum operations. What makes such a dimensionless property surprising is that, as is well known, large sets in Gaussian spaces can  look ``small'' geometrically due to concentration of measure. In fact, several strengthenings of this property have been shown to be false. In \cite[Proposition 2.6]{Tal95} M. Talagrand showed that one cannot take $q =2$ even if we allow rescaling $K$ by a universal constant, and in \cite[Theorem 1.3]{Joh25} S. Johnston proved based on optimal transport theory that if one is required to use convex operations instead of the Minkowski sum, the stronger variant of Problem \ref{T} is false. In order to make progress on Problem \ref{T}, the second-named author proposed in a previous work a strategy based on showing that Problem \ref{T} is actually \emph{equivalent} to the following problem concerning subgaussian random vectors, see \cite[Theorem 1.1]{Son26}. Recall that a random vector $X$ in $\R^n$ is centered $\kappa$-subgaussian if $\E[X]=0$ and for any unit vector $v$, we have $\mathbb{P}[|\langle X , v \rangle| \geq t] \leq 2\exp\left( -\frac{t^{2}}{2\kappa^{2}}\right)$.

\begin{prob}[Subgaussian vector problem]\label{S}
Does there exist a positive integer $q'$ such that for any $n\geq1 $ and any centered $1$-subgaussian random vector $X$ in $\R^n$, there exist standard Gaussian random vectors $G_1,...,G_q'$  in $\R^n$ with
$$X  =G_1+ \cdots +G_q'\quad ?$$
\end{prob}

A benefit of reformulating the geometric Problem \ref{T} into a question about sums of Gaussian vectors like Problem \ref{S} is to suggest new lines of attack from analysis, where powerful methods are available. This insight led to some progress on sums of Gaussian vectors and sums of large sets in \cite{Son26}. 
Pushing this approach further, we resolve Problem \ref{S}, and thus Problem \ref{T}, in the affirmative. The main tool is the following structural theorem about random vectors $X$ in $\RR
^{n}$ which are dominated in convex order by a standard Gaussian random vector $G$, meaning that $\mathbb{E}[f(X)] \leq \mathbb{E}[f(G)]$ for any convex function $f \colon \RR^{n} \to \RR$. We write $X \preceq_{\mathrm{cx}} G$ to denote domination in the convex order. 
\begin{thm}[Three Gaussians]\label{thm:main}
Given a random vector $X$ in $\mathbb{R}^{n}$ with $X \preceq_{\mathrm{cx}} G$ for a standard Gaussian random vector $G$, there are three standard Gaussian random vectors $G_1,G_2,G_3$ in $\R^n$ such that
$$X =G_1+G_2+G_3.$$
\end{thm}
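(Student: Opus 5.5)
We will use Strassen's theorem: since $X \preceq_{\mathrm{cx}} G$, there is a martingale coupling $(X,G)$ with $\E[G\mid X]=X$. Writing $G = X + Y$ with $\E[Y\mid X]=0$, we get a decomposition in which $X$ is recovered from a standard Gaussian by removing a conditionally centered noise. The aim is to convert this into $X=G_1+G_2+G_3$ with each $G_i$ exactly standard Gaussian.

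The intended mechanism is a reflection trick. Let $G' = X - Y$, the reflection of $G$ about $X$ in the conditional law. Then $G + G' = 2X$, so $X = \tfrac12 G + \tfrac12 G'$. The law of $G'$ is not Gaussian in general. Still, if the conditional law of $Y$ given $X$ were symmetric, $G'$ would have the same law as $G$, and we would obtain $X = \tfrac12(G+G')$. This is a sum of two vectors distributed as $N(0,\tfrac14 I)$, not of standard Gaussians. The third summand and a rescaling must therefore do the remaining work. Using three summands exactly, the target is to find three couplings with $\E$-level identities $X = G_1+G_2+G_3$. One natural attempt is to take $G_1 = G$ and then to write $X - G = -Y$ as $G_2+G_3$. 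This requires showing that $-Y$, which satisfies $\E[-Y\mid X]=0$, can be split into two standard Gaussians. A symmetric choice is $G_2 = -Y/2 + Z$ and $G_3 = -Y/2 - Z$, with an auxiliary $Z$ coupled to $(X,Y)$ so that both sums are exactly $N(0,I)$.

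Concretely, the plan proceeds in four steps:
\begin{enumerate}
\item Obtain the martingale coupling $G = X+Y$ via Strassen.
\item Reduce to the symmetric case by a conditional symmetrization. For this, use an independent Rademacher-type construction, or a second martingale coupling, so that the conditional law of $Y$ given $X$ is symmetric. The convex-order hypothesis should allow this.
\item Construct $Z$ with $-Y/2 \pm Z \sim N(0,I)$. Equivalently, find a pair $(U,V)$ of standard Gaussians with $U+V = -Y$. A candidate is $U = -Y/2 + W$ and $V = -Y/2 - W$, with $W$ chosen by a transport map conditional on $Y$. Since $\operatorname{Cov}(Y) \preceq I$ in the relevant sense, there is room to add this noise.
\item Verify that all three summands are exactly $N(0,I)$.
\end{enumerate}

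The main obstacle is the third step: realizing a given random vector (here $-Y$, which is convexly dominated by $2G$ in a suitable sense) as an exact sum of two standard Gaussians. This is itself a version of the original problem, with the constant $2$ in place of $1$. To handle it, we would first try a Gaussian-space rotation. Any vector of the form $\sqrt2\,\Gamma$, with $\Gamma$ conditionally centered and dominated by a standard Gaussian, splits as $\Gamma_a+\Gamma_b$ with $(\Gamma_a,\Gamma_b)=\big((\Gamma+\Gamma^\perp)/\sqrt2,(\Gamma-\Gamma^\perp)/\sqrt2\big)\cdot\sqrt2$. Here $\Gamma^\perp$ is built from the martingale coupling of $\Gamma$ so that $\Gamma\pm\Gamma^\perp$ are Gaussian, by rotation invariance of the pair $(G,G')$. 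Controlling the exact distributions in this rotation step, rather than only their convex order, is where the real work lies.
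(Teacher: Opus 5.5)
There is a genuine gap: your outer reduction is sound, but the step that carries the theorem is missing, and the tools you propose for it are false or circular.

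\textbf{What works.} Your reduction is the paper's: take $G_1=G$ from a martingale coupling $G=X+Y$, and show that the law of $-Y=X-G$ is a sum of two standard Gaussians. Once that holds, $(G_2,G_3)$ can be sampled conditionally on $-Y$.

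\textbf{Step~2 is false in general.} Take $n=1$ and $X=\pm a$ with probability $\tfrac12$ each, where $\Phi^{-1}(3/4)<a<\sqrt{2/\pi}$ (roughly $0.674<a<0.798$). The condition for $X\preceq_{\mathrm{cx}}sG$ is $a\le s\,\E|G|$, so $X\preceq_{\mathrm{cx}}(1-\eps)G$ for some $\eps>0$. Yet in every martingale coupling, the law $\mu_+$ of $G$ given $X=a$ satisfies $\mu_+\le 2\gamma_1$. If $\mu_+$ were symmetric about $a$, this would force $\tfrac12=\mu_+([a,\infty))\le 2\gamma_1([a,\infty))<\tfrac12$, a contradiction.

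\textbf{Step~3, as you frame it, is also false.} You reduce it to showing that a vector convexly dominated by $2G$ is a sum of two standard Gaussians. Since $G\preceq_{\mathrm{cx}}2G$, that would make every $X\preceq_{\mathrm{cx}}G$ a sum of two standard Gaussians. This contradicts the sharpness of ``three'' recorded in the introduction. So convex-order information about $Y$ cannot suffice, and a bound like $\mathrm{Cov}(Y)\preceq I$ is far weaker still.

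\textbf{The rotation step is circular.} Asking that $\Gamma\pm\Gamma^\perp$ be Gaussian for a non-Gaussian $\Gamma$ is the same splitting problem again. Also, your displayed pair sums to $2\Gamma$, not $\sqrt2\,\Gamma$.

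\textbf{The missing idea.} You must \emph{choose} the martingale coupling so that each conditional law of $G$ given $X$ is $1$-uniformly log-concave. The paper does this as follows:
\begin{enumerate}
\item It reduces to finitely supported $X$ (Lemma~\ref{lem:discrete approx}) with slack $X\preceq_{\mathrm{cx}}(1-\eps)G$. Lemma~\ref{lem:closure} later removes both reductions.
\item It maximizes the strictly concave dual entropy functional $g(U,V)$ of Section~\ref{entropy}. This functional is coercive precisely because of the slack.
\item The maximizer gives conditional densities $f_i^*(x)=e^{U_i+\langle V_i,x\rangle}/\sum_j p_je^{U_j+\langle V_j,x\rangle}$ with respect to $\gamma_n$ (Proposition~\ref{log-concave}). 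These are log-concave, so $f_i^*\gamma_n$ is $1$-uniformly log-concave.
\item Caffarelli's theorem makes $f_i^*\gamma_n$ the image of $\gamma_n$ under a $1$-Lipschitz map. The It\^o-based Lemma~\ref{lem:average of gaussians} then writes $G-x_i$, conditionally on $X=x_i$, as a sum of two standard Gaussians (Corollary~\ref{cor:quick app}).
\item These two vectors are standard Gaussian conditionally on each value of $X$, hence also unconditionally. This gives $G=X+Y+Z$.
\end{enumerate}
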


In \cite[Corollary 1.2]{Van25}, van Handel showed the following extension of M. Talagrand's celebrated subgaussian comparison theorem  for subgaussian random vectors \cite[Theorem 2.10.11]{Tal21}: there exists a universal constant $\kappa > 0$ so that $\kappa X \preceq_{\mathrm{cx}} G$ for any centered $1$-subgaussian $X$ (see also \cite[Subsection 2.4.2]{Son26} for an alternative derivation of this fact). Thus, Theorem \ref{thm:main} implies the following:
\begin{cor}\label{cor:subgaussian}
There exists $\kappa>0$ such that for any $n\geq 1$ and any centered $1$-subgaussian random $X$ in $\mathbb{R}^n$, there exist three standard Gaussian random vectors $G_1,G_2,G_3$ in $\mathbb{R}^n$ with
$$\kappa X =G_1+G_2+G_3.$$
\end{cor}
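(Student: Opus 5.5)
The corollary follows by combining van Handel's comparison result with Theorem \ref{thm:main}, so the plan has only two steps and no genuine obstacle.

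First, fix the universal constant $\kappa>0$ given by \cite[Corollary 1.2]{Van25}. This is the constant such that, for every $n\geq 1$ and every centered $1$-subgaussian random vector $X$ in $\R^n$, we have
$$\kappa X \preceq_{\mathrm{cx}} G,$$
where $G$ is a standard Gaussian vector in $\R^n$. Its key feature is that it is independent of $n$ and of the law of $X$. This uniformity is exactly what the statement requires: a single $\kappa$ working in all dimensions.

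Second, fix $n$ and a centered $1$-subgaussian $X$, and set $Y:=\kappa X$. By the first step, $Y$ is a random vector in $\R^n$ dominated in convex order by a standard Gaussian. Theorem \ref{thm:main} applied to $Y$ then produces standard Gaussian vectors $G_1,G_2,G_3$ in $\R^n$ with $Y=G_1+G_2+G_3$, that is, $\kappa X=G_1+G_2+G_3$.

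One point needs checking: that Theorem \ref{thm:main} can be applied to $Y$ as given, with the $G_i$ living on the same probability space as $X$, possibly enlarged. I would interpret the theorem in the same sense as in the statement of the corollary, namely as an almost-sure identity on a suitable enlargement of the probability space carrying $X$. If the theorem is instead read at the level of a coupling whose first marginal has the law of $Y$, the standard gluing (transfer) argument returns it to the original space, using a disintegration of the coupling along $Y$ and an independent uniform variable. Either way the corollary follows immediately.
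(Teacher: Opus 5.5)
Your proof is correct and is the paper's argument: van Handel's result \cite[Corollary 1.2]{Van25} gives a universal $\kappa$ with $\kappa X \preceq_{\mathrm{cx}} G$ for every centered $1$-subgaussian $X$, and Theorem \ref{thm:main} applied to $\kappa X$ finishes. Your discussion of enlarging the probability space is harmless but unnecessary, because the paper explicitly reads $\kappa X = G_1+G_2+G_3$ as an equality in distribution.
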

A positive answer to Problem \ref{S} is then immediate from Corollary \ref{cor:subgaussian}
since we can then take $q' = 3\lceil \kappa^{-1}\rceil.$ 
We remark that standard examples (e.g. see \cite[Subsection 1.2]{LSS22}) show that sums of two Gaussian vectors are not enough to express all centered $\kappa$-subgaussian random vectors, so the number three in Theorem \ref{thm:main} and Corollary \ref{cor:subgaussian} is sharp. Corollary \ref{cor:subgaussian} generalizes \cite[Theorem 0.3]{Son26}, where the statement was shown in dimension $1$ using an explicit construction. 
It also provides a strengthening of the subgaussian comparison theorem \cite[Theorem 2.10.11]{Tal21}, \cite[Corollary 1.2]{Van25}.
Other relevant results about sums of random vectors include \cite[Theorem 2]{Rhe92}, \cite[Theorem 5]{Mao19}, \cite[Lemma 5]{LSS22}, and \cite{MGV24}.

Once Corollary \ref{cor:subgaussian} is known, by the equivalence between Problem \ref{T} and Problem \ref{S}, we can already conclude that the convexity conjecture, namely Problem \ref{T}, has a positive answer.  
In fact, the following improvement of the first version of our paper, observed by M. Talagrand, yields the following corollary with explicit constants\footnote{See Section \ref{section:proof convexity} for a discussion of the constant $\frac{5}{6}$ versus $\frac{2}{3}$.}:
\begin{cor}\label{cor:main}
For any $n\geq 1$ and any closed set $A$ in $\R^n$ with $\gamma_n(A) > \frac{5}{6}$,
there is a symmetric convex body $K$  in $\R^n$ such that $$\gamma_n(K)\geq \frac{1}{2}\quad \text{and}\quad  K\subset 4(A+A+A).$$
\end{cor}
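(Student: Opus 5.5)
The plan is to turn Theorem \ref{thm:main} into a statement about sets by a union bound, and then build $K$ as a set of ``admissible targets''. The basic mechanism is as follows. Suppose $X$ is a random vector with $X\preceq_{\mathrm{cx}} G$ and $\P[X=x]\geq \frac12$ for some point $x$. Theorem \ref{thm:main} gives $X=G_1+G_2+G_3$. Since $\gamma_n(A)>\frac56$, each $\P[G_i\notin A]<\frac16$, so $\P[G_1,G_2,G_3\in A]>\frac12$. Hence the event $\{X=x\}\cap\{G_1,G_2,G_3\in A\}$ has positive probability, and $x\in A+A+A$. Thus the whole proof reduces to one task: exhibit a symmetric convex body $K$ with $\gamma_n(K)\geq\frac12$ such that every $y\in K$ has $y/4$ occurring as an atom of mass at least $\frac12$ of some law dominated by $G$ in convex order. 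The factor $\frac56$ is used exactly to make $3\cdot\frac16=\frac12$ in the union bound.

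The natural candidate is
$$K=\Big\{y:\ \exists\, X\preceq_{\mathrm{cx}}G \text{ with } \P[X=y/4]\geq \tfrac12\Big\}.$$
Symmetry of $K$ follows from the symmetry of $G$. Convexity is the next step. Given admissible laws $X_y$ and $X_z$ for $y$ and $z$, I would couple them so that their atoms at $y/4$ and $z/4$ overlap on an event of mass at least $\frac12$. I would then form $tX_y+(1-t)X_z$ under this coupling. By convexity of $f$ and Jensen's inequality, $\E f(tX_y+(1-t)X_z)\leq t\E f(X_y)+(1-t)\E f(X_z)\leq \E f(G)$. Under the coupling, this law has an atom of mass at least $\frac12$ at $(ty+(1-t)z)/4$. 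Closedness should follow from a weak-compactness argument, since the family of laws dominated by $G$ is tight.

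The main obstacle is the lower bound $\gamma_n(K)\geq\frac12$, and it must hold uniformly in $n$. The obvious source of admissible laws is conditional expectations $X=\E[G\mid\mathcal F]$. However, barycenters of sets of Gaussian measure at least $\frac12$ only fill a Euclidean ball of radius $O(1)$, and such a ball has tiny Gaussian measure in high dimension. So one atom carrying half the mass cannot come from a single conditioning. The mass-$\frac12$ requirement is also not negotiable, because the event $\{G_1,G_2,G_3\in A\}$ is only guaranteed to have probability slightly above $\frac12$. What I would try first is to let the admissible law depend on $y$ through more than one direction. Concretely, I would take a slab-type target $K=\{|\langle y,u\rangle|\leq c\}$ and its intersections over directions $u$, and exploit that $X$ only needs to be dominated, not equal in law. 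Symmetrizing $X$ and using the contraction $tX\preceq_{\mathrm{cx}}X$ gives room: $X$ can put mass $\frac12$ at $y/4$ and mass $\frac12$ on a spread-out compensating part. The factor $4$ is the slack that should make the convex-order inequality $\frac12 f(y/4)+\frac12\E f(W)\leq\E f(G)$ verifiable for $y$ in a set of Gaussian measure $\frac12$. One would check this first on the extremal convex functions $f(x)=(\langle x,v\rangle-s)_+$.

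If this direct construction does not reach measure $\frac12$, I would fall back on the equivalence between Problems \ref{T} and \ref{S} from \cite[Theorem 1.1]{Son26}. I would then track its constants, feeding in Theorem \ref{thm:main} (three Gaussians, with scale $1$ rather than the subgaussian constant $\kappa$ of Corollary \ref{cor:subgaussian}). The aim would be to show the equivalence outputs $K\subset 4(A+A+A)$ under the hypothesis $\gamma_n(A)>\frac56$.
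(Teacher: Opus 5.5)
There is a genuine gap, and it sits in the reduction itself: no set $K$ of the kind you describe can have Gaussian measure $\frac12$ in high dimension. Suppose $X\preceq_{\mathrm{cx}}G$ and $\P[X=x]\geq\frac12$ with $x\neq 0$. Test the nonnegative convex function $f(z)=\max(\langle z,x/|x|\rangle,0)$. This gives $\frac12|x|\leq \E f(X)\leq \E f(G)=\frac{1}{\sqrt{2\pi}}$, so $|x|\leq\sqrt{2/\pi}$. Hence your $K$ lies in the Euclidean ball $\overline{B}(0,4\sqrt{2/\pi})$, whose $\gamma_n$-measure tends to $0$ as $n\to\infty$. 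You noticed this obstruction for conditional expectations, but it holds for every law dominated by $G$, so the slab construction cannot rescue the plan. Since $f\geq 0$, the compensating part $W$ only makes $\frac12 f(y/4)+\frac12\E f(W)\leq \E f(G)$ harder to satisfy. The test function $(\langle x,v\rangle-s)_+$ with $s=0$, which you planned to check, already excludes every $y$ with $|y|>4\sqrt{2/\pi}$. The fallback to \cite{Son26} is a plan rather than an argument. The paper also remarks that this route gave the corollary only without explicit constants such as $4$ and $\frac56$.

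The missing idea is a duality step that reverses the quantifiers. You do not need a heavy atom at each point of $K$. You need to show that every suitably scaled law dominated by $G$ charges $A+A+A$, and then convert that into a convex body by separation. The paper first replaces $A$ by $A'=A\cap(-A)$, which has $\gamma_n(A')>\frac23$; this, not $3\cdot\frac16=\frac12$, is where $\frac56$ enters. It then argues by contradiction. Suppose $4(A'+A'+A')$ contains no symmetric convex body of measure $\geq c$ (first for $c<\frac12$, then pass to $c=\frac12$ by Blaschke selection). Take any convex $f\geq 0$ with $f(0)=0$ and $\E f(G)\leq 1$. By Markov, $\{f\leq 4\}$ has measure at least $\frac34$, so it must meet the compact set $C=B\setminus 4(A'+A'+A')$ for a large centered ball $B$. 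Hahn--Banach in the space of continuous functions on $C$ separates this convex family from the open set of functions $>4$. This yields a probability measure on $C$; after symmetrizing with a random sign, it is the law of an $X$ supported entirely outside $4(A'+A'+A')$ with $\frac14 X\preceq_{\mathrm{cx}}G$. Only now is Theorem~\ref{thm:main} applied: $\frac14X=G_1+G_2+G_3$, and the union bound gives $\P[\frac14X\in A'+A'+A']>0$, a contradiction. This argument needs only positive probability, which is compatible with Gaussian domination; your mass-$\frac12$ requirement is not.
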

In the first version of our paper, we established Corollary \ref{cor:main} without explicit constants by using a refinement of \cite[Theorem 1.1]{Son26}.  Shortly after, M. Talagrand observed a shorter and more direct argument based on the Hahn-Banach theorem to deduce Corollary \ref{cor:main} from Theorem \ref{thm:main}. This is the proof which  we will present in Section \ref{section:proof convexity}.
Corollary \ref{cor:main} then settles Problem \ref{T} thanks to standard facts about the geometry of Gaussian spaces, see Section \ref{section:proof convexity}.


 M. Talagrand asked in \cite[Problem 2.3]{Tal26} for a constructive proof  of Corollary \ref{cor:main}. Our proof proceeds by contradiction and is not constructive.
 In \cite[Problem 0.8]{Son26}, the second-named author proposed a Riemannian version of the convexity problem for spaces with nonnegative Ricci curvature.
 \medskip

In retrospect, one of the main takeaways of this paper is that, while Corollary \ref{cor:main} is a geometric statement, its proof and surrounding ideas connect ingredients from a surprisingly broad range of fields, including basic functional analysis and stochastic calculus, discrepancy theory in computer science \cite{Dad19}, Caffarelli's contraction theorem in optimal transport \cite{Caf00}, Talagrand's subgaussian comparison theorem in probability \cite{Van25}, Schr\"{o}dinger bridges in mathematical finance \cite{Kay25, NutzWiesel, schrodingerbridge}, and Laguerre tessellations or power diagrams in materials science \cite{inverselaguerre} and economics \cite{KMSW24}.

\medskip

It is noteworthy that the above results, which are proved using purely analytical methods, also have combinatorial consequences.
In \cite[Problem 3.4]{Tal95} \cite[Conjecture 7.1]{Tal10} \cite[Problem 2.4]{Tal26} M. Talagrand proposed several combinatorial analogues of Problem \ref{T}. 
Given a subset $I$ of $[N] \coloneqq \{1, \ldots, N\}$, we may define $$H_{I} \coloneqq \{J \subseteq [N] \mid I \subseteq J\}.$$ Note that by identifying the subsets of $[N]$ with elements of $\{0, 1\}^{N}$, we may view $H_{I}$ as a subset of $\{0, 1\}^{N}$. For $0 < p < 1$, we define a product measure $\mu_{p}$ on $\{0, 1\}^{N}$ by $((1-p)\delta_{0} + p \delta_{1})^{\otimes N}$.
We say that a subset $S \subseteq \{0, 1\}^{N}$ is $p$-small if there exists a family $\mathcal{I}$ of subsets of $[N]$ such that $$S \subseteq \bigcup_{I \in \mathcal{I}}H_{I}\quad \text{and}\quad
\sum_{I \in \mathcal{I}}\mu_{p}(H_{I}) = \sum_{I \in \mathcal{I}}p^{|I|} \leq \frac{1}{2}.$$ 
Given $A \subseteq \{0, 1\}^{N}$ and an integer $q>0$, we define 
$$A^{(q)} \coloneqq \left\{x \in \{0, 1\}^{N} \mid \forall \; x^{(1)}, \ldots, x^{(q)} \in A,\ \exists i \leq N,\ x_{i} = 1,\, x_{i}^{(1)} = \cdots = x_{i}^{(q)} = 0\right\}.$$ Translated into the language of subsets of $\{1, \ldots, N\}$, $A^{(q)}$ is the collection of subsets of $\{1, \ldots, N\}$ which cannot be covered by $q$ elements of $A$.
A version of the combinatorial conjectures  \cite[Conjecture 7.1]{Tal10} then asks:
\begin{prob}
[\cite{Tal10}]
\label{C}
    Does there exist an integer $q>0$  so that for any $N\geq 1$, any $0 < p <1$, and any subset $A$ of $\{0, 1\}^{N}$ with $\mu_{p}(A) \geq 1 - \frac{1}{q}$, 
    $A^{(q)}$ is $p$-small?
\end{prob}

One motivation for this conjecture and similar variants in \cite{Tal10, Tal26} comes from a reformulation of Problem \ref{T} in terms of half-spaces \cite[Conjecture 3.3]{Tal10}. In the combinatorial analogue, the spaces $H_{I}$ play the role of half-spaces, and the measure $\mu_{p}$ corresponds to the Gaussian measure. As outlined in \cite[Section 3]{Tal95}, an affirmative answer to Problem \ref{T} actually implies an affirmative answer to a weak version of Problem \ref{C}. In fact, as we will explain in Appendix \ref{app: combcons}, a consequence of Corollary \ref{cor:main} is the following:
 \begin{cor}\label{cor:comb}
 There exists $\varepsilon>0$  and $L\geq 1$ so that for any $N\geq 1$, any $0 < p <1$, and any subset $A$ of $\{0, 1\}^{N}$, if $\mu_{p}(A) \geq 1 - \varepsilon$ then $A^{(3)}$ is $p^L$-small.
 \end{cor}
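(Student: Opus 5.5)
We will deduce Corollary \ref{cor:comb} from Corollary \ref{cor:main} by Talagrand's transfer from Gaussian space to the biased cube \cite[Section 3]{Tal95}. The idea is to represent $\mu_p$ as the image of a Gaussian measure under a coordinatewise threshold map. The Gaussian convex body produced by Corollary \ref{cor:main} then gives, via its supporting half-spaces, a cheap cover of $A^{(3)}$ by the sets $H_I$.

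\textbf{Step 1 (encoding).} Choose $t=t(p)$ with $\gamma_1((t,\infty))=p$, and define $\phi\colon\R^N\to\{0,1\}^N$ by $\phi(y)_i=\mathbbm{1}_{y_i>t}$. Then $\phi_*\gamma_N=\mu_p$. Put $\tilde A=\phi^{-1}(A)$, which is closed after a harmless modification on a null set, so that $\gamma_N(\tilde A)=\mu_p(A)\ge 1-\varepsilon$. Taking $\varepsilon<1/6$, Corollary \ref{cor:main} gives a symmetric convex $K$ with $\gamma_N(K)\ge 1/2$ and $K\subset 4(\tilde A+\tilde A+\tilde A)$. It is better to first enlarge $\tilde A$ by a standard Gaussian concentration argument, or to apply the corollary to a rescaled set, so that $K$ has Gaussian measure close to $1$. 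Then $K$ contains a Euclidean-type body: by standard Gaussian geometry (Hahn--Banach plus measure of half-spaces), every supporting half-space $\{\langle u,y\rangle\le 1\}$ of $K$, with $|u|$ bounded, is large.

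\textbf{Step 2 (what membership in $A^{(3)}$ means).} Take $x\in A^{(3)}$ and let $J=\{i: x_i=1\}$. For any $a^{(1)},a^{(2)},a^{(3)}\in\tilde A$ there is $i\in J$ with $a^{(k)}_i\le t$ for all $k$, so that $\sum_k a^{(k)}_i\le 3t$. Hence every point $z\in K$ has some coordinate $i\in J$ with $z_i\le 12t$. Equivalently, $K$ misses the orthant $O_J=\{z: z_i>12t\ \forall i\in J\}$.

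\textbf{Step 3 (separation to cover).} $K$ and $O_J$ are convex and disjoint, so Hahn--Banach gives a separating functional. Since $O_J$ is an orthant, this functional is $u\ge0$ supported on $J$ with $\sum_{i\in J}u_i\cdot 12t\ge\sup_K\langle u,\cdot\rangle\gtrsim |u|_2$, the last bound because $\gamma_N(K)$ is large. This forces $t\sum_J u_i\gtrsim|u|_2$, which after thresholding the weights should yield a subset $I\subseteq J$ with $x\in H_I$ and $|I|\gtrsim$ a quantity making $p^{L|I|}$ small. To obtain a summable family, we will instead take $\mathcal I$ to be the minimal sets $I$ for which $O_I\cap K=\emptyset$ and bound $\sum_{\mathcal I} p^{L|I|}$. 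The key estimate is that $\mu_{p^L}$ of $\bigcup H_I$ pulls back to the Gaussian measure of a region outside a dilate of $K$, using that $p^L$ corresponds to threshold $\approx\sqrt L\, t$. We expect to get $\sum_I p^{L|I|}\le\sum_I \gamma_N(O'_I)$ for disjointified orthants at level $\sqrt{L}t$, all lying outside $CK$, whose total measure is at most $1/2$ for $L$ large.

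\textbf{Main obstacle.} The hardest step is controlling the \emph{sum} $\sum p^{L|I|}$ rather than merely the measure of the union. This requires an antichain/disjointness argument (e.g.\ choosing for each $x$ a canonical minimal $I$ and using that the events ``$y\in O_I$ at threshold $\sqrt L t$ with $I$ minimal'' are nearly disjoint), and it requires handling small $t$, i.e.\ $p$ near $1/2$, where a crude choice of $L$ already makes everything trivial. We would first attempt the canonical-minimal-set disjointification, losing constant factors absorbed into $L$.
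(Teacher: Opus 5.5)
\noindent There is a genuine gap, located in Step 3, which is where the whole content of the corollary lies.

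\noindent Your Steps 1 and 2 are sound. With $\tilde A=\phi^{-1}(A)$ (or its closure) and $K\subset 4(\tilde A+\tilde A+\tilde A)$, every $x\in A^{(3)}$ with support $J$ gives $K\cap O_J=\emptyset$. Consequently the minimal sets $I$ with $O_I\cap K=\emptyset$ do cover $A^{(3)}$.

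\noindent Step 3 does not bound the cost of that cover. The separation argument for a single $x$ only yields some $u\ge 0$ supported on $J$ with $12t\|u\|_1\ge h_K(u)\gtrsim \|u\|_2$, i.e.\ $|J|\gtrsim t^{-2}$. This is vacuous once $t\gtrsim 1$, and it says nothing about how many sets $H_I$ you need.

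\noindent Your fallback inequality $\sum_I p^{L|I|}\le \sum_I \gamma_N(O'_I)$ with disjointified pieces $O'_I$ goes the wrong way. Let $s$ be the level with $\gamma_1((s,\infty))=p^L$. Then $p^{L|I|}$ is exactly the Gaussian measure of the orthant $\{y_i>s\ \forall i\in I\}$, and any disjoint subpieces have smaller measure. So the inequality would require these orthants to be essentially disjoint. They are not: for minimal $I\neq I'$ they intersect in the orthant indexed by $I\cup I'$, of measure $p^{L|I\cup I'|}$, which is comparable to $p^{L|I|}$ when $I$ and $I'$ overlap heavily.

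\noindent Knowing that all these orthants lie outside a dilate of $K$ controls only the $\mu_{p^L}$-measure of the up-set generated by $\mathcal I$. It does not control $\sum_{I\in\mathcal I}\mu_{p^L}(H_I)$. For general antichains the sum can be exponentially larger: for all $k$-subsets of $[N]$ with $k=2Nq$, $q=p^L$ small, the sum grows like $(e/2)^k$ up to lower-order factors, while the union has measure $e^{-\Omega(k)}$. Bridging ``small measure'' and ``small cover cost'' is precisely the difficulty in Talagrand's combinatorial problems. No canonical choice of minimal sets achieves it; you must use the convexity of $K$ quantitatively.

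\noindent The missing ingredient is a chaining-type entropy bound on how many constraints are needed to describe the convex body. The paper obtains it in three steps.
\begin{enumerate}
\item It makes the body solid. It uses the two-sided threshold $F(y)_i=1_{|y_i|\ge t}$ with $p=\gamma_1(|y|\ge t)$, so that $F^{-1}(A)$ is solid for decreasing $A$; your one-sided $\phi$ only gives a coordinatewise decreasing set. It then replaces $K$ by the solid hull of $K\cap O$ for a measure-maximizing orthant $O$ (Theorem \ref{thm: solid}).
\item Now $\|\cdot\|_C$ is a $1$-unconditional norm with $\mathbb{E}\|G\|_C$ bounded by a universal constant. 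Talagrand's structure theorem for such norms \cite[Theorem 19.2.15, Exercise 19.2.16]{Tal21}, which rests on the generic chaining/majorizing measure machinery, produces sets $I_k$ with $L|I_k|\ge \log(k+1)$. These have the property that $\sum_{i\in I_k}x_i^2\le L^{-1}|I_k|$ for all $k$ forces $x\in A_{(3)}$ (Theorem \ref{thm: logsequences}).
\item The cover is $\mathcal J=\bigcup_k\{J\subseteq I_k:\ |J|=\lceil |I_k|/(9t^2L)\rceil\}$. The sum $\sum_{J\in\mathcal J_k}p^{M|J|}$ decays like a negative power of $k+1$ because $p\le e^{-t^2/2}$ and $|I_k|\ge L^{-1}\log(k+1)$.
\end{enumerate}
This logarithmic growth of $|I_k|$ is what your disjointification step would have to supply, and nothing in your Steps 1--3 provides it.
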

We emphasize that the conclusion of Corollary \ref{cor:comb} is $p^L$-smallness for some universal  $L\geq1$, which is much weaker than the $p$-smallness property asked by Problem \ref{C}. Nevertheless, note that the original version of the combinatorial convexity problem \cite[Problem 3.4 (The main combinatorial problem)]{Tal95} is indeed formulated\footnote{In \cite[Problem 3.4 (The main combinatorial problem)]{Tal95}, the set $A$ is additionally assumed to be ``decreasing'', which means that $y\in A$ whenever $y\subset x$ for some $x\in A$.
This can always be assumed because if $A$ is not decreasing and if $A_0$ is the smallest decreasing subset of $\{0,1\}^N$ containing $A$, then $A^{(q)} = A_0^{(q)}$.} in terms of $p^L$-smallness. In Corollary \ref{cor:comb}, the integer $q$ is just $3$, and it could be that $q=2$ suffices.

It was observed in \cite[Sections 7 and 8]{Tal10}\cite{Tal26} that the stronger combinatorial conjectures are closely connected to the Kahn-Kalai conjecture solved by J. Park and H.T. Pham \cite{Par24a,Pha25}, and a conjecture on selector processes \cite[Conjecture 5.7]{Tal10} proved in \cite[Theorem 1.2]{Par24b}. For instance, it can be checked that \cite{Par24a}
implies that if $\mu_p(A) \ge 1/2$, then $A^{(1)}$ is $\Omega(p/\log(N))$-small.
Corollary \ref{cor:comb} yields a new result when $p$ is larger than $\Omega((\log N)^{-\frac{1}{L-1}})$.
It is unclear if there is a path from the proof of Corollary \ref{cor:comb} to the stronger conjectures.

\subsection{Outline}
Theorem \ref{thm:main} reduces to the case when $X$ is a finitely supported random vector in $\mathbb{R}^n$ dominated by a standard Gaussian in convex order, see Lemma \ref{lem:discrete approx}. A key ingredient in our approach is a strengthening of the classical Strassen's theorem, which tells us that there is a martingale coupling between $X$ and $G$. By maximizing a certain notion of entropy in the choice of $G$, we can choose the martingale coupling so that the conditional distribution of $G$ given $X$ is $1$-uniformly log-concave. This is proved in  Proposition \ref{log-concave}. Then by \cite[Section 2]{Son26}, $G-X$ can be expressed as the sum of two standard Gaussian vectors. Consequently, $X$ is indeed a sum of three Gaussian vectors, which gives Theorem \ref{thm:main}. As explained above, Corollary \ref{cor:main} follows. 

In Appendix \ref{app: combcons}, we explain the argument to obtain the combinatorial consequence, Corollary \ref{cor:comb}, which is due to M. Talagrand \cite{Tal95}. 
First, Corollary \ref{cor:main}  implies a similar statement for ``solid'' sets\footnote{A set  $X\subset \R^n$ is solid if $(y_1,...,y_n)\in X$ whenever $(x_1,...,x_n)\in X$ and $|y_i|\leq |x_i|$ for all $i=1,...,n$.}. 
Thanks to results on Gaussian measures in Banach spaces, Corollary \ref{cor:comb} is then obtained by applying the solid version of Corollary \ref{cor:main} to the solid set $F_t^{-1}(A)\subset \R^N$, where $t$ is such that $p=\gamma_1(|x|\geq t)$ and $F_t((x_1,...,x_N)) := (1_{|x_1|\geq t},...,1_{|x_N|\geq t})$. 

In Appendix \ref{section:laguerre}, we discuss a result, Proposition \ref{prop:tessellation}, which was generated by a large language model and which provides an alternative approach to one step of the proof of Theorem \ref{thm:main}. We explain that it is in fact a corollary of a theorem about Laguerre tessellations \cite[Theorem 3.5]{inverselaguerre}, which itself follows from our Proposition \ref{log-concave}. This last observation provides a simple  connection between the literature on  Laguerre tessellations and Schr\"{o}dinger bridges.

\subsection{Statement on A.I. use}\label{sec:aistatement}

The content of this paper is the product of human authorship, apart from the explicitly indicated portions of Appendix \ref{section:laguerre}, and routine typographical error detection and literature searches. The first and second-named authors, working independently from the third-named author, had reached a resolution of Problem \ref{S} based on a result whose proof was generated  by GPT-5.5 Pro during a conversation\footnote{The interested reader can find a transcript here. {https://chatgpt.com/share/69fae923-68f8-83e8-9dea-aceba3639524}.} initiated by the first-named author, see Appendix \ref{section:laguerre}. In parallel, the third-named author independently arrived at a complete proof as well. Upon comparing the two initial approaches, we found the third-named author’s approach to be more general and conceptual. As such, we decided to focus and expand upon only the latter proof of Problem \ref{S} in the main body of the paper.

\subsection{Acknowledgements}
We are indebted to Ramon van Handel for putting the authors in contact,  which led to the present collaboration, and also for useful exchanges, references, and a minor correction. We are  grateful to Michel Talagrand for inspiring discussions, for letting us incorporate his simplification of one part of our paper and for a minor correction.
We also thank Jinyoung Park and Huy Tuan Pham for their explanations on the notion of $p$-smallness in combinatorics. 
 A.S. would like to thank people who helped with \cite{Son26}, which played a significant role in the genesis of this paper: this includes Assaf Naor, Samuel Johnston, Shiqi Song, Boaz Klartag, Yair Shenfeld, Daniel Dadush, Alexandar Nikolov, Haotian Jiang, Roman Vershynin, Jinyoung Park and Tom Hutchcroft.
A.S. is grateful to his mother-in-law Fengzhen Liu and wife Amy Zheng for their invaluable help with the newborn and for tolerating him working on this paper at the hospital.
A. S. was partially supported by NSF grant DMS-2405175 and an Alfred P. Sloan Research Fellowship. D. H. was partially supported by the Danny Koh Graduate Fellowship at Caltech. 

\medskip

\section{Basic properties of sums of Gaussian vectors}\label{sec: sums}

\subsection{Notations}

We denote the identity matrix in $\R^n$ by $\Id$ or $I_n$. 
Given random vectors $X$ in $\R^n$ and $Y_1,...,Y_k$ in $\R^n$ such that the $Y_i$'s are defined on the same probability space, we often abuse notation and  write $X=Y_1+\cdots+Y_k$ when $X$ and $Y_1+ \cdots +Y_k$ have the same probability distribution. 
We say that $X$ is the sum of $k$ standard Gaussian vectors in $\R^n$ 
if there is a coupling of $k$ random vectors  $Y_1,...,Y_k\sim \mathcal{N}(0,I_n)$ such that the sum has same probability distribution as $X$.  
Given a random vector $X$ we may sometimes define another random vector $Y$ with respect to $X$ and consider $X+Y$ under the implicit assumption that both vectors are defined on a common probability space (which may be enlarged if necessary).

\subsection{Gaussian, subgaussian, and log-concave vectors}

Given $\kappa>0$, a random vector $Y$ in $\R^n$ is called $\kappa$-subgaussian \cite[Definition 3.4.1, Proposition 2.6.1]{Ver} if for any unit vector $v$, we have $\mathbb{P}[|\langle Y,v\rangle |\geq t] \leq 2\exp(-\frac{t^2}{2\kappa^2})$. 

A function $f:\R^n\to \R\cup \{\infty\}$ is called \emph{log-concave} (resp. \emph{$1$-uniformly log-concave}) if $f(x)=\exp(-V(x))$ where $V:\R^n \to \R\cup \{\infty\}$ is convex, $V$ is twice differentiable on the interior of  $\{V<\infty\}$ and $\Hess V\geq 0$ (resp. $\Hess V\geq I_d$) on $\{V<\infty\}$. 
A  random vector in $\R^n$ or its corresponding probability measure on $\R^n$ are called \emph{log-concave} (resp. \emph{$1$-uniformly log-concave}) if the probability measure has a density function which is \emph{log-concave} (resp. \emph{$1$-uniformly log-concave}).

Let us also record here the following standard application of Prokhorov's theorem which will be used later:
\begin{lem}\label{lem:closure}   
Let $q$ be a positive integer. The set of random vectors in $\R^n$ which can be written as the sum of $q$ standard Gaussian vectors is closed under weak limits. 
\end{lem}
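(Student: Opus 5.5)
The plan is to realize each sum on a common probability space and pass to a limit of couplings, using tightness. Suppose $X^{(m)} \to X$ weakly, where each $X^{(m)}$ has the same law as $Y^{(m)}_1+\cdots+Y^{(m)}_q$. Here $(Y^{(m)}_1,\dots,Y^{(m)}_q)$ is a random vector in $(\R^n)^q$ whose marginals are all $\mathcal{N}(0,I_n)$. Let $\pi_m$ denote the joint law of this vector on $\R^{nq}$.

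The first step is tightness of $\{\pi_m\}$. Each marginal is the fixed Gaussian measure $\gamma_n$, which is tight. Given $\eps>0$, choose a compact set $K\subset\R^n$ with $\gamma_n(K^c)\le \eps/q$. Then a union bound gives $\pi_m\big((K^q)^c\big)\le \sum_i \P[Y^{(m)}_i\notin K]\le \eps$ for every $m$, and $K^q$ is compact. Prokhorov's theorem then gives a subsequence $\pi_{m_k}$ converging weakly to some probability measure $\pi$ on $\R^{nq}$.

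The second step identifies the limit. Coordinate projections are continuous, so the marginals of $\pi$ are the weak limits of the marginals of $\pi_{m_k}$. These are all $\gamma_n$, so if $(Y_1,\dots,Y_q)\sim\pi$, each $Y_i$ is a standard Gaussian vector. The summation map $s(y_1,\dots,y_q)=y_1+\cdots+y_q$ is continuous, so by the continuous mapping theorem $s_\#\pi_{m_k}\to s_\#\pi$ weakly. But $s_\#\pi_{m_k}$ is the law of $X^{(m_k)}$, which converges weakly to the law of $X$. Weak limits are unique, so $Y_1+\cdots+Y_q$ has the law of $X$. Hence $X$ is a sum of $q$ standard Gaussian vectors.

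None of the steps is a genuine obstacle. The only point needing care is tightness of the joint couplings, which follows immediately from the marginals being fixed, as shown above.
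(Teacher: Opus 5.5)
Your proof is correct and is exactly the argument the paper has in mind. The paper states the lemma without proof, calling it a standard application of Prokhorov's theorem. Your steps fill in that application completely: tightness of the couplings from the fixed Gaussian marginals, extraction of a weakly convergent subsequence, preservation of the marginals, and the continuous mapping theorem for the summation map.
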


\subsection{Convex Order}

    Given two random vectors $X, Y$ in $\RR^{n}$, we say that $X$ is dominated by $Y$ in convex order if $\mathbb{E}[f(X)] \leq \mathbb{E}[f(Y)]$ for any convex $f \colon \RR^{n} \to \RR$, and we write $X \preceq_{\mathrm{cx}} Y$. Similarly, for probability measures $\mu, \nu$ on $\RR^{n}$, we say $\mu \preceq_{\mathrm{cx}} \nu$ whenever $X \preceq_{\mathrm{cx}} Y$ for $X \sim \mu$, $Y \sim \nu$.

Observe that by testing coordinate projections and their negatives, one can deduce that $X \preceq_{\mathrm{cx}} Y$ implies $\mathbb{E}[X] = \mathbb{E}[Y]$. The following result of van Handel shows that there is a universal constant $c > 0$ so that any centered, $c$-subgaussian random vector is dominated in convex order by a standard Gaussian random vector.

\begin{thm}[\cite{Van25}]\label{subgaussian thm}
    There exists a universal constant $c>0$ such that for any centered $1$-subgaussian random vector $X$ in $\R^n$, $cX$  is dominated in the convex order by $G$, where $G\sim \mathcal{N}(0,I_n)$ is a standard Gaussian vector.
\end{thm}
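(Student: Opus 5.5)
We will prove this through the dual description of the convex order and Talagrand's generic chaining. The stated inequality is exactly
\[
\mathbb{E}[f(cX)] \le \mathbb{E}[f(G)]
\]
for every convex $f\colon \R^n \to \R$.

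\textbf{Reductions.} First, by monotone convergence and the fact that every convex function is a countable supremum of affine minorants, it suffices to treat
\[
f(x)=\max_{t\in T}\bigl(\langle t,x\rangle+b_t\bigr),
\]
where $T\subset \R^n$ is finite and the $b_t$ are real shifts. For such $f$ the claim becomes a comparison of expected suprema of two processes indexed by $T$:
\[
\mathbb{E}\max_{t\in T}\bigl(\langle t,cX\rangle + b_t\bigr)\;\le\; \mathbb{E}\max_{t\in T}\bigl(\langle t,G\rangle+b_t\bigr).
\]
The process $t\mapsto \langle t,X\rangle$ is centered. Its increments satisfy
\[
\mathbb{P}\bigl[|\langle t-s,X\rangle|\ge u\bigr]\le 2e^{-u^2/(2|t-s|^2)},
\]
so it has subgaussian increments for the Euclidean metric $d(s,t)=|t-s|$. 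That metric is precisely the canonical metric of the Gaussian process $t\mapsto\langle t,G\rangle$.

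\textbf{Unshifted case.} When all $b_t=0$, the inequality is Talagrand's subgaussian comparison theorem. Generic chaining gives
\[
\mathbb{E}\max_t \langle t,X\rangle \le L\,\gamma_2(T,d),
\]
and the majorizing measure theorem gives
\[
\gamma_2(T,d)\le L'\,\mathbb{E}\max_t\langle t,G\rangle.
\]
Together these yield the claim with $c=1/(LL')$.

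\textbf{Main obstacle: the shifts $b_t$.} The real work is to extend the comparison to arbitrary shifts with a constant independent of the $b_t$. My first attempt is to run the chaining argument for the shifted process. I would organize $T$ by the level sets of $b$ and use a chaining functional that incorporates the shifts, namely admissible sequences together with a cost for the drop of $b$ along the chain. I would then prove matching upper and lower bounds: the upper bound for $cX$ from the subgaussian tail estimates, and the lower bound for $G$ via a shifted version of the majorizing measure theorem, for instance through Sudakov minoration combined with Gaussian concentration applied to the points where $b_t$ is near its maximum.

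If this becomes unwieldy, the fallback is a homogenization trick. I would replace $X$ by the vector $(X,\xi)\in\R^{n+1}$, with $\xi$ an independent standard Gaussian, and replace $G$ by $(G,\xi')$. I would then try to encode the affine part of $f$ through a linear functional of the extra coordinate after conditioning or rescaling, so as to reduce to the unshifted theorem in dimension $n+1$.

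\textbf{Final step.} Once the comparison holds for each finite $T$ with a universal $c$, monotone limits give $\mathbb{E} f(cX)\le \mathbb{E} f(G)$ for all convex $f$. Here the integrability of $f(cX)$ follows from the subgaussian tails of $X$. This proves $cX\preceq_{\mathrm{cx}} G$. The paper also points to an alternative derivation in \cite[Subsection 2.4.2]{Son26}, which may offer a cleaner route through this step.
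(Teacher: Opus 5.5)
\textbf{There is a genuine gap: the affine shifts $b_t$ are never handled.} Your reductions are fine. Testing against finite maxima of affine functions suffices by monotone convergence, and for $b_t\equiv 0$ the bound is Talagrand's comparison theorem (generic chaining plus the majorizing measure theorem). But that case covers only sublinear test functions, i.e.\ exactly the classical subgaussian comparison theorem. The whole additional content of the statement is domination for \emph{all} convex $f$ with a dimension-free constant, and that lives entirely in the shifts, which your proposal leaves as a plan.

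Your ``first attempt'' appeals to a shifted majorizing measure theorem that is neither an available result nor proved here. Layering $T$ by level sets of $b$ produces layers with arbitrary diameters and base points. You give no argument that a chaining upper bound for $\E\max_t(\langle t,cX\rangle+b_t)$ is matched, with a universal constant, by a lower bound for $\E\max_t(\langle t,G\rangle+b_t)$. Sudakov minoration and concentration near the maximizers of $b$ do not by themselves give such a two-sided characterization.

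\textbf{The homogenization fallback fails for a concrete reason.} Applying the unshifted theorem to $(X,\xi)$ in $\R^{n+1}$ yields only
\[
\E\max_t\bigl(\langle t,cX\rangle+c\,s_t\xi\bigr)\le\E\max_t\bigl(\langle t,G\rangle+s_t\xi'\bigr).
\]
This compares averages over \emph{random, centered} shifts, with different scalings on the two sides. You cannot condition on $\xi=1$, because the theorem controls only the joint expectation. Encoding a fixed shift would need a deterministic coordinate, which is not centered, so the theorem no longer applies.

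Nor can the averaged inequality be un-averaged. Take $T=\{0,\delta e_1\}$ with $b_0=0$ and $b_{\delta e_1}=-\beta$. Then $\E\max_t(\langle t,G\rangle+b_t)\le \delta/\sqrt{2\pi}$, while $\E\max_t(\langle t,G\rangle+b_t\xi')\ge \sqrt{\delta^2+\beta^2}/\sqrt{2\pi}$. So the right-hand side above can exceed the fixed-shift quantity by an arbitrary amount, and the inequality is too weak to recover $\E f(cX)\le\E f(G)$.

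\textbf{How the paper handles it.} This is why the paper does not derive the statement from the majorizing measure route. It quotes it from \cite{Van25}, where van Handel shows that J.~Liu's alternative approach to the comparison theorem \cite{Liu25} yields the full convex-order domination; see also \cite[Subsection 2.4.2]{Son26}. Pointing to these references at the end does not fill the gap, because that argument is exactly the missing idea.

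A minor point: integrability of $f(cX)$ does not follow from subgaussian tails when $f$ grows fast. It is also not needed, since $f$ is bounded below by an affine function, so $\E f(cX)$ is well defined in $(-\infty,\infty]$.
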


To give some context, the majorizing measure theorem of Talagrand \cite{Tal87} \cite[Theorem 2.10.1]{Tal21} implies the following subgaussian comparison theorem \cite[Theorem 2.10.11]{Tal21}: for any centered $1$-subgaussian random vector in $\R^n$, for some universal $c>0$, 
$\E[f(cX)]\leq \E[f(G)]$ where $G\sim \mathcal{N}(0,I_n)$ and $f$ is any $1$-homogeneous convex function. An alternative approach to this subgaussian comparison theorem due to J. Liu \cite{Liu25} was shown in \cite{Van25} to imply the stronger Theorem \ref{subgaussian thm}.  See also \cite[Subsection 2.4.2]{Son26} for an alternative explanation of Theorem \ref{subgaussian thm}.

\subsection{Lipschitz image lemma}
Here is a simple yet helpful lemma  observed in \cite[Lemma 2.4]{Son26}:
\begin{lem}[{\cite{Son26}}] \label{lem:average of gaussians}
Let $\Psi:\R^n\to \R^n$ be a Lipschitz map with Lipschitz constant at most $C_{\mathrm{Lip}}>0$, and let $G$ be a standard Gaussian random vector in $\R^n$.
Then there are two standard Gaussian random vectors $X,Y$ in $\R^n$ such that 
 $$\Psi(G)-\E[\Psi(G)]=C_{\mathrm{Lip}}\frac{X+Y}{2}.$$

\end{lem}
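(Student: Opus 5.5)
We may assume $C_{\mathrm{Lip}}=1$ by replacing $\Psi$ with $\Psi/C_{\mathrm{Lip}}$. The plan is to write $Z:=\Psi(G)-\E[\Psi(G)]$ as a stochastic integral against Brownian motion with integrand of operator norm at most $1$. We then split this integral symmetrically, using an auxiliary independent Brownian motion, into two pieces that are each Brownian motions at time $1$ by L\'evy's characterization.

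\emph{Step 1 (martingale representation).} Realize $G=B_1$ for a standard Brownian motion $(B_t)_{t\in[0,1]}$ in $\R^n$ with filtration $(\mathcal F_t)$. First assume $\Psi$ is smooth with $\|D\Psi\|_{op}\le 1$. The Clark--Ocone formula gives
$$Z=\int_0^1 H_t\,dB_t,\qquad H_t:=\E\big[D\Psi(B_1)\,\big|\,\mathcal F_t\big].$$
Equivalently, $H_t=(P_{1-t}D\Psi)(B_t)$, where $P_s$ is the heat semigroup, which follows from It\^o's formula applied to $P_{1-t}\Psi(B_t)$. Since $H_t$ is an average of matrices of operator norm at most $1$, we get $\|H_t\|_{op}\le 1$, i.e.\ $I_n-H_tH_t^{T}\succeq 0$.

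\emph{Step 2 (symmetric splitting).} Let $B'$ be an independent standard Brownian motion in $\R^n$, enlarging the probability space, and set $R_t:=(I_n-H_tH_t^T)^{1/2}$. This is a continuous function of $H_t$, hence progressively measurable and bounded. Define
$$X:=\int_0^1 H_t\,dB_t+\int_0^1 R_t\,dB'_t,\qquad Y:=\int_0^1 H_t\,dB_t-\int_0^1 R_t\,dB'_t .$$
The matrix-valued quadratic variation of each of the two martingales $t\mapsto \int_0^t H\,dB\pm\int_0^t R\,dB'$ is $\int_0^t (H_sH_s^T+R_sR_s^T)\,ds=t\,I_n$. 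This uses the independence of $B$ and $B'$, so there are no cross terms. By L\'evy's characterization both martingales are standard Brownian motions, so $X,Y\sim\mathcal N(0,I_n)$. By construction $X+Y=2Z$, which is the claim.

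\emph{Step 3 (removing smoothness).} For a general $1$-Lipschitz $\Psi$, take $\Psi_\eps:=\Psi*\rho_\eps$ with a mollifier $\rho_\eps$. Each $\Psi_\eps$ is smooth and $1$-Lipschitz, and $\Psi_\eps(G)-\E[\Psi_\eps(G)]\to Z$ in $L^2$. Each approximant has the form $(X_\eps+Y_\eps)/2$ with $X_\eps,Y_\eps$ standard Gaussian, so Lemma \ref{lem:closure} with $q=2$, applied after rescaling by $2$, passes this to the weak limit.

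I expect the only delicate points to be the justification of Clark--Ocone with the bound $\|H_t\|\le1$ and the measurability of $R_t$. Both are handled by the smooth-case formula $H_t=(P_{1-t}D\Psi)(B_t)$ together with the approximation in Step 3.
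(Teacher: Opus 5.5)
Your proof is correct, and it uses the same key representation as the paper: $\Psi(B_1)-\E[\Psi(B_1)]=\int_0^1 \nabla(P_{1-t}\Psi)(B_t)\,dB_t$, with an integrand of operator norm at most $1$. Where you differ is in how the integral is split into two Gaussians.

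\emph{The paper's route.} It approximates the stochastic integral in probability by sums $\sum_j A_j(G_{N,j})$, where each $A_j$ is a contraction depending on past increments. At each step it applies, conditionally on the past, the static fact that a contraction applied to a standard Gaussian is an average of two standard Gaussians. It then sums the pieces and uses Lemma~\ref{lem:closure} to let $N\to\infty$.

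\emph{Your route.} You split directly in continuous time, writing $H_t\,dB_t=\tfrac12[(H_t\,dB_t+R_t\,dB'_t)+(H_t\,dB_t-R_t\,dB'_t)]$ with $R_t=(I_n-H_tH_t^T)^{1/2}$, and you conclude by L\'evy's characterization.

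\emph{Comparison.} Your construction is exactly the continuum version of that static fact, which is itself proved by $A'G'=\tfrac12[(A'G'+(I_n-A'A'^{T})^{1/2}G'')+(A'G'-(I_n-A'A'^{T})^{1/2}G'')]$ with $G''$ independent. So the underlying idea is the same. What your version gains:
\begin{itemize}
\item an exact almost-sure identity $X+Y=2Z$ for smooth $\Psi$;
\item no Riemann-sum approximation;
\item no step-by-step conditional construction of the independent families $X_{N,j},Y_{N,j}$.
\end{itemize}
The price is invoking L\'evy's characterization on an enlarged filtration, in which $B$ stays a Brownian motion and $H_t$ stays adapted by independence of $B'$. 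You also need a separate mollification step, which is where your use of Lemma~\ref{lem:closure} sits. The paper avoids mollifying because $P_{1-t}\Psi$ is already smooth for $t<1$.
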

 \begin{proof}
 For the reader's convenience, we quickly reproduce the proof here.
It relies on  It\^{o}'s formula in stochastic calculus  \cite[Theorem 3.3]{Rev99}.
By rescaling $\Psi$ by a factor $\frac{1}{C_{\mathrm{Lip}}}$, it is enough to show the statement when $\Psi$ is 1-Lipschitz.
Let $\{B_t\}$ be the standard Brownian motion on $\R^n$ starting at $0$. Note that $\Psi(G)-\E[\Psi(G)]$ has same probability distribution as $\Psi(B_1)-\E[\Psi(B_1)].$
By It\^{o}'s formula  \cite[Theorem 3.3]{Rev99}, we have 
$$\Psi(B_1) -\E[\Psi(B_1)] = \int_0^1 \nabla(P_{1-t}\Psi)(B_t) dB_t$$
Here, $\{P_t\}_{t\geq 0}$ is the heat semigroup, and if $\Psi$ is the vector-valued map $(\Psi_1,...,\Psi_n)$, then $P_{1-t}\Psi$ denotes the vector-valued map $(P_{1-t}\Psi_1,...,P_{1-t}\Psi_n)$ and $\nabla(P_{1-t}\Psi)(x)$ is the linear map whose matrix has $i$-th row equal to $(\frac{\partial}{\partial x_1} P_{1-t}\Psi_i(x), ..., \frac{\partial}{\partial x_n} P_{1-t}\Psi_i(x))$.
Since $\Psi$ is 1-Lipschitz, $P_{1-t}\Psi$ is also 1-Lipschitz and so 
$\|\nabla(P_{1-t}\Psi)\| \leq 1.$
This means by  \cite[Proposition 2.13]{Rev99} that the random variable $\Psi(G)-\E[\Psi(G)]$ is the limit in probability of random variables of the following form:
$$\sum_{j=1}^N A_j(G_{N,j})$$
where $G_{N,j}\sim \mathcal{N}(0,\frac{1}{N}I_n)$ are independent and for each $j\geq 1$, $A_j:\mathbb{R}^n\to \mathbb{R}^n$ is a certain random linear operator with operator norm at most 1 depending only on $G_{N,1},...,G_{N,j-1}$.
It is an exercise (see \cite[Lemma 2.1]{Son26}) to check that if $A':\mathbb{R}^n\to \mathbb{R}^n$ is linear with operator norm at most $1$ and $G'\sim \mathcal{N}(0,I_n)$ then $A'(G')$ is the average of two non-independent standard Gaussian vectors. Thus, for each $j=1,...,N$, conditioned on $A_j$ (which only depends on $G_{N,k}$ where $k=1,...,j-1$),
$$A_j(G_{N,j}) = \frac{X_{N,j}+Y_{N,j}}{2}$$
where $X_{N,j}, Y_{N,j} \sim \mathcal{N}(0,\frac{1}{N}I_n)$. 
Since the latter holds no matter what the values of $G_{N,k}$ are for $k=1,...,j-1$, this gives well-defined random vectors $X_{N,1},...,X_{N,N}\sim \mathcal{N}(0,\frac{1}{N}I_n)$ which are independent, and similarly random vectors $Y_{N,1},...,Y_{N,N}\sim \mathcal{N}(0,\frac{1}{N}I_n)$ which are independent.
Summing in $j$, we get 
$$\sum_{j=1}^N A_j(G_{N,j}) = \frac{X_N+Y_N}{2}$$
where $X_N:=\sum_{j=1}^N X_{N,j}$ and $Y_N:=\sum_{j=1}^N Y_{N,j}$
satisfy $X_N,Y_N \sim \mathcal{N}(0,I_n)$.
By Lemma \ref{lem:closure}, since $\Psi(G)-\E[\Psi(G)]$ is the limit in distribution of $\sum_{j=1}^N A_j(G_{N,j})$, $\Psi(G)-\E[\Psi(G)]\sim \frac{X+Y}{2}$
where $X,Y \sim \mathcal{N}(0,I_n)$.

 \end{proof}

Let us recall Caffarelli's contraction theorem \cite[Theorem 11]{Caf00}, which states that any probability measure on $\mathbb{R}^n$ which is more log-concave than $\gamma_n$ is the pushforward of $\gamma_n$ by a distance non-increasing map (see also \cite{Kol11} for a survey):
\begin{thm}[\cite{Caf00}]\label{caf}
Let $\mu$ be a probability measure on $\R^n$ which is $1$-uniformly log-concave. Then there is a $1$-Lipschitz map $F:\R^n\to \R^n$ such that $F_*(\gamma_n) = \mu$.
\end{thm}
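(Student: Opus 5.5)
Since Theorem \ref{caf} is Caffarelli's result, I would follow his maximum-principle argument for the Brenier map. The plan has three steps: reduce to smooth, non-degenerate data; prove the bound $D^2\varphi\le \Id$ for the Brenier potential; pass to the limit.

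\textbf{Step 1: reduction to nice $\mu$.} Write $\mu=e^{-V}dx$ with $V=\frac{|x|^2}{2}+W$ and $W$ convex, possibly taking the value $+\infty$. I would approximate $W$ by finite smooth convex functions $W_k$ (for instance a Moreau--Yosida regularization followed by mollification), and add $\frac1k|x|^2$ so that $V_k$ has uniformly bounded-below growth at infinity. The normalized measures $\mu_k=e^{-V_k}/Z_k$ are still $1$-uniformly log-concave, have smooth positive densities on $\R^n$, and converge weakly to $\mu$.

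The limit is harmless. Suppose $F_k$ are $1$-Lipschitz with $(F_k)_*\gamma_n=\mu_k$. Tightness of $\{\mu_k\}$ forces $\{F_k(0)\}$ to be bounded. Arzelà--Ascoli then gives a subsequence converging locally uniformly to a $1$-Lipschitz $F$. By dominated convergence, $F_*\gamma_n=\lim (F_k)_*\gamma_n=\mu$.

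\textbf{Step 2: the Monge--Ampère equation.} For nice $\mu$, let $F=\nabla\varphi$ be the Brenier map from $\gamma_n$ to $\mu$, with $\varphi$ convex. By Caffarelli's regularity theory for smooth positive densities, I would take $\varphi$ smooth and strictly convex. It satisfies
$$\log\det D^2\varphi(x) = -\tfrac{|x|^2}{2}+V(\nabla\varphi(x)) + c .$$
Fix a unit vector $e$ and differentiate twice in direction $e$. Writing $\varphi^{ij}$ for the inverse Hessian and using the concavity of $\log\det$, I get
$$\varphi^{ij}\varphi_{eeij}\;\ge\;\varphi^{ij}\varphi_{eeij}-\varphi^{ik}\varphi^{jl}\varphi_{eij}\varphi_{ekl}\;=\;-1+V_{kl}(\nabla\varphi)\varphi_{ke}\varphi_{le}+V_k(\nabla\varphi)\,\partial_k\varphi_{ee}.$$
Now suppose $(x_0,e)$ maximizes $\varphi_{ee}$ over all $x$ and unit $e$. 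Then $\nabla\varphi_{ee}(x_0)=0$ and $\varphi^{ij}\varphi_{eeij}(x_0)\le0$, since $\varphi^{ij}$ is positive definite and $D^2\varphi_{ee}\le0$ there. Moreover $e$ is a top eigenvector of $D^2\varphi(x_0)$, so $\varphi_{ke}=\varphi_{ee}e_k$. Together with $D^2V\ge \Id$ this gives $0\ge -1+\varphi_{ee}^2$, hence $\varphi_{ee}\le1$ everywhere. So $D^2\varphi\le \Id$, and $\nabla\varphi$ is $1$-Lipschitz.

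\textbf{Main obstacle: attaining the maximum.} On $\R^n$ the supremum of $\varphi_{ee}$ need not be attained. Caffarelli's device, which I would try first, is to work instead with the second difference quotients
$$\delta_h\varphi(x)=\frac{\varphi(x+he)+\varphi(x-he)-2\varphi(x)}{h^2}.$$
Run the same argument on these at a maximum point, using concavity of $\log\det$ and convexity of $V$ in place of pointwise differentiation. Attainment can then be forced in one of two ways:
\begin{itemize}
\item Use a localized perturbation such as $\delta_h\varphi-\eta|x|^2$ and let $\eta\to0$ afterwards.
\item Show that $\limsup_{|x|\to\infty}\delta_h\varphi\le 1+o(1)$, using the quadratic growth of $V_k$ guaranteed by the regularization in Step 1.
\end{itemize}

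If the boundary analysis becomes too delicate, my fallback is the Kim--Milman heat-flow construction. There the contraction property follows from a log-concavity bound $-D^2\log(P_t\text{-density})\ge(\dots)\Id$ along the flow, which is a softer estimate. Either route, combined with the limiting argument of Step 1, yields the theorem.
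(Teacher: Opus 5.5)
The paper does not prove Theorem~\ref{caf}; it quotes it from \cite{Caf00} as a black box, so your proposal has to stand on its own. Step~1 is correct: the regularization works, tightness keeps $F_k(0)$ bounded, and Arzel\`a--Ascoli passes to the limit. The formal computation in Step~2 is also correct. But the argument stops exactly where Caffarelli's theorem has its content. You never produce a point where $\varphi_{ee}$ or $\delta_h\varphi$ attains its global maximum, and neither suggested fix provides one.

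\emph{The perturbation $\delta_h\varphi-\eta|x|^2$.} You first need an a priori growth bound on $\delta_h\varphi$ just to have a maximizer. Even then, in the infinitesimal version you only know $\nabla\varphi_{ee}(x_0)=2\eta x_0$ and $D^2\varphi_{ee}(x_0)\le 2\eta\Id$. Your inequality therefore picks up the error terms $2\eta\,\mathrm{tr}\big((D^2\varphi(x_0))^{-1}\big)$ and $2\eta\langle\nabla V(\nabla\varphi(x_0)),x_0\rangle$. Nothing bounds $D^2\varphi$ from below, $\nabla V\circ\nabla\varphi$ is unbounded for a full-support target, and $x_0=x_0(\eta)$ may escape to infinity. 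So there is no reason for these errors to vanish as $\eta\to0$.

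\emph{The asymptotic bound.} Showing $\limsup_{|x|\to\infty}\delta_h\varphi\le 1+o(1)$ is an asymptotic form of the theorem itself, and quadratic growth of $V_k$ does not give it. The Kim--Milman fallback is a different proof that you only name.

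\emph{The constant.} The difference-quotient version also has a quantitative problem. At a maximizer $x_0$ of $\delta_h\varphi$, write $y_\pm=\nabla\varphi(x_0\pm he)$ and $y_0=\nabla\varphi(x_0)$. Concavity of $\log\det$ and $D^2V\ge\Id$ give $y_++y_-=2y_0$ and $|y_+-y_-|\le 2h$. Convexity then only yields $\delta_h\varphi(x_0)\le h^{-1}\langle e,y_+-y_-\rangle\le 2$, i.e.\ a $2$-Lipschitz map. This estimate is sharp for convex functions with a kink, so an extra idea is needed to reach the constant $1$.

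The usual way around both issues is to regularize in the opposite direction from yours. Truncate $\mu$ to large balls $B_R$; this is still $1$-uniformly log-concave in the paper's sense, with $V=+\infty$ outside. Then $\nabla\varphi$ takes values in $B_R$, and $\varphi$ is still smooth on $\R^n$ because the target is convex. Monotonicity of $\nabla\varphi$, together with density of its image in $B_R$, forces $\nabla\varphi(x)-Rx/|x|\to0$ as $|x|\to\infty$. Hence $\delta_h\varphi\to0$ at infinity for each fixed $h$, and since $\delta_h\varphi>0$, its supremum is attained. The first pass then gives $\Lambda:=\sup_{x,e}\varphi_{ee}\le 2<\infty$.

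To get the sharp constant, use the Legendre identity at the maximizer, which holds because $y_++y_-=2y_0$: $h^2\delta_h\varphi(x_0)=h\langle e,y_+-y_-\rangle-\big[\varphi^*(y_+)+\varphi^*(y_-)-2\varphi^*(y_0)\big]$. Combine it with $D^2\varphi^*\ge\Lambda^{-1}\Id$ and $|y_+-y_-|\le 2h$. If $\Lambda\ge1$, this gives $\sup_x\delta_h\varphi\le 2-\Lambda^{-1}$ for every $h$. Hence $\Lambda\le 2-\Lambda^{-1}$, which forces $\Lambda\le 1$. Your Step~1 limiting argument then removes the truncation.
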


Note that if a random vector $W$ is $1$-uniformly log-concave, then the same holds for $\frac{W}{2}$. Thus, combining the two previous results, we obtain:
\begin{cor} \label{cor:quick app}
If a random vector $W$ in $\R^n$ is $1$-uniformly log-concave, then $$W - \E[W]= G_1+G_2$$ for some standard Gaussian vectors $G_1,G_2 \sim \mathcal{N}(0,I_n)$. 
\end{cor}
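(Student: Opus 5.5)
The plan is to chain Theorem \ref{caf} with Lemma \ref{lem:average of gaussians}, using the scaling remark just before the statement. Since $W$ is $1$-uniformly log-concave, write its density as $e^{-V}$ with $\Hess V\geq I_n$. Then $W/2$ has density proportional to $e^{-V(2x)}$. The potential $x\mapsto V(2x)$ has Hessian $4\Hess V(2x)\geq 4I_n\geq I_n$, so $W/2$ is again $1$-uniformly log-concave. (On the boundary of the support we only use convexity, which scaling preserves.)

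Next apply Theorem \ref{caf} to the law $\mu$ of $W/2$. This gives a $1$-Lipschitz map $F:\R^n\to\R^n$ with $F_*\gamma_n=\mu$. Set $\Psi:=2F$, which is $2$-Lipschitz. Then $\Psi(G)$ has the law of $W$ when $G\sim\mathcal N(0,I_n)$.

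Now apply Lemma \ref{lem:average of gaussians} with $C_{\mathrm{Lip}}=2$. It produces standard Gaussian vectors $G_1,G_2$ with
$$\Psi(G)-\E[\Psi(G)] = 2\cdot\frac{G_1+G_2}{2}=G_1+G_2.$$
Since $\Psi(G)$ and $W$ have the same law, $\E[\Psi(G)]=\E[W]$, and $W-\E[W]$ has the same distribution as $G_1+G_2$. This is exactly the statement, in the distributional sense fixed in the notation section.

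The only point needing care is integrability. A $1$-uniformly log-concave measure has Gaussian tails, so $\E[W]$ is finite, and a Lipschitz image of a Gaussian is integrable in any case. Apart from this and the scaling check, the argument is a direct composition of the two cited results; the substantive work lies in those theorems, which we take as given.
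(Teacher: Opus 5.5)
Your proposal is correct and follows the paper's own argument. The paper also rescales to $W/2$ (still $1$-uniformly log-concave), applies Caffarelli's contraction theorem to get a $1$-Lipschitz transport map, and then invokes Lemma \ref{lem:average of gaussians}; your choice $\Psi=2F$ with $C_{\mathrm{Lip}}=2$ is just a repackaging of applying the lemma to $F$ and multiplying by $2$.
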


This corollary can alternatively be deduced by the more intricate arguments of \cite{Eld16}, which are written for dimension $1$ but should generalize to all dimensions.

\section{Gaussian convex domination implies sum of standard Gaussians}

In this section, we explain how to show Theorem \ref{thm:main} up to Proposition \ref{log-concave}, whose proof will be treated in Section \ref{entropy}. To prove Theorem \ref{thm:main}, it suffices to consider the case when the random vector $X$ takes finitely many values due to Lemma \ref{lem:closure} and the following:

\begin{lem}\label{lem:discrete approx}
Let $G\sim \mathcal{N}(0,I_n)$. 
Any random vector $X$ in $\R^n$ such that $X \preceq_{\mathrm{cx}} G$ is the weak limit of a sequence of finitely supported random vectors $X_k$ in $\R^n$ satisfying $X_k\preceq_{\text{cx}}G$.
\end{lem}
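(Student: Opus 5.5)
The plan is to discretize $X$ by conditional expectations on a finite partition, which automatically preserves domination by $G$ in convex order. By Strassen's theorem, $X \preceq_{\mathrm{cx}} G$ gives a martingale coupling $(X,G)$ on a common probability space with $\E[G\mid X]=X$. The key observation is that conditional expectation is a contraction in convex order. If $\mathcal{F}$ is any sub-$\sigma$-algebra and $Z:=\E[X\mid \mathcal{F}]$, then for every convex $f$ Jensen's inequality gives
$$\E[f(Z)] \le \E[f(X)] \le \E[f(G)].$$
So $Z\preceq_{\mathrm{cx}} G$. Strassen is not even needed here: the first inequality uses only conditional Jensen, and the second is the hypothesis. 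It therefore suffices to choose $\mathcal{F}_k$ so that $X_k:=\E[X\mid \mathcal{F}_k]$ is finitely supported and $X_k\to X$ weakly.

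For the construction, note first that $X$ is integrable, since $|x|$ is convex and so $\E|X|\le \E|G|<\infty$. For each $k$, partition $\R^n$ into finitely many Borel cells. Take the cubes of side $2^{-k}$ contained in $[-k,k]^n$, and lump the complement $\R^n\setminus[-k,k]^n$ into one cell. Let $\mathcal{F}_k$ be the finite $\sigma$-algebra generated by the events $\{X\in C\}$ for the cells $C$. Then $X_k$ takes at most finitely many values, namely $\E[X\mid X\in C]$ for cells of positive probability.

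To show $X_k\to X$, estimate the $L^1$ distance cell by cell. On a bounded cube $C$, both $X$ and $X_k$ lie in the closed convex set $\overline{C}$, because the conditional mean of a vector supported in $\overline C$ lies in $\overline C$. Hence $|X-X_k|\le \sqrt{n}\,2^{-k}$ there. On the outer cell $O_k=\{X\notin[-k,k]^n\}$ we have
$$\E\big[|X-X_k|\mathbf 1_{O_k}\big]\le \E\big[|X|\mathbf 1_{O_k}\big]+\big|\E[X\mathbf 1_{O_k}]\big|\le 2\,\E\big[|X|\mathbf 1_{O_k}\big].$$
This tends to $0$ by dominated convergence, since $X$ is integrable. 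So $X_k\to X$ in $L^1$, hence in probability, hence weakly.

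The only delicate point is the outer cell, where $X_k$ is not close to $X$ pointwise. There integrability of $X$, which follows from convex domination itself, is exactly what controls the error. Everything else is routine.
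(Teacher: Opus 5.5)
Your proof is correct and follows the paper's route: set $X_k=\E[X\mid\mathcal{F}_k]$ for a finite partition and use conditional Jensen to get $\E[f(X_k)]\le\E[f(X)]\le\E[f(G)]$. The only difference is in the convergence step: the paper takes a refining sequence of partitions generating the Borel $\sigma$-algebra and asserts $X_k\to X$ using $\E|X|<\infty$ (implicitly martingale convergence), while your explicit cube partition, with the $L^1$ estimate on bounded cells and on the outer cell, proves that convergence directly.
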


\begin{proof} Consider a series of disjoint partitions $\mathcal{P}_k$ of $\R^n$ each consisting of finitely many Borel measurable sets such that $\mathcal{P}_{k+1}$ is finer than $\mathcal{P}_k$ and $\sigma\left(\cup_{k\ge 1}\mathcal{P}_k \right )$ contains all the Borel sets in $\R^n$. Define $ \mathcal{F}_k=X^{-1}(\sigma(\mathcal{P}_k))$ and the random variable
$X_k=\E[X|\mathcal{F}_k]$. Observe that $X_k$ converges weakly to $X$ (here we use that $X$ has finite first moments since it is dominated in convex order by a Gaussian vector), takes on finitely many values and  is dominated in the convex order by $G$. To see the latter, let $f:\R^n\rightarrow \R$ be a convex function. Then 
$$f(X_k)=f(\E[X|\mathcal{F}_k])\le \E[f(X)|\mathcal{F}_k]$$ 
and by taking the total expectation we get $\E[f(X_k)]\le \E[f(X)]\le \E[f(G)]$.
\end{proof}

Next, recall that a classical theorem of Strassen gives an equivalent characterization of convex domination:
\begin{thm}[\cite{strassen}]
   The probability measure $\mu$ is dominated in the convex order by $\nu$ if and only if there exists a martingale coupling $(X,Y)$ of $\mu$ and $\nu$, i.e. there exist $X\sim \mu$ and $Y\sim \nu$ such that $\E[Y|X]=X$. 
\end{thm}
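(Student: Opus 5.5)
The easy direction comes first. If $(X,Y)$ is a martingale coupling, so that $\E[Y\mid X]=X$, take any convex $f\colon\R^n\to\R$. The conditional Jensen inequality gives $f(X)=f(\E[Y\mid X])\le \E[f(Y)\mid X]$. Taking expectations yields $\E[f(X)]\le \E[f(Y)]$, hence $\mu\preceq_{\mathrm{cx}}\nu$. Integrability is not an issue: we may restrict to measures with finite first moment, which is implicit in the statement (this is the setting used in the paper, where $\nu$ is Gaussian). A convex function is bounded below by an affine function, so $\E[f(Y)]$ is well defined in $(-\infty,\infty]$.

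For the converse, I would first treat finitely supported $\mu$ and then approximate, in the spirit of Lemma \ref{lem:discrete approx}. Write $\mu=\sum_{i=1}^m p_i\delta_{x_i}$. A martingale coupling is then a decomposition $\nu=\sum_i p_i\nu_i$ into probability measures $\nu_i$ with barycenter $x_i$. To find one, I would use a Hahn--Banach / minimax argument.

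Consider the convex, weakly compact set of vectors $(p_1\nu_1,\dots,p_m\nu_m)$ with $\sum_i p_i\nu_i=\nu$; compactness holds because each component is dominated by $\nu$. Suppose no element has all barycenters equal to the $x_i$. Then separation in the finite-dimensional space of barycenter vectors $(b_1,\dots,b_m)\in(\R^n)^m$ produces vectors $a_1,\dots,a_m$ with
$$\sum_i p_i\langle a_i,x_i\rangle>\sup\sum_i p_i\langle a_i,b_i\rangle .$$
The supremum on the right equals $\int \max_i\langle a_i,y\rangle\,d\nu(y)$, obtained by assigning each $y$ to a maximizing index $i$. Now set $f(y)=\max_i\langle a_i,y\rangle$, which is convex. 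We have
$$\int f\,d\mu=\sum_i p_i f(x_i)\ge \sum_i p_i\langle a_i,x_i\rangle>\int f\,d\nu,$$
which contradicts $\mu\preceq_{\mathrm{cx}}\nu$. This argument needs care in two places: the supremum must be attained, and separation must be justified. The first is handled by the weak compactness above. For the second, note that the image of the compact convex set under the continuous affine barycenter map is a compact convex subset of $(\R^n)^m$. The map is continuous because $\nu$ has a finite first moment, so uniform integrability applies.

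For general $\mu\preceq_{\mathrm{cx}}\nu$, choose finitely supported $\mu_k=\mathrm{Law}(\E[X\mid\mathcal F_k])$ as in Lemma \ref{lem:discrete approx}. These satisfy $\mu_k\preceq_{\mathrm{cx}}\mu\preceq_{\mathrm{cx}}\nu$ and $\mu_k\to\mu$ weakly, with uniformly integrable first moments. Take martingale couplings $\pi_k$ of $(\mu_k,\nu)$. The family $\{\pi_k\}$ is tight since its marginals are tight, so Prokhorov gives a weak limit $\pi$ with marginals $\mu$ and $\nu$. It remains to pass the martingale property $\int \langle y-x,\ g(x)\rangle\,d\pi_k=0$, for bounded continuous $g$, to the limit. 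This follows because $y-x$ is uniformly integrable under the $\pi_k$: both marginal families have uniformly integrable first moments.

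I expect the main obstacle to be the finite-case separation step, in particular the exact identification of the supremum with $\int\max_i\langle a_i,y\rangle\,d\nu$ and the compactness needed to apply separation cleanly.
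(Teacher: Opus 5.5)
The paper does not prove this statement; it is quoted from \cite{strassen}, so your proposal has to stand on its own. Your easy direction, the compactness of the set of decompositions, and the approximation and limiting argument for general $\mu$ are fine. The finite-support step, however, has a genuine gap, and it is exactly where you suspected. The identity
\[
\sup\Bigl\{\textstyle\sum_i p_i\langle a_i,b_i\rangle\Bigr\}=\int\max_i\langle a_i,y\rangle\,d\nu(y)
\]
is false. The supremum runs over decompositions $\nu=\sum_ip_i\nu_i$ with each $\nu_i$ a probability measure, so the masses $p_i$ are prescribed. Assigning each $y$ to a maximizing index produces pieces whose masses are in general not the $p_i$. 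You therefore only get $\le$, which is the wrong direction for your contradiction.

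This cannot be patched locally. Your argument only ever tests $\mu\preceq_{\mathrm{cx}}\nu$ against $1$-homogeneous convex functions $\max_i\langle a_i,\cdot\rangle$, and domination against those does not yield a martingale coupling. (Compare the paper's distinction between Talagrand's comparison theorem for $1$-homogeneous $f$ and van Handel's convex-order statement.) For example, in $\R$ let $\nu=\frac12\delta_{-1}+\frac12\delta_1$ and $\mu=\frac14\delta_{-2}+\frac14\delta_0+\frac12\delta_1$. Then $\int y^+d\mu=\int y^-d\mu=\int y^+d\nu=\int y^-d\nu=\frac12$, so every $1$-homogeneous convex function has the same integral under $\mu$ and $\nu$. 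Yet $\mu$ charges $-2\notin[-1,1]$, so no martingale coupling exists. Concretely, with $a=(-1,0,0)$ your separation inequality reads $\frac12>\frac14$, while $\int\max_ia_iy\,d\nu=\frac12=\sum_ip_i\max_ja_jx_i$, so no contradiction results.

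The missing idea is to free the masses and dualize them with constants $U_i$. Take all decompositions $\nu=\sum_i\rho_i$ into nonnegative measures, with no mass constraint, and consider the map $\rho\mapsto\bigl(\rho_i(\R^n),\int y\,d\rho_i\bigr)_{i\le m}\in(\R\times\R^n)^m$. Its image is compact and convex by your argument: the densities $d\rho_i/d\nu$ lie in $[0,1]$ and sum to $1$, and $\nu$ has a finite first moment. Moreover, $(p_i,p_ix_i)_{i\le m}$ lies in the image exactly when a martingale coupling exists, via $\nu_i=\rho_i/p_i$. If it does not, separation gives $(U_i,a_i)\in\R\times\R^n$ with
\[
\sum_ip_i\bigl(U_i+\langle a_i,x_i\rangle\bigr)>\sup_\rho\sum_i\int\bigl(U_i+\langle a_i,y\rangle\bigr)\,d\rho_i(y)=\int f\,d\nu,\qquad f:=\max_i\bigl(U_i+\langle a_i,\cdot\rangle\bigr).
\]
Now the pointwise assignment is legitimate: take $\rho_i=\nu|_{A_i}$, where $A_i$ is the set on which $i$ is the smallest maximizing index. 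Since $\int f\,d\mu\ge\sum_ip_i(U_i+\langle a_i,x_i\rangle)$, this contradicts $\mu\preceq_{\mathrm{cx}}\nu$.

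These maxima of affine functions are exactly the test functions behind the paper's $g$ in (\ref{def of g}) and Proposition \ref{prop: conversedomination}, where the $U_i$ are the multipliers enforcing $\E[f_i(Y)]=1$. With this correction, the rest of your proof goes through.
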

Strassen's theorem is an abstract existence result: if a probability distribution is dominated in the convex order by another, then one can find a martingale coupling. We will prove in the next section the following stronger structural result. 

\begin{prop} \label{log-concave}
    Let $\mu=\sum\limits_{i=1}^{m} p_i \delta_{x_i}$ be a finitely supported centered probability measure on $\R^n$, and let $\nu$ be a centered  probability measure  on $\R^n$ absolutely continuous with respect to the Lebesgue measure, and with finite first moments. Assume that $\mu\preceq_{\text{cx}}\nu$.
    Then there exists a natural bijection between the set of martingale couplings $(X,Y)$ of $\mu$ and $\nu$, and the collection $\mathcal{C}$ of tuples of nonnegative functions $(f_1,\ldots, f_m)$ (defined up to $\nu$-null sets) where the $f_{i} \colon \RR^{n} \to \RR$ satisfy $$ \sum\limits_{i=1}^{m}p_if_i(x)=1,\ \E\left[f_i(Y)\right]=1,\ \E \left[Yf_i(Y)\right]=x_i.$$ 

 Moreover, we have $X\preceq_{\text{cx}}(1-\varepsilon) Y$ for some $\varepsilon>0$ if and only if there exist $(f_1^*,\ldots, f_m^*)$ in $\mathcal{C}$ such that each $f_i^*$ is of the form
 $$f_i^*(x)=\dfrac{e^{U_i+\langle V_i,x\rangle}}{\sum\limits_{j=1}^{m} p_j e^{U_j+\langle V_j,x\rangle}}$$
 for some $U_i\in \R,\ V_i\in \R^n$. Note that each $f_i^*$ is log-concave. 
\end{prop}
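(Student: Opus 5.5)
The bijection is the easy part, and I would construct it explicitly. Given a martingale coupling $(X,Y)$ of $\mu$ and $\nu$, set
$$f_i(y):=\frac{\P[X=x_i\mid Y=y]}{p_i},$$
which is defined $\nu$-a.e. Then $p_if_i\,d\nu$ is the joint law of $(X,Y)$ on the atom $\{X=x_i\}$, and $f_i$ is the density of $\mathrm{Law}(Y\mid X=x_i)$ with respect to $\nu$. The three constraints then read as follows:
\begin{itemize}
\item $\sum_i p_if_i=1$ says that the $Y$-marginal is $\nu$;
\item $\E[f_i(Y)]=1$ says that $\P[X=x_i]=p_i$;
\item $\E[Yf_i(Y)]=x_i$ is exactly $\E[Y\mid X=x_i]=x_i$.
\end{itemize}
Conversely, a tuple in $\mathcal C$ defines the measure $\sum_i p_i\,\delta_{x_i}\otimes f_i\,\nu$, which is a martingale coupling. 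The two maps are mutually inverse up to $\nu$-null sets.

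For the ``moreover'' part I would set up an entropy problem and its convex dual. The primal problem is to minimize the relative entropy of the coupling with respect to $\mu\otimes\nu$, namely $\sum_i p_i\E[f_i\log f_i(Y)]$, over $\mathcal C$. Attaching multipliers $U_i,V_i$ to the constraints on $f_i$ and a function multiplier to $\sum_ip_if_i=1$, formally optimizing in $f$ gives $\log f_i=U_i+\langle V_i,y\rangle-\lambda(y)$. The normalization then forces exactly the stated form $f_i^*$. The rigorous route is to work directly with the concave dual function
$$\Phi(U,V)=\sum_i p_i\bigl(U_i+\langle V_i,x_i\rangle\bigr)-\E\Bigl[\log\sum_j p_je^{U_j+\langle V_j,Y\rangle}\Bigr].$$
A direct computation shows that $\partial_{U_i}\Phi=0$ and $\nabla_{V_i}\Phi=0$ are precisely $\E[f_i^*(Y)]=1$ and $\E[Yf_i^*(Y)]=x_i$. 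Moreover $\sum_i p_if_i^*=1$ holds automatically. So $f^*\in\mathcal C$ if and only if $(U,V)$ is a critical point, i.e. a maximizer of $\Phi$. Note that $\Phi$ is invariant under adding a common affine function to all $U_i+\langle V_i,\cdot\rangle$, so we quotient by this. We also need $\E|Y|<\infty$ for $\Phi$ to be finite.

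Existence of a maximizer reduces to coercivity of $-\Phi$ on the quotient, and this is where strict domination enters. The recession function of $-\Phi$ in a direction $(a,b)$ is
$$R(a,b)=\E\bigl[h(Y)\bigr]-\sum_ip_ih(x_i),\qquad h(y):=\max_j\bigl(a_j+\langle b_j,y\rangle\bigr),$$
up to the harmless replacement of $h(x_i)$ by $a_i+\langle b_i,x_i\rangle\le h(x_i)$. By the martingale property, $R\ge0$. On a compact set of normalized directions we have the following.
\begin{itemize}
\item If $X\preceq_{\mathrm{cx}}(1-\varepsilon)Y$, then $\E h(Y)-\E h((1-\varepsilon)Y)$ is bounded below by a multiple of $\varepsilon$. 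Here I use the $1$-homogeneous part of $h$ and $\E|Y|<\infty$, together with the centering of $\nu$ to handle the constants. This gives $R>0$ uniformly, so $\Phi$ attains its maximum.
\item Conversely, suppose $f^*$ of exponential form lies in $\mathcal C$. Then each conditional law $\mathrm{Law}(Y\mid X=x_i)$ has positive density against $\nu$ on all of $\mathrm{supp}\,\nu$. Jensen's inequality $\E[h(Y)\mid X=x_i]\ge h(x_i)$ can therefore be an equality only if $h$ is affine on the convex hull of $\mathrm{supp}\,\nu$, which has nonempty interior because $\nu$ is absolutely continuous. Hence $\E h(Y)-\sum_ip_ih(x_i)>0$ for every non-affine max-affine $h$.
\end{itemize}
Since $\E f(X)$ only sees $f(x_1),\dots,f(x_m)$, and $f$ dominates the max of its supporting affine functions at the $x_i$, it suffices to test convex order against max-affine $h$ with at most $m$ pieces. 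Compactness of normalized directions gives a uniform gap $\delta>0$. Lipschitz continuity of normalized $h$ then yields $\E h((1-\varepsilon)Y)\ge\E h(Y)-C\varepsilon\,\E|Y|\ge\sum_ip_ih(x_i)$ for small $\varepsilon$.

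I expect the main obstacle to be the compactness/normalization step. One must quotient correctly by affine functions, so that affine directions, where $R=0$ legitimately, are excluded. One must also make the uniform gap and the $\varepsilon$-perturbation estimate rigorous when $\mathrm{supp}\,\nu$ is not all of $\R^n$. The first-order computation identifying critical points of $\Phi$ with elements of $\mathcal C$ is routine.
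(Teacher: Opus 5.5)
\textbf{There is a genuine gap in the coercivity step.} The bijection and the dual function $\Phi$ (the paper's $g$) match the paper. The problem is the coercivity step under slack, which is the one place the hypothesis is used.

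Replacing $a_i+\langle b_i,x_i\rangle$ by $h(x_i)$ is not harmless. Quotienting by a common affine shift removes only the directions in which all pieces $\ell_j:=a_j+\langle b_j,\cdot\rangle$ coincide. It does not remove directions in which $h=\max_j\ell_j$ is affine on $\operatorname{supp}\nu$ while the $\ell_j$ differ, e.g.\ when one piece dominates the others. Take $m=2$, $p_1=p_2=\frac12$ and $(a,b)=((1,-1),(0,0))\in\mathcal S\setminus\{0\}$. Then $h\equiv1$, so both $\E h(Y)-\sum_ip_ih(x_i)$ and $\E h(Y)-\E h((1-\varepsilon)Y)$ vanish, and your chain gives only $R\ge0$. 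Yet $\Phi(t(a,b))=-\log\cosh t$, so in fact $R=1$. Hence your claimed lower bound by a multiple of $\varepsilon$ on normalized directions is false. Contrary to your closing remark, $R$ is positive in such directions, but only through the term you discarded, $\sum_ip_i\bigl(h(x_i)-\ell_i(x_i)\bigr)$.

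\textbf{Repairing the forward direction.} Keep that term. Suppose $R(a,b)=0$. Then $\E h(Y)=\sum_ip_ih(x_i)\le\E h((1-\varepsilon)Y)\le(1-\varepsilon)\E h(Y)+\varepsilon h(0)$, and with Jensen this forces $h$ to equal an affine $L$ on $C:=\overline{\operatorname{conv}}\operatorname{supp}\nu$. The vanishing of the discarded term then gives $\ell_i(x_i)=h(x_i)=L(x_i)$. Since $0\in\operatorname{int}C$ and $x_i\in(1-\varepsilon)C\subset\operatorname{int}C$, the affine function $L-\ell_i\ge0$ on $C$ vanishes at an interior point. So every $\ell_i=L$, which is a gauge direction.

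The paper sidesteps this case analysis. It turns the slack into a martingale coupling of $X$ with $Y$ itself whose densities satisfy $f_i(x)=(1-\varepsilon)\tilde f_i((1-\varepsilon)x)+\varepsilon\ge\varepsilon$. Since $R=\sum_ip_i\E[f_i(Y)(h(Y)-\ell_i(Y))]$ for any coupling densities, this yields $R\ge\varepsilon\,\E[\max_j\ell_j(Y)]>0$ on $\mathcal S\setminus\{0\}$.

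\textbf{The converse has the same flaw.} There is no uniform gap over all max-affine $h$ modulo gauge: the non-gauge choice $\ell_1=0$, $\ell_2=\cdots=\ell_m=-1$ gives $h\equiv0$ and a zero gap. Your argument becomes correct if you restrict, as your reduction permits, to the closed cone where the $i$-th piece supports $h$ at $x_i$. On that cone your quantity equals $R$. It vanishes only on gauge directions by the same interior-point argument, now using that $x_i$, the barycenter of the absolutely continuous law $f_i^*\nu$, lies in $\operatorname{int}C$. The paper instead proves the converse directly. It perturbs $f_i^*$ by $\varepsilon\langle z,y_i\rangle1_{z\in K}$, where $K$ is a ball with $\E[Y1_{Y\in K}]=0$ found by a degree argument.
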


The first part of the proposition is easy to see. Given $\E[Y|X]=X$, we can take
    $$f_i(y)=\frac{\P(X=x_i|Y=y)}{\P(X=x_i)}$$
    Reciprocally, given $(f_1,\ldots, f_m)$ in $\mathcal{C}$, we can take 
    $$Y=\sum\limits_{i=1}^{m} X_i 1_{X=x_i}$$
    where each $X_i$ is independent of $X$ and has density $f_i d\nu$. Note that the expectation of $X_i$ is $x_i$. We save the proof of the second part of the proposition for Section \ref{entropy}. Now we apply this proposition to prove Theorem \ref{thm:main}.

\begin{proof}[Proof of Theorem \ref{thm:main}] Let $X$ be a centered random vector in $\R^n$ dominated in the convex order by a standard Gaussian. By Lemma \ref{lem:discrete approx}, we can assume that $X$ takes finitely many values. If we show that $(1-\varepsilon) X$ is the sum of three standard Gaussian vectors for any $\varepsilon>0$, by Lemma \ref{lem:closure} it follows that $X$ itself can be written as the sum of three standard Gaussian vectors. We can therefore assume that $X\preceq_{\text{cx}}(1-\varepsilon) G$.  By Proposition \ref{log-concave}, we can find $(f_1^*,\ldots, f_m^*)$ in $\mathcal{C}$ with each $f_i^*$ log-concave. We take $G=\sum\limits_{i=1}^{m} X_i 1_{X=x_i}$ where $X_i$ is independent of $X$ with density $f_i^*\gamma_n$ and $X_i$ is $1$-uniformly log-concave. In other words, Proposition \ref{log-concave} produces a martingale coupling $(X,G)$ such that $G$ conditioned on $X=x_i$ has $1$-uniformly log-concave distribution for each $x_i$. By Corollary \ref{cor:quick app}, we know that $X_i-x_i$ can be written as $Y_i+Z_i$ with $Y_i$ and $Z_i$ standard Gaussian vectors. Since $Y_i$ and $Z_i$ are independent of $X$, it follows that the random variables 
$$Y=\sum\limits_{i=1}^{m}Y_{i}1_{X=x_i},\ Z=\sum\limits_{i=1}^{m}Z_{i}1_{X=x_i}$$ are standard Gaussian vectors. We have shown that $G=X+Y+Z$ so $X$ can be written as the sum of three standard Gaussian vectors.\end{proof}

\begin{rmk} \label{rmk: bbbl}
After the first version of this paper was announced, we found out that Kay recently showed a statement very similar to Proposition \ref{log-concave} in the context of a mathematical finance problem in \cite[Section 2 and 3]{Kay25}. While his proof uses a continuity argument\footnote{There is a minor issue in the current version of \cite{Kay25}: a diffeomorphism from $\mathbb{R}^d$ onto a bounded  open  set might not extend continuously to the sphere at infinity. In particular, in the notations of \cite[Section 2]{Kay25}, an element of $\partial F(\mathbb{R}^{N(n+1)})$ may not be of the form \cite[(3.36)]{Kay25}.}, ours is based on a direct variational argument.  
Furthermore, we observe that closely related ideas also appeared in \cite{NutzWiesel} and were later generalized by \cite{schrodingerbridge} to higher dimensions in the context of martingale Schrödinger bridges. The fact that that we can find $(f_1^*,\ldots, f_m^*)\in \mathcal{C}$ such that each $f_i^*$ is log-concave follows formally from \cite[Equation (1.1)]{schrodingerbridge}. However, to apply the results in \cite{schrodingerbridge}, one needs certain technical assumptions on the distributions. We believe that one of these assumptions is the analogue of the fact that the function $g$ defined above has a unique maximum, which is not straightforward to show. 
 In our case, to obtain that, we will use the extra slack condition $X\preceq_{\text{cx}} (1-\varepsilon) Y$. 

\end{rmk}

\section{Proof of Proposition \ref{log-concave}}
\label{entropy}
Let us explain the proof of the second part of Proposition \ref{log-concave}. 
\subsection{Slack in convex domination implies softmax densities.}
Suppose $X \preceq_{\mathrm{cx}} (1-\eps)Y$ for some $\eps > 0$. We wish to find conditional densities $(f_{1}^{*}, \ldots, f_{m}^{*}) \in \mathcal{C}$ of the form discussed in the second part of Proposition \ref{log-concave}. The principle of maximum entropy guides our search and suggests that we should maximize the conditional entropy of $X$ given $Y$, or equivalently minimize the mutual information of $X$ and $Y$. This amounts to maximizing
$$H(f)=\E\left [-\sum\limits_{i=1}^{m}p_if_i(Y)\log f_i(Y)\right]$$
under the constraints given by $(f_1,\ldots, f_m)\in \mathcal{C}$. Writing down the associated Lagrangian, one sees that the extremizers must be of the desired form
$$f_i^{*}(x)=\frac{e^{U_i+\langle V_i,x\rangle}}{\sum\limits_{j=1}^{m}p_j e^{U_j+\langle V_j,x\rangle }}$$
for some $U_i\in \R,\ V_i\in \R^n$.

Informed by this, we are looking for functions $f_i^*$ of the above form satisfying the constraints of $\mathcal{C}$. To find these functions, we consider the mapping $g:\underbrace{\R\times \ldots \times \R}_{m\ \text{times}}\times \underbrace{\R^n\times \ldots \times \R^n}_{m\ \text{times}}\rightarrow \R$ given by 
\begin{align}\label{def of g}
\begin{split}
g(U,V)& = g(U_1,\ldots, U_m,V_1,\ldots, V_m)\\
&=\sum\limits_{i=1}^{m}p_i\left (U_i+\langle V_i, x_i\rangle\right ) -\E \left [\log \left( \sum\limits_{i=1}^{m}p_ie^{U_i+\langle V_i, Y\rangle}\right ) \right ] 
\end{split}
\end{align}
A critical point of $g$ will lead to values of $U_i,V_i$ which make the functions $f_i^*$ defined above satisfy the constraints of $\mathcal{C}$. 

Since $g$ is invariant under translating all the $U_i$ by the same constant and translating all the $V_i$ by the same vector, we mod out the gauge by working on the subspace
\begin{align}\label{def of S}
\mathcal{S}=\left \{(U,V):\ \sum\limits_{i=1}^{m}p_iU_i=0,\ \sum\limits_{i=1}^{m} p_iV_i=0 \right  \}
\end{align}

The function $g:\mathcal{S}\rightarrow \R$ is strictly concave. We are going to show that
$$\lim_{(U,V)\in \mathcal{S},\ \norm{(U,V)}\rightarrow \infty} g(U, V)=-\infty,$$
which implies that $g$ has a unique global maximum on $\mathcal{S}$. 

To that end, we are going to use the assumption $X\preceq_{\text{cx}} (1-\varepsilon)Y$. As explained in the paragraph following Proposition \ref{log-concave}, from the coupling of $X$ and $(1-\varepsilon)Y$ we can find nonnegative functions $\tilde{f}_1,\ldots, \tilde{f}_m:\R^n\rightarrow \R$ with 
$$\sum\limits_{i=1}^{m}p_i\tilde{f}_i(x)=1,\ \E  [\tilde{f}_i \left ((1-\varepsilon)Y\right )  ] =1,\ \E  [(1-\varepsilon) Y\tilde{f}_i \left ((1-\varepsilon)Y\right )  ] =x_i.$$
If we define $f_i:\R^n\rightarrow \R$ by $f_i(x)=(1-\varepsilon) \tilde{f}_i((1-\varepsilon)x) +\varepsilon$, then 
$$\sum\limits_{i=1}^{m}p_if_i(x)=1, \ \E [f_i(Y)]=1,\ \E[ Yf_i(Y)]=x_i$$
and moreover $f_i\ge \varepsilon$. This allows us to write

\begin{align*}
g(U,V)&=	\sum\limits_{i=1}^{m}p_i\left (U_i+\langle V_i, x_i\rangle\right ) -\E \left [\log \left( \sum\limits_{i=1}^{m}p_ie^{U_i+\langle V_i, Y\rangle }\right )  \right ] \\
&=\E \left [\sum\limits_{i=1}^{m}p_i\left (U_i+\langle V_i, Y\rangle- \log \left( \sum\limits_{j=1}^{m}p_je^{U_j+\langle V_j, Y\rangle }\right )\right )f_i(Y) \right ] \\
&= \E \left [\sum\limits_{i=1}^{m}p_i \log \left(\dfrac{e^{U_i+\langle V_i, Y\rangle }}{ \sum\limits_{j=1}^{m}p_je^{U_j+\langle V_j, Y\rangle }}\right )f_i(Y) \right ]\\
&= \E \left [\sum\limits_{i=1}^{m}p_i \log \left(\dfrac{p_ie^{U_i+\langle V_i, Y\rangle}}{ \sum\limits_{j=1}^{m}p_je^{U_j+\langle V_j, Y\rangle}}\right )f_i(Y) \right ]-\sum\limits_{i=1}^{m} p_i\log p_i,
\end{align*}
where we have used $\sum_{i=1}^{m}p_{i}f_{i}(Y)= 1,\  \mathbb{E}[Yf_{i}(Y)] = x_{i}$ in the second equality, and $\mathbb{E}[f_{i}(Y)] = 1$ in the fourth. Since $f_{i} \geq \eps$ and the logarithm terms are negative, we therefore have 
\begin{align*}
g(U, V) \le \varepsilon \E \left [\sum\limits_{i=1}^{m}p_i \log \left(\dfrac{p_ie^{U_i+\langle V_i, Y\rangle }}{ \sum\limits_{j=1}^{m}p_je^{U_j+\langle V_j, Y\rangle}}\right ) \right ]-\sum\limits_{i=1}^{m}p_i \log p_i.
\end{align*}

Expanding the logarithm and using that $\sum\limits_{i=1}^{m}p_i=1,\ \sum\limits_{i=1}^{m} p_iU_i=0$ and $\sum\limits_{i=1}^{m}p_iV_i=0$, we obtain
$$g(U,V)\le -\varepsilon \E \left [ \log \left( \sum\limits_{i=1}^{m}p_ie^{U_i+\langle V_{i}, Y \rangle}\right )\right ]-(1-\varepsilon)\sum\limits_{i=1}^{m}p_i\log p_i.$$
Note that 
\begin{align*}
	\E \left [ \log \left( \sum\limits_{i=1}^{m}p_ie^{U_i+\langle V_i, Y\rangle }\right )\right ]&\ge \E \left [\log \max_{1\le i\le m}(p_ie^{U_i+\langle V_i, Y\rangle }) \right]\\
	&=\E \left [\max_{1\le i\le m} \left (\log p_i+U_i+\langle V_i, Y\rangle \right ) \right ]\\
	&\ge \min_{1\le i\le m}\log p_i+\E \left [\max_{1\le i\le m}U_i+\langle V_i, Y\rangle \right] 
\end{align*}

To finish the proof, we observe that the second term above is homogenous, so we can factor out the scale  $\lambda$:
$$\lim_{\|(U,V)\|\to \infty} \mathbb{E} \left[ \max_{1\le i\le m} (U_i + \langle V_i, Y \rangle) \right] = \lim_{\lambda \to \infty,  \|(U,V)\|=1} \lambda \mathbb{E} \left[ \max_{1\le i\le m} (U_i + \langle V_i, Y \rangle) \right] = \infty.$$
The last equality holds because for any fixed $(U, V) \neq (0, 0)$ we have $\sum_{i=1}^{m} p_i (U_i + \langle V_i, x \rangle) = 0$ and thus $\mathbb{E} \left[ \ds\max_{1\le i\le m} U_i + \langle V_i, Y \rangle \right] > 0$ (or else $Y$ would be supported within a hyperplane, which is impossible); by compactness of the sphere we also have 
$$ \min_{\|(U,V)\|=1} \mathbb{E} \left[ \max_{1\le i\le m} (U_i + \langle V_i, Y \rangle) \right] > 0.$$

We have thereby shown that $$\ds \lim_{(U,V)\in \mathcal{S},\ \norm{(U,V)}\rightarrow \infty} g(U,V)=-\infty.$$ The map $g:\mathcal{S}\rightarrow \R$ has a unique maximum on $\mathcal{S}$ at some point $(\tilde{U},\tilde{V})$. By definition of $\mathcal{S}$ and since $g$ is invariant under uniform translation of $U$ and $V$, we conclude that $(\tilde{U},\tilde{V})$ is in fact a global maximum of $g$ on $\RR^{m} \times (\RR^{n})^{m}$. 

Computing gradients (where differentiation within the expectation is justified by the finite first moment assumption), we see that 
\begin{align*}
    \nabla_{U_{i}} g(U, V) &= p_{i} - \mathbb{E}\left[\frac{p_{i}e^{U_{i}+\langle V_{i}, Y \rangle}}{\sum_{j=1}^{m}p_{j}e^{U_{j}+\langle V_{j}, Y \rangle}}\right],\\
    \nabla_{V_{i}}g(U, V) &= p_{i}x_{i} - \mathbb{E}\left[Y \frac{p_{i}e^{U_{i}+\langle V_{i}, Y \rangle}}{\sum_{j=1}^{m}p_{j}e^{U_{j}+\langle V_{j}, Y \rangle}}\right].
\end{align*}

Since $(\tilde{U}, \tilde{V})$ is a global maximum, the equations $\nabla_U g(\tilde{U},\tilde{V})=0,\ \nabla_{V}g(\tilde{U},\tilde{V})=0$ tell us that if we define
$$f_i^{*}(x)=\frac{e^{\tilde{U}_i+\langle \tilde{V}_i,x\rangle}}{\sum\limits_{j=1}^{m}p_j e^{\tilde{U}_j+\langle \tilde{V}_j,x\rangle }},$$
then $\E[f_i^*(Y)]=1,\ \E [Yf_i^*(Y)]=x_i$. Moreover, $\sum\limits_{i=1}^{m}p_if_i^*(x)=1$ and $f_i^*$ is log-concave. This finishes one direction of the second part of Proposition \ref{log-concave}. 

\subsection{Softmax densities implies slack in convex domination.}

Now suppose that we have log-concave conditional densities of the form
$$f_{i}^{*}(x) = \dfrac{e^{U_i+\langle V_i,x\rangle}}{\sum\limits_{j=1}^{m} p_j e^{U_j+\langle V_j,x\rangle}}$$ so that $(f_{1}^{*}, \ldots, f_{m}^{*}) \in \mathcal{C}$. We want to show the existence of some $\eps' > 0$ so that $X \preceq_{\mathrm{cx}} (1-\eps')Y$. Note that this is equivalent to $(1+\eps)X \preceq_{\mathrm{cx}} Y$ for some $\eps > 0$, so we will show the latter.

By the first part of Proposition \ref{log-concave}, it suffices to construct nonnegative $f_{1, \eps}, \ldots, f_{m, \eps} \colon \RR^{n} \to \RR$ so that 
$$\sum_{i=1}^{m}p_{i}f_{i, \eps}(x) = 1, \ \mathbb{E}[f_{i, \eps}(Y)] = 1, \ \mathbb{E}[Y f_{i, \eps}(Y)] = (1+\eps)x_{i}.$$ For now, let us assume there exists some compact set $K \subset \RR^{n}$ with nonempty interior so that $P(Y \in K) \neq 0$, $\mathbb{E}[Y1_{Y \in K}] = 0$. Then $\mathbb{E}[YY^{T}1_{Y \in K}]$ is the covariance matrix of $Y1_{Y \in K}$, which must be invertible (or else $Y1_{Y \in K}$ would be supported within a hyperplane, which is impossible by our assumptions on $K$ and $Y$).

Using this invertibility, there exist $y_{1}, \ldots, y_{m} \in \RR^{n}$ satisfying $\mathbb{E}[YY^{T}1_{Y \in K}]y_{i} =  x_{i}$. Note that $\sum_{i=1}^{m}p_{i}y_{i}$ is therefore in the kernel of $\mathbb{E}[YY^{T}1_{Y \in K}]$, hence $\sum_{i=1}^{m}p_{i}y_{i} = 0$. Now define $h_{1}, \ldots, h_{m} \colon \RR^{n} \to \RR$ by 
$$h_{i}(z) = \langle z,  y_{i}\rangle 1_{z \in K}.$$ By the previous calculations, we have 
$$\sum_{i=1}^{m}p_{i}h_{i} = 0, \ \mathbb{E}[h_{i}(Y)] = 0, \ \mathbb{E}[Yh_{i}(Y)] = \mathbb{E}[YY^{T}1_{Y \in K}] y_{i} =x_{i}.$$ So, for any $\eps > 0$, defining $f_{i, \eps} = f_{i}^{*} + \eps h_{i}$, we see that 
$$\sum_{i=1}^{m}p_{i}f_{i, \eps}(x) = 1, \ \mathbb{E}[f_{i, \eps}(Y)] = 1, \ \mathbb{E}[Y f_{i, \eps}(Y)] = (1+\eps)x_{i}.$$ Since each $f_{i}^{*}$ is continuous and strictly positive, and since each $h_{i}$ vanishes outside of the compact set $K$, by choosing $\eps$ sufficiently small we can ensure that the $f_{i, \eps}$ are nonnegative.

Finally, let us justify our assumption on the existence of $K$ satisfying the conditions above. Our proof will show that we can in fact take $K$ to be a closed ball. For each $x \in \overline{B}(0, 1)$, consider the closed ball $\overline{B}(rx, r)$ for some fixed radius $r > 0$. We first claim that we can take $r$ large enough so that $P(Y \in \overline{B}(rx, r)) > 0$ for all $x \in \overline{B}(0, 1)$. Suppose not. Then there exist sequences $\{x_{i}\} \subset \overline{B}(0, 1)$, $r_{i} \to \infty$, so that $P(Y \in \overline{B}(r_{i}x_{i}, r_{i})) = 0$ for all $i$. Pass to a subsequence of the $\{x_{i}\}$ converging to $x_{\infty}$. If $x_{\infty} = 0$, the liminf of the $\overline{B}(r_{i}x_{i}, r_{i})$ is all of $\RR^{n}$, hence $P(Y \in \RR^{n}) = 0$, contradiction. If $x_{\infty} \neq 0$, the liminf of the $\overline{B}(r_{i}x_{i}, r_{i})$ contains $\{y \in \RR^{n} \colon \langle x_{\infty}, y \rangle > 0\}$, hence $P(Y \in \{y \in \RR^{n} \colon \langle x_{\infty}, y \rangle > 0\}) = 0$. This implies 
$$\mathbb{E}[Y 1_{\langle Y , x_{\infty}\rangle \leq 0}] = \mathbb{E}[Y] = 0,$$ since $Y$ is centered by assumption. But this means that $Y$ is concentrated on the hyperplane $\{y \in \RR^{n} \colon \langle x_{\infty}, y \rangle = 0\}$, which also contradicts our assumptions on $Y$. So, we can indeed choose an appropriately large $r$ as claimed.

Now, note that if $ \mathbb{E}\left[Y 1_{Y \in \overline{B}(rx, r)}\right]$ is zero for some $x$, we are already done. So, let us assume for a contradiction that $\mathbb{E}\left[Y 1_{Y \in \overline{B}(rx, r)}\right] \neq 0$ for all $x \in \overline{B}(0, 1)$. We therefore obtain a continuous map  $p \colon \overline{B}(0, 1) \to S^{n-1}$ given by
$$p(x) = \frac{\mathbb{E}\left[Y 1_{Y \in \overline{B}(rx, r)}\right]}{\norm{\mathbb{E}\left[Y 1_{Y \in \overline{B}(rx, r)}\right]}}.$$

For any $x \in \partial B(0, 1) = S^{n-1}$, note that $\overline{B}(rx, r)$ is contained in the closed half-space $\{y \in \RR^{n} \colon \langle y, x \rangle \geq 0 \}$, with the only intersection with the boundary hyperplane being the origin. Therefore, we have 
$$\langle p(x), x \rangle \geq 0$$ for all $x \in \partial B(0, 1)$. It follows that the restriction $p|_{\partial B(0, 1)} \colon S^{n-1} \to S^{n-1}$ is homotopic to the identity, with an explicit homotopy given by 
$$P(x, t)  \coloneqq  \frac{(1-t) p(x) + tx}{\norm{(1-t) p(x) + tx}}.$$ But 
$P_{0}(x, t) \coloneqq p(tx)$ is a homotopy between $p|_{\partial B(0, 1)}$ and a constant map, contradiction. This completes the other direction of the second part of Proposition \ref{log-concave}. \qed

\medskip

\begin{rmk}
As a side note, the following result is a partial  converse to some of the ideas in the above proof.
\begin{prop}\label{prop: conversedomination}
    Let $X$ be a centered discrete random vector in $\R^n$ taking the values $x_1,\ldots, x_m$ with probabilities $p_1,\ldots, p_m$, and let $Y$ be another centered random vector. Define the function $g:\mathcal{S}\rightarrow \R$ as in (\ref{def of g}) and (\ref{def of S}). If $$\ds \lim_{(U,V)\in \mathcal{S},\ \norm{(U,V)}\rightarrow \infty} g(U,V)=-\infty,$$
then $X\preceq_{\mathrm{cx}} Y$. 
\end{prop}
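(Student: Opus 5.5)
Assume $g$ is coercive on $\mathcal{S}$, i.e. $g\to-\infty$ as $\|(U,V)\|\to\infty$. We may assume $Y$ has finite first moment, since otherwise $g$ is not finite and the hypothesis is vacuous.

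The plan is to reproduce the first half of the proof of the second part of Proposition \ref{log-concave}, but without assuming any martingale coupling is known in advance. The conclusion $X\preceq_{\mathrm{cx}} Y$ will come from the maximizer of $g$ itself: the maximizer yields a martingale coupling, and Strassen's theorem (or just Jensen) then gives the convex order.

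\textbf{Step 1: a maximizer exists.} The function $g$ is continuous and concave on $\mathcal{S}$. Concavity holds because $\E[\log\sum_i p_i e^{U_i+\langle V_i,Y\rangle}]$ is an expectation of log-sum-exp of affine functions of $(U,V)$, and the remaining part of $g$ is linear. Continuity follows from dominated convergence, using the finite first moment. Coercivity then implies that the superlevel set $\{g\ge g(0)\}\cap\mathcal{S}$ is closed and bounded, hence compact, so $g$ attains a maximum on $\mathcal{S}$ at some $(\tilde U,\tilde V)$. By the gauge invariance already noted, $g$ is unchanged under adding a common constant to all $U_i$ and a common vector to all $V_i$. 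Hence $(\tilde U,\tilde V)$ is a global maximizer on all of $\R^m\times(\R^n)^m$.

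\textbf{Step 2: first-order conditions.} Differentiation under the expectation is justified as in the paper: the softmax weights lie in $[0,1]$, and $|Y|$ is integrable. Setting $\nabla_U g=\nabla_V g=0$ and using the gradient formulas computed in the proof of Proposition \ref{log-concave}, the functions
$$f_i^*(y)=\frac{e^{\tilde U_i+\langle \tilde V_i,y\rangle}}{\sum_j p_je^{\tilde U_j+\langle\tilde V_j,y\rangle}}$$
satisfy $\sum_i p_if_i^*=1$, $f_i^*\ge 0$, $\E[f_i^*(Y)]=1$ and $\E[Yf_i^*(Y)]=x_i$.

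\textbf{Step 3: build the coupling.} The construction after Proposition \ref{log-concave} does not need absolute continuity. Let $\tilde X$ take the value $x_i$ with probability $p_i$, and given $\tilde X=x_i$ let $\tilde Y$ have law $f_i^*\,d\mathcal{L}(Y)$. Then $\tilde Y\sim Y$ because $\sum_ip_if_i^*=1$, and $\E[\tilde Y\mid \tilde X=x_i]=x_i$. For any convex $\phi$, Jensen gives
$$\E[\phi(X)]=\sum_ip_i\phi(x_i)\le\sum_ip_i\E[\phi(\tilde Y)\mid\tilde X=x_i]=\E[\phi(Y)],$$
so $X\preceq_{\mathrm{cx}} Y$. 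If $\E[\phi(Y)]=+\infty$ the inequality is trivial, and since $\phi$ is bounded below by an affine function, the expectations are well defined.

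\textbf{Main obstacle.} The only delicate points are the integrability needed to differentiate under the expectation and to make $g$ finite; both reduce to the finite first moment of $Y$. Centering of $Y$ is not needed, since the maximizer condition already encodes the mean constraints.
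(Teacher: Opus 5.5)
Your argument is correct, but it takes a different route from the paper.

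\textbf{The paper's route.} The paper never produces a maximizer of $g$. It only uses the asymptotic slope
$\lim_{\lambda\to\infty}\lambda^{-1}g(\lambda U,\lambda V)=\sum_i p_i(U_i+\langle V_i,x_i\rangle)-\E[\max_i(U_i+\langle V_i,Y\rangle)]$.
Coercivity forces this slope to be nonpositive on $\mathcal{S}$, and hence on all $(U,V)$ after a gauge shift. The paper then tests the slope on the tangent planes $U_i=f(x_i)-\langle x_i,\nabla f(x_i)\rangle$, $V_i=\nabla f(x_i)$ of a smooth convex $f$, which gives $\E[f(X)]\le\E[\max_i(U_i+\langle V_i,Y\rangle)]\le\E[f(Y)]$.

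\textbf{Comparison.} The paper's proof is shorter and needs neither attainment of a maximum nor differentiation under the expectation. It really only uses nonpositivity of the slope, a condition that is in fact equivalent to $X\preceq_{\mathrm{cx}}Y$. Your proof genuinely needs coercivity, to get the maximum attained. In exchange it gives more: an explicit martingale coupling whose conditional densities have softmax form, i.e.\ an element of $\mathcal{C}$ of the type in Proposition \ref{log-concave}. Combined with the second half of that proposition (when $Y$ is absolutely continuous), your argument shows that coercivity of $g$ is equivalent to the slack condition $X\preceq_{\mathrm{cx}}(1-\varepsilon)Y$. The paper's argument does not see this.

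\textbf{A correction.} Your closing claim that centering of $Y$ is not needed is false, and it conflicts with your own Step 1. The gauge identity you invoke is
$g(U_1+c,\dots,U_m+c,V_1+w,\dots,V_m+w)=g(U,V)+\langle w,\sum_ip_ix_i-\E[Y]\rangle$.
So it holds only because $\E[Y]=\E[X]=0$. Without it, a maximizer on $\mathcal{S}$ need not satisfy $\nabla_{V}g=0$ in the gauge directions, and $\E[Yf_i^*(Y)]=x_i$ fails. For instance, with $m=1$ the set $\mathcal{S}$ is a single point, so the coercivity hypothesis holds vacuously, yet $0\not\preceq_{\mathrm{cx}}Y$ whenever $\E[Y]\neq 0$. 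The paper uses centering at the same place, when passing from $\mathcal{S}$ to all $(U,V)$.

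Similarly, your preliminary reduction to finite first moment is unnecessary, since a centered $Y$ is already integrable.
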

\begin{proof}
Note that $$\ds \lim_{\lambda \rightarrow \infty}\dfrac{1}{\lambda }g(\lambda U,\lambda V)= \sum\limits_{i=1}^{m}p_i(U_i+\langle V_i,  x_i\rangle) -\E \left [\max_{1\le i\le m} (U_i+\langle V_i, Y\rangle) \right ] $$ so the right hand side must be nonpositive for any $(U,V)\in \mathcal{S}$, which implies that it must be nonpositive for all $(U,V)$. Let $f:\R^n\rightarrow \R$ be a smooth convex function. Take $$U_i=f(x_i)-\langle x_i, \nabla f(x_i)\rangle ,\ V_i=\nabla f(x_i),$$ which gives
$$\E[f(X)]=\sum\limits_{i=1}^{m}p_if(x_i)=\sum\limits_{i=1}^{m} p_i(U_i+\langle V_i, x_{i}\rangle ) \leq \E \left [ \max_{1\le i\le m}(U_i +\langle V_i, Y\rangle) \right ] \leq \E[f(Y)],$$
where the last inequality follows from convexity of $f$.
This is true for any smooth convex function $f$, so $X\preceq_{\mathrm{cx}}Y$.
\end{proof}
\end{rmk}

\section{Proof of the Convexity Conjecture} \label{section:proof convexity}

We can finally explain how to obtain Corollary \ref{cor:main}, which solves the convexity conjecture. This particular proof is due to M. Talagrand \cite[Section 2]{Tal26}. See Remark \ref{rmk: Talrem} for an alternate proof which appeared in the first version of this paper.

\begin{prop}\label{prop: hahnbanach}
Suppose $A$ is a bounded, symmetric, open subset of $\RR^{n}$, and $b, c \in (0, 1)$ satisfy $2b +c < 1$, and are such that $b^{-1}(A+A+A)$ does not contain a symmetric convex body of Gaussian measure at least $c$. Then there exists a symmetric random vector $X$ taking values in $\R^{n} \setminus b^{-1}(A+A+A)$ so that $b X \preceq_{\mathrm{cx}} G$, where $G \sim \mathcal{N}(0, I_{n})$ is a standard Gaussian random vector. 
\end{prop}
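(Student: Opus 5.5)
The plan is a duality argument: assume no such $X$ exists, separate with Hahn--Banach to obtain a convex test function, and read off from it a symmetric convex body inside $B \coloneqq b^{-1}(A+A+A)$ of Gaussian measure at least $c$. Note that $B$ is open, bounded and symmetric, so $F \coloneqq \R^n\setminus B$ is closed, symmetric, and avoids a neighborhood of $0$.

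\emph{Step 1 (separation).} Let $\mathcal{M}$ be the convex set of symmetric probability measures on $F$. Suppose, for contradiction, that no $\mu\in\mathcal{M}$ satisfies $b\mu\preceq_{\mathrm{cx}}\gamma_n$, where $b\mu$ denotes the law of $bX$ for $X\sim\mu$. I would set up a minimax (Hahn--Banach) statement over the pairing
\[
(\mu,f)\mapsto \int f(bx)\,d\mu(x)-\E[f(G)],
\]
with $f$ ranging over convex functions of at most quadratic growth. The aim is a convex $f$ with $\E[f(G)]\le 0$ and $f(bx)>0$ for every $x\in F$. Replacing $f$ by $\tfrac12(f(x)+f(-x))$ preserves convexity and both properties, since $F$ and $\gamma_n$ are symmetric, so $f$ may be taken symmetric. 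Then $f$ attains its minimum at $0$, and $f(0)\le \E f(G)\le 0$. Testing against $\delta$-type measures at points of $F$, I expect to be able to take $f(0)<0$ strictly.

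\emph{Step 2 (the convex body).} Put $K\coloneqq\{x: f(bx)\le 0\}$. This set is symmetric and convex, and it lies in $B$ because $f(b\cdot)>0$ on $F$. It remains to show $\gamma_n(K)\ge c$. Write $m=-f(0)>0$. Since $f\ge -m$ everywhere,
\[
\E[f^+(G)]=\E[f(G)]+\E[f^-(G)]\le m .
\]
If $x\notin K$, convexity between $0$ and $x$ gives $0<f(bx)\le b f(x)+(1-b)f(0)$, hence $f(x)>\tfrac{1-b}{b}m$. By Markov's inequality,
\[
\gamma_n(\R^n\setminus K)\le \frac{b}{1-b}.
\]
Therefore $\gamma_n(K)\ge \frac{1-2b}{1-b}\ge 1-2b>c$, which uses exactly the hypothesis $2b+c<1$. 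Passing to the closure of $K$ if needed gives a symmetric convex body of measure at least $c$ inside $B$. This contradicts the hypothesis, so some $\mu\in\mathcal{M}$ satisfies $b\mu\preceq_{\mathrm{cx}}\gamma_n$, and $X\sim\mu$ is the required vector.

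\emph{Main obstacle.} The hard step is making Step 1 rigorous, because $F$ is unbounded and $\mathcal{M}$ is not weak-$*$ compact. I would handle this in three moves. First, restrict to measures on $F\cap \overline{B}(0,R)$, which is compact, and to convex functions that are Lipschitz on $\overline{B}(0,bR)$, so that a minimax theorem (Sion, or Hahn--Banach in $C(\overline{B}(0,bR))$ with the closed convex cone generated by convex functions minus their Gaussian means) applies. Second, if for some $R$ a compactly supported $\mu$ with $b\mu\preceq_{\mathrm{cx}}\gamma_n$ exists, we are done. Otherwise, for every $R$ we obtain a separating convex $f_R$ with $f_R(b\cdot)>0$ on $F\cap \overline{B}(0,R)$, and Step 2 run on this finite region should give the same contradiction, since $K_R$ still lies in $B$ once $R$ exceeds the radius of $B$. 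Third, the remaining care is in passing from "no measure with $\int f(b\cdot)\,d\mu\le\E f(G)$ for all $f$" to a single strictly separating $f$. For this I would use compactness of $\mathcal{M}_R$ together with lower semicontinuity of $\mu\mapsto\sup_f\bigl(\int f(b\cdot)\,d\mu-\E f(G)\bigr)$.
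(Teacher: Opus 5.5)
Your argument is correct. Its engine is the same duality the paper uses: a Hahn--Banach separation, in the continuous functions on a compact piece of $\R^n\setminus b^{-1}(A+A+A)$, between convex test functions and strictly positive functions. This is followed by Markov's inequality on a sublevel set of the separating convex function, with the factor $b$ absorbed through $f(bx)\le bf(x)+(1-b)f(0)$. The bookkeeping differs.

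The paper works with $f\ge 0$ convex, $f(0)=0$, $\E f(G)\le 1$. It uses the hypothesis to show that every $C_f=\{f\le b^{-1}\}$ meets $C=B\setminus b^{-1}(A+A+A)$, where $B$ is a ball of Gaussian measure at least $b+\frac{1+c}{2}$; otherwise $(B\cap C_f)\cap(-(B\cap C_f))$ would be a forbidden body. It then obtains the measure directly from the separation and symmetrizes it afterwards with a Rademacher sign.

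You argue by contradiction and symmetrize the test function instead of the measure, so $K_R$ is symmetric for free. You also replace the intersection with a large ball by the fact that $K_R$ is convex and contains $0$. Hence $K_R$ cannot cross the shell $\{R_0<|x|\le R\}\subset F_R=F\cap\overline{B}(0,R)$, where $R_0=\sup\{|x|:x\in b^{-1}(A+A+A)\}$. This is the actual reason $K_R\subset b^{-1}(A+A+A)$, and you should state it.

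What your route buys is a better constant, since nothing is lost to the ball or to the reflection. You get $\gamma_n(K_R)\ge\frac{1-2b}{1-b}$, so the statement holds whenever $\frac{1-2b}{1-b}\ge c$, a weaker requirement than $2b+c<1$. Applying Markov instead to the nonnegative function $f+m$, whose Gaussian mean is at most $m$, at level $m/b$ even gives $\gamma_n(K_R)\ge 1-b$. The paper's normalization, in turn, makes the Markov step immediate and never needs strict negativity at the origin.

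A few small repairs. The inequality $f(0)<0$ does not come from testing against point masses. It holds because $\min_{F_R} f(b\,\cdot)>0$ lets you subtract a small positive constant from $f$; alternatively, $f(0)=0$ would give $f\ge 0$ with $\E f(G)\le 0$, hence $f\equiv 0$. Your opening remark that $F$ avoids a neighbourhood of $0$ is false in general: for $A=\{x\in\R:1<|x|<\frac{11}{10}\}$ one has $0\notin A+A+A$. Nothing depends on it, though. Finally, the separation should be run in $C(F_R)$ rather than $C(\overline{B}(0,bR))$, so that the measure it produces is supported on $F_R$.
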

\begin{proof}

Let $\mathcal{C}$ denote the set of nonnegative, convex functions $f \colon \RR^{n} \to \RR$ satisfying $f(0) = 0$, $\mathbb{E}[f(G)] \leq 1$. For each $f \in \mathcal{C}$, let $C_{f} := f^{-1}((-\infty, b^{-1}])$, so that each $C_{f}$ is convex and satisfies $\gamma_{n}(C_{f}) \geq 1-b$.  Let $B$ be a closed ball centered at the origin of Gaussian measure at least $b + \frac{1+c}{2}$ (this is possible since $2b + c < 1$). Then for any $f \in \mathcal{C}$, $B \cap C_{f}$ is a convex body with $\gamma_{n}(B \cap C_{f}) \geq \frac{1+c}{2}$. Setting $C := B \setminus b^{-1}(A+A+A)$, we see that $C \cap C_{f}$ must be nonempty for any $f \in \mathcal{C}$, or else $b^{-1}(A +A +A)$ would contain $(B \cap C_{f}) \cap (-(B \cap C_{f}))$, which is a symmetric convex body with Gaussian measure at least $c$. 

Consider the convex subsets $\mathcal{C}_{0}, \mathcal{C}_{1}$ of the space $Z$ of continuous functions on $C$ with the sup norm,
defined as follows: $\mathcal{C}_{0}$ is simply the restriction of $\mathcal{C}$ to $C$, and $\mathcal{C}_{1}$ consists of all functions which
are strictly larger than $b^{-1}$ pointwise. Since $C \cap C_{f}$ is nonempty for any $f \in \mathcal{C}$, we see that $\mathcal{C}_{0}, \mathcal{C}_{1}$ are disjoint. Since $C$ is compact, $\mathcal{C}_{1}$ is open, hence by the geometric form of Hahn-Banach there exists a nonzero linear functional $\varphi \colon Z \to \RR$ such that 
$$\sup_{f \in \mathcal{C}_{0}} \varphi(f) \leq \inf_{f \in \mathcal{C}_{1}} \varphi(f).$$

Now observe that $\mathcal{C}_{1}$ is closed under adding an arbitrary nonnegative, continuous function $g$ defined on $C$, so in order for the previous inequality to hold we must have $\varphi(g) \geq 0$ for all such $g$. It follows that $\varphi$ is given by integration against a nontrivial nonnegative Borel measure $\mu$ supported in $C$. We can rescale $\varphi$ so that $\mu$ is a probability measure. Then, by choosing a sequence of constant functions $f_{i} \in \mathcal{C}_{1}$ with value approaching $b^{-1}$, we see that $$\sup_{f \in \mathcal{C}_{0}} \varphi(f) = \sup_{f \in C_{0}} \int f d \mu \leq b^{-1}.$$ 

Letting $Y$ be a random vector with law $\mu$, we therefore have $\mathbb{E}[f(Y)] \leq b^{-1}$ for any $f \in \mathcal{C}$. Since $\mathbb{E}[f(G)] \leq 1$ for any $f \in \mathcal{C}$, by rescaling we conclude that for any nonnegative, convex $f$ satisfying $f(0) = 0$, we have $\mathbb{E}[f(Y)] \leq b^{-1}\mathbb{E}[f(G)]$. Since $f$ is convex, we then have $\mathbb{E}[f(bY)] \leq \mathbb{E}[f(G)]$ for all nonnegative, convex $f$ satisfying $f(0) = 0$. Since these conditions on $f$ are unchanged by precomposition with $x \mapsto -x$, we conclude that $\mathbb{E}[f(-bY)] \leq \mathbb{E}[f(G)]$ for all such $f$ as well.

Take $\eta$ to be a Rademacher random variable which is independent of $Y$, so that $X = \eta Y$ is symmetric, takes values in $C$, and satisfies $\mathbb{E}[f(bX)] \leq \mathbb{E}[f(G)]$ for all nonnegative, convex $f$ with $f(0) = 0$. Since any convex $f$ can be made to satisfy these conditions by adding an affine function, and since we have equality in the previous inequality for affine functions, we therefore have $\mathbb{E}[f(bX)] \leq \mathbb{E}[f(G)]$ for all convex $f$. This shows $bX \preceq_{\mathrm{cx}} G$, as desired.
\end{proof}

\begin{cor}\label{prop:implication}
Suppose that for any $n\geq 1$, any random vector in $\R^n$ which is dominated in convex order by a standard Gaussian random vector is the sum of three standard Gaussian vectors. Let $b, c \in (0, 1)$ satisfy $2b +c \leq 1$. Then for any closed set $A \subset \R^{n}$ with $\gamma_{n}(A) > \frac{5}{6}$, the set $b^{-1}(A+A+A)$ contains a symmetric convex body $K$ with $\gamma_{n}(K) \geq c$.
\end{cor}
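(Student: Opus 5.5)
We argue by contradiction, combining Proposition \ref{prop: hahnbanach} with the hypothesis. Suppose $A$ is closed with $\gamma_n(A)>\frac56$ and $b^{-1}(A+A+A)$ contains no symmetric convex body of measure $\geq c$.

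\emph{Reduction to a bounded, symmetric, open set.} Replace $A$ by a bounded symmetric open $A'$ with $A'+A'+A'\subset A+A+A$ up to a small loss, and $\gamma_n(A')>\frac23$. Concretely, first intersect $A$ with $-A$ and with a large ball. Since $\gamma_n(A\cap -A)\geq 1-2\cdot\frac16=\frac23$, this symmetrization is exactly where the constant $\frac56$ is used. We need strict inequality for $\gamma_n(A')$, which comes from $\gamma_n(A)>\frac56$ strictly after truncating by a large ball. Openness is trickier, since shrinking a closed set to an open one loses measure. The workable route is to fatten slightly: take $A'=(A\cap -A\cap B_R)+B(0,\delta)$, which is open, symmetric and bounded, and satisfies $A'+A'+A'\subset (A+A+A)+B(0,3\delta)$. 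Then pass to a limit $\delta\to0$ using closedness of $A$, or instead use slightly enlarged $b$ and $c$ and a compactness argument on symmetric convex bodies, which are closed under Hausdorff limits with upper semicontinuous Gaussian measure. Since $2b+c\le1$ is non-strict in the statement while Proposition \ref{prop: hahnbanach} requires $2b+c<1$, we also perturb $b$ or $c$ slightly and take a limit at the end, again using Blaschke selection on symmetric convex bodies inside a large ball. These approximation steps are routine but fiddly, and I expect them to be the main technical nuisance.

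\emph{Main step.} Apply Proposition \ref{prop: hahnbanach} to $A'$. It produces a symmetric $X$ taking values outside $b^{-1}(A'+A'+A')$ with $bX\preceq_{\mathrm{cx}}G$. By the hypothesis (Theorem \ref{thm:main}), $bX=G_1+G_2+G_3$ with each $G_i\sim\mathcal N(0,I_n)$. Since $\P(G_i\notin A')<\frac13$ for each $i$, a union bound gives
$$\P(G_1,G_2,G_3\in A')>0.$$
On that event $bX=G_1+G_2+G_3\in A'+A'+A'$, i.e.\ $X\in b^{-1}(A'+A'+A')$. This contradicts the fact that $X$ almost surely lies outside this set.

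Thus $b^{-1}(A'+A'+A')$, and hence after the limiting argument $b^{-1}(A+A+A)$, contains a symmetric convex body of Gaussian measure at least $c$.
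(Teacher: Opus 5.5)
Your proposal is correct and matches the paper's proof: truncate, symmetrize and fatten to reach a bounded symmetric open set of Gaussian measure $>\frac23$, get strictness $2b+c<1$ by lowering $c$, and undo both approximations by Blaschke selection; then apply Proposition \ref{prop: hahnbanach}, the three-Gaussian hypothesis, and the union bound. The only differences are cosmetic: you symmetrize before fattening, whereas the paper fattens the compact set first and then intersects with its reflection. Of the two limiting routes you mention, use the first ($\delta\to0$ via Hausdorff limits, using that $A$ is closed); the alternative with "slightly enlarged $b$ and $c$" is not needed and would not by itself remove the $3\delta$-neighborhood.
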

\begin{proof}
First, by replacing $A$ with the intersection of $A$ with a sufficently large ball centered at the origin, it is enough to show the proposition for compact $A$. Then note that if for $c \in (0, 1)$ the compact set $b^{-1}(A+A+A)$ contains a symmetric convex body $K_{c'}$ with $\gamma_{n}(K_{c'}) \geq c'$ for all $c' < c$, then by the Blaschke compactness theorem a subsequence of the $K_{c'}$ converge (in the Hausdorff sense) to a symmetric convex body $K$ with $\gamma_{n}(K) \geq c$, which is contained in $b^{-1}(A+A+A)$. So, we may further assume the strict inequality $2b +c < 1$.

Next, let us explain why it suffices to prove the proposition for bounded open sets $A$ instead of compact sets $A$.  
 Assume that the conclusion of the proposition holds for bounded open sets. Let $A$ be a compact set in $\R^n$, and for $0 < t <1$, let $A_t$ be the open $t$-neighborhood of $A$ in $\R^n$.  By assumption, if $\gamma_n(A) > \frac{5}{6}$ then $b^{-1}(A_t+A_t+A_t)$ contains a symmetric convex body $K_t$ with $\gamma_n(K_t)\geq c$. Note that the $b^{-1}(A_t+A_t+A_t)$, and therefore the $K_{t}$, are uniformly bounded. By the Blaschke compactness theorem, we can find a sequence $t_i\to 0$ such that $K_{t_i}$ converges in the Hausdorff topology to a convex set $K_0$ and $\gamma_n(K_{t_i}) \to \gamma_n(K_0)\geq 1/2$. Moreover, 
$b^{-1}(A_{t_i}+A_{t_i}+A_{t_i})$ converges in the Hausdorff topology to $b^{-1}(A+A+A)$, so $K_0\subset b^{-1}(A+A+A)$, which is what we want. Thus it is enough to show the proposition for bounded open subsets.

Suppose then that $A$ is a bounded open subset of $\R^n$ such that $\gamma_n(A) > \frac{5}{6}$. Suppose, for a contradiction, that $b^{-1}(A+A+A)$ does not contain a symmetric convex body of Gaussian measure at least $c$. Let $A' = A \cap (-A)$, so that $A'$ is bounded, symmetric, open, and satisfies $\gamma_{n}(A') > \frac{2}{3}$. Note that $b^{-1}(A'+A'+A')$ also does not contain a symmetric convex body of Gaussian measure at least $c$. 

By Proposition \ref{prop: hahnbanach}, there exists a symmetric random vector $X$ taking values in $\RR^{n} \setminus b^{-1}(A'+A'+A')$ so that $b X \preceq_{\mathrm{cx}} G$. By assumption, we may write $bX =  G_{1} + G_{2} + G_{3}$, where each $G_{i}$ is a standard Gaussian random vector. Then $\mathbb{P}(G_{i} \in A') > \frac{2}{3}$ for each $i$, and therefore by the union bound $\mathbb{P}(bX = G_{1} + G_{2} + G_{3} \in A' + A' + A') > 0$. But this implies $\mathbb{P}(X \in b^{-1}(A' + A'+A')) > 0$, contradicting that $X$ takes values in $ \RR^{n} \setminus b^{-1}(A' +A' +A')$. This completes the proof.
\end{proof}

Combining Theorem \ref{thm:main} and Corollary \ref{prop:implication} (taking $b = \frac{1}{4}, c = \frac{1}{2}$) immediately imply
Corollary \ref{cor:main} and a positive solution to Problem \ref{T}. Note that the choice of the constant $\frac{5}{6}$ is nonessential for Problem \ref{T}. Certainly having $\gamma_{n}(A) > \frac{2}{3}$ is enough, since for instance by \cite[Lemma 1.2, Lemma 1.3]{Son26} there is a universal constant $p$ so that the $p$-fold Minkowski sum of $A$ has Gaussian measure greater than $\frac{5}{6}$.

\begin{rmk}\label{rmk: Talrem}
The short ``Hahn-Banach proof'' of Corollary \ref{cor:main} from Theorem \ref{thm:main} presented above is due to Talagrand. In the first version of our paper, we showed Corollary \ref{cor:main} without explicit constants using a related but slightly less direct argument, based on the equivalence between Problem \ref{T} and Problem \ref{S} \cite[Theorem 1.1]{Son26}. That argument first uses that by van Handel's result \cite{Van25} and Theorem \ref{thm:main}, 
there is a universal $c>0$ such that for any $1$-subgaussian random vector in $\mathbb{R}^n$, $cX$ is a sum of three Gaussian vectors. Then by a refinement of one direction in the equivalence of \cite[Theorem 1.1]{Son26}, we  obtain Corollary \ref{cor:main} without explicit constants. We note that the relevant direction in \cite[Theorem 1.1]{Son26} is based on a result of Dadush-Garg-Lovett-Nikolov \cite[Theorem 1.2]{Dad19}, which is established using von Neumann’s minimax principle (see proof of \cite[Theorem 3.4]{Dad19}).
\end{rmk}

\medskip

\appendix

\section{Combinatorial Consequences}\label{app: combcons}

For the reader's convenience, we explain how the geometric convexity problem Corollary \ref{cor:main} implies its combinatorial analogue Corollary \ref{cor:comb}.
First we show that our affirmative answer to Problem \ref{T} implies an affirmative answer to the following ``solid" variant of Problem \ref{T}. We say that $K \subseteq \RR^{n}$ is solid if for any $y$ for which there exists $x \in K$ with $|x_{i}| \geq |y_{i}|$ for all $i$, we have $y \in K$.\footnote{Note that in \cite{Tal95} there is a typo in this definition.} For ease of notation let us write $A_{(q)}$ for the $q$-fold Minkowski sum of $A$ with itself.

\begin{thm}\label{thm: solid}
    There exists $\varepsilon>0$ such that for any $n \geq 1$ and any closed solid set $A$ in $\RR^{n}$ with $\gamma_{n}(A) \geq 1-\varepsilon$, there is a symmetric solid convex body $C$ in $\RR^{n}$ such that 
    $$\gamma_{n}(C) \geq \frac{1}{2} \quad \text{and}\quad  C\subset \varepsilon^{-1}A_{(3)}.$$
\end{thm}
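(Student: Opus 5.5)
Starting from Corollary \ref{cor:main}, which gives a symmetric convex body inside $4(A+A+A)$, I will make that body solid. The key is the sign-flip symmetry of solid sets. For $\sigma\in\{-1,1\}^n$ write $D_\sigma$ for the diagonal sign matrix. A solid set $A$ satisfies $D_\sigma A=A$ for every $\sigma$, and hence $D_\sigma A_{(3)} = A_{(3)}$. Fix $\varepsilon\le 1/6$ small, so that $\gamma_n(A)\ge 1-\varepsilon>5/6$; the closedness hypothesis is as in Corollary \ref{cor:main}. The plan has three steps.

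\emph{Step 1.} Apply Corollary \ref{cor:main} to obtain a symmetric convex body $K\subset 4A_{(3)}$ with $\gamma_n(K)\ge 1/2$.

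\emph{Step 2.} Symmetrize $K$ under the sign group to make it unconditional. Intersecting the images $\bigcap_\sigma D_\sigma K$ keeps us inside $4A_{(3)}$, but the Gaussian measure could collapse to almost nothing. Instead, I will run the argument of Proposition \ref{prop: hahnbanach} equivariantly. Suppose $b^{-1}A_{(3)}$ contains no symmetric solid convex body of measure $\ge c$. In the Hahn--Banach step, restrict the class of test functions to convex functions that are invariant under all $D_\sigma$ and nondecreasing in each $|x_i|$. Their sublevel sets are exactly the solid convex sets, and the set $C=B\setminus b^{-1}A_{(3)}$ is $D_\sigma$-invariant.

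This produces a random vector $Y$ with $\E f(bY)\le \E f(G)$ for every convex $f$ in this restricted class. The restriction does not give full convex domination. However, replacing $Y$ by $D_\eta Y$, with $\eta$ uniform on $\{-1,1\}^n$ and independent of $Y$, makes the law unconditional. For an unconditional $Y$ and an arbitrary convex $f$, averaging $f$ over the sign group, $\bar f=2^{-n}\sum_\sigma f\circ D_\sigma$, gives an invariant convex function. Since $\bar f$ is invariant and convex, it is automatically nondecreasing in each $|x_i|$ (convexity along each coordinate line plus evenness). Thus $\E f(bY)=\E\bar f(bY)\le \E\bar f(G)=\E f(G)$, and full convex domination $bY\preceq_{\mathrm{cx}}G$ is recovered.

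\emph{Step 3.} By Theorem \ref{thm:main}, $bY=G_1+G_2+G_3$. Since each $\P(G_i\in A)\ge 1-\varepsilon>2/3$, the union bound gives $\P(bY\in A_{(3)})>0$. This contradicts $Y$ being supported in $C$. Taking $b=1/4$, $c=1/2$ and $\varepsilon\le 1/6$ with $\varepsilon^{-1}\ge 4$, and noting that $4A_{(3)}\subset\varepsilon^{-1}A_{(3)}$ because $A_{(3)}$ is solid and hence star-shaped, yields the statement.

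The main obstacle is the Hahn--Banach step in Step 2. One must check that for the restricted class of invariant, coordinatewise-monotone convex functions, the sets $\{f\le b^{-1}\}\cap B$ are symmetric solid convex bodies of measure $\ge (1+c)/2$. One must also check that the separating measure can be taken $D_\sigma$-invariant, either by averaging or by the unconditionalization used above. I would verify these two points first. The compactness reductions (passing from closed to bounded open sets via neighborhoods) should carry over verbatim from Corollary \ref{prop:implication}, provided the reductions preserve solidity. Open neighborhoods of solid sets in the $\ell^\infty$ metric remain solid, so I would use those.
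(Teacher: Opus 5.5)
Your proof is correct, but it takes a different route from the paper's.

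\emph{The paper's route.} The paper does not reopen the duality argument. It takes the convex body $K\subset\varepsilon^{-1}A_{(3)}$ with $\gamma_n(K)\ge\frac12$ from Corollary~\ref{cor:main}, picks an orthant $O$ maximizing $\gamma_n(K\cap O)$, and lets $C$ be the solidification of $K\cap O$. Three facts then finish the proof.
\begin{enumerate}[(i)]
\item $C$ is convex, because points of $O$ share a sign pattern, so $|(1-t)x_i+tx'_i|=(1-t)|x_i|+t|x'_i|$ for $x,x'\in O$.
\item $\gamma_n(C)\ge 2^n\gamma_n(K\cap O)\ge\gamma_n(K)$, because $C$ contains the $2^n$ essentially disjoint reflections of $K\cap O$.
\item $C\subset\varepsilon^{-1}A_{(3)}$, because that set is solid and contains $K\cap O$.
\end{enumerate}
So the obstacle you raise in Step~2 (intersecting the reflections of $K$ can destroy its measure) is avoided by solidifying a single orthant piece instead of intersecting. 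In your own write-up, Step~1 is in fact never used.

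\emph{Your route.} You rerun the Hahn--Banach argument of Proposition~\ref{prop: hahnbanach} with sign-invariant convex test functions, whose minimum is at $0$ and whose sublevel sets are solid. You then unconditionalize the separating measure, which is legitimate since $B\setminus b^{-1}A_{(3)}$ is sign-invariant. Finally you recover full convex order by averaging $f$ over the sign group. All of this checks out. Solidity is also preserved under each reduction: intersection with a centered ball, open neighborhoods, and Hausdorff limits.

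\emph{Comparison.} The paper's route buys brevity and modularity: Corollary~\ref{cor:main} is a black box and no approximation needs to be redone. Yours is self-contained from Theorem~\ref{thm:main} and gives slightly cleaner constants.
\begin{itemize}
\item Solid sets are already symmetric, so $\gamma_n(A)>\frac23$ suffices.
\item Each sublevel set $\{f\le b^{-1}\}$ is already symmetric, so you only need a ball with $\gamma_n(B)\ge b+c$. Hence $b=\frac14$, $c=\frac12$ work directly.
\item By contrast, a literal transfer of Corollary~\ref{prop:implication} needs $2b+c<1$, and with your choice $2b+c=1$ it would need its Blaschke limiting step.
\end{itemize}
The equivariant duality argument also adapts to other groups of orthogonal symmetries of $A$, where nothing like the orthant trick is available.
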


\begin{proof}
     By Corollary \ref{cor:main}, assuming that $\varepsilon>0$ is small enough, there exists a convex body $K$ in $\RR^{n}$ such that $\gamma_{n}(K) \geq \frac{1}{2}$ and $K \subset \varepsilon^{-1}A_{(3)}.$
    Let $O$ be an orthant of $\RR^{n}$ so that $\gamma_{n}(K \cap O)$ is maximal among all orthants. Then let $C$ be the ``solidification" of $K \cap O$:
    $$C \coloneqq \{y \in \RR^{n} \mid \exists \; x \in K \cap O, |x_{i}| \geq |y_{i}| \; \text{for all } i\}.$$ By definition, $C$ is solid, and thus symmetric. 
    We want to show that $C$ is convex. Let $y^{(0)}, y^{(1)} \in C$ be arbitrary, and define $y^{(t)} \coloneqq (1-t)y^{(0)} + ty^{(1)}$ for $t \in [0, 1]$. By definition, there exist $x^{(0)}, x^{(1)} \in K \cap O$ with $|x^{(0)}_{i}| \geq |y^{(0)}_{i}|, |x^{(1)}_{i}| \geq |y^{(1)}_{i}|$ for all $i$. For each $t$, by convexity we have $x^{(t)} \in K \cap O$, where $x^{(t)} \coloneqq (1-t)x^{(0)} + tx^{(1)}$. Note that since $x^{(0)}, x^{(1)} \in O$, $x^{(0)}_{i}, x^{(1)}_{i}$ have the same sign for each $i$. It follows that 
    $$|x^{(t)}_{i}| = (1-t)|x^{(0)}_{i}| + t |x_{i}^{(1)}| \geq (1-t)|y_{i}^{(0)}| + t |y_{i}^{(1)}| \geq |y_{i}^{(t)}|$$ for each $i$. Therefore $y^{(t)} \in C$, and so $C$ is convex as claimed.

    By the definition of solidity, $C$ is closed under inverting the sign of any coordinate. The copies of $K \cap O$ obtained by inverting a subset of the coordinates are all mutually disjoint (up to $\gamma_{n}$-null sets), so 
    $$\gamma_{n}(C) \geq 2^{n}\gamma_{n}(K \cap O) \geq \gamma_{n}(K)\geq \frac{1}{2},$$ where the second inequality follows from the maximality of $O$. 

    It remains to check that $$C \subset \varepsilon^{-1}A_{(3)}.$$ 
    Since $A$ is solid, $\eps^{-1}A_{(3)}$ is solid too. Moreover $\eps^{-1}A_{(3)}$ contains $K \cap O$, so it must contain the solidification $C$, and we are done.
\end{proof}

\begin{rmk}\label{rmk: strictlygreater}
By a standard argument, in Theorem \ref{thm: solid} we may in fact guarantee $\gamma_{n}(C) \geq \frac{2}{3} + \delta$ for some small $\delta > 0$, at the cost of decreasing the universal constant $\varepsilon$. This follows from the elementary fact that if $C$ is symmetric convex and  $\gamma_{n}(C) \geq \frac{1}{2}$, then for some universal $\lambda>0$, $\gamma_{n}(\lambda C) \geq \frac{2}{3} + \delta$. Therefore, by taking the intersection with a sufficiently large ball, we can guarantee that $C$ is both compact and satisfies $\gamma_{n}(C) \geq \frac{2}{3}$. Also, note that the assumption that $A$ is closed is unnecessary at the cost of modifying $\eps$ slightly, since we can always approximate $A$ by a closed solid subset with arbitrarily small measure loss.
\end{rmk}

\medskip 

Note that Theorem \ref{thm: solid} is a slightly weaker version of the still open \cite[Problem 3.1]{Tal95}, with $\varepsilon^{-1}(A+A)$ replaced by $\varepsilon^{-1}(A+A+A)$. We claim that this version implies the following modified version of \cite[Problem 3.2]{Tal95}.
\begin{thm}\label{thm: logsequences}
    There exist constants $\eps, L >0$ so that for each solid subset $A \subset \RR^{n}$ with $\gamma_{n}(A) \geq 1-\eps$, there exists a finite sequence $\{I_{k}\}_{k \geq 1}$ of subsets of $[n]$ such that 
    \begin{itemize}
        \item $L |I_{k}| \geq \log(k+1)$ for each $k,$
        \item For any $x \in \RR^{n}$, if $\sum_{i \in I_{k}} x_{i}^{2} \leq L^{-1}|I_{k}|$  for all $k \geq 1$, then $x \in A_{(3)}$.
    \end{itemize}
\end{thm}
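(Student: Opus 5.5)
The plan is to apply Theorem \ref{thm: solid}, replacing $A$ by a closed solid subset as in Remark \ref{rmk: strictlygreater} if needed. This gives a symmetric solid convex body $C$ with $\gamma_n(C)\ge \frac{2}{3}$ and $C\subset \varepsilon_0^{-1}A_{(3)}$, where $\varepsilon_0$ is the constant of Theorem \ref{thm: solid}. It then suffices to find sets $I_k$ such that the two hypotheses on $x$ force $x\in \varepsilon_0 C$. The constant $\varepsilon$ in the statement will be $\varepsilon_0$ or smaller, and $L$ will be large.

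\textbf{Step 1: dual description and a first moment bound.} Since $C$ is solid, convex and closed, Hahn--Banach lets us write it as an intersection of unconditional slabs:
$$C=\Big\{x:\ \sum_i a_i|x_i|\le 1\ \ \forall a\in T\Big\}$$
for some set $T\subset \R_+^n$. Let $N(x)=\sup_{a\in T}\sum_i a_i|x_i|$ be the corresponding norm. From $\gamma_n(C)\ge \frac23$, Gaussian concentration (or Borell's lemma for norms) gives $\E N(G)\le c_0$ for a universal $c_0$. In particular, since each $|g_i|$ has mean $\sqrt{2/\pi}$, every $a\in T$ satisfies $\sum_i a_i\le c_0\sqrt{\pi/2}$, and more generally $\E\sup_{a\in T}\sum_i a_i|g_i|\le c_0$.

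\textbf{Step 2: level-set decomposition.} For $a\in T$ and $j\ge 0$, set $J_j(a)=\{i: a_i\ge 2^{-j}\}$. Then $a\le \sum_j 2^{-j+1}1_{J_j(a)\setminus J_{j-1}(a)}$. By Cauchy--Schwarz, the contribution of level $j$ is at most
$$2^{-j+1}|J_j(a)|^{1/2}\Big(\sum_{i\in J_j(a)}x_i^2\Big)^{1/2}.$$
If $J_j(a)$ is one of the listed sets $I_k$, this is at most $2^{-j+1}L^{-1/2}|J_j(a)|$. Summing over $j$ gives something comparable to $L^{-1/2}\sum_i a_i\lesssim L^{-1/2}$, which is below $\varepsilon_0$ once $L$ is large. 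So the list $\{I_k\}$ should be the family
$$\mathcal{F}=\{J_j(a): a\in T,\ j\ge 0\},$$
suitably pruned. Only finitely many distinct sets arise, and we enumerate them in nondecreasing order of size. The first bullet then amounts to a counting bound: for each $s$, the number of sets in $\mathcal{F}$ of size at most $s$ is at most $e^{Ls}$.

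\textbf{Step 3: the counting bound (main obstacle).} I would derive the counting bound from $\E N(G)\le c_0$ through a Sudakov/union-bound argument adapted to the positive process $\sum_i a_i|g_i|$. Suppose there were $M\gg e^{Ls}$ sets $J\in\mathcal{F}$ at a fixed level $j$ with $|J|\approx s$. Each such $J$ comes from some $a\in T$ with
$$N(G)\ge 2^{-j}\sum_{i\in J}|g_i|.$$
With many nearly disjoint such $J$'s, the maximum of $2^{-j}\sum_{i\in J}|g_i|$ over these sets would exceed $2^{-j}s\big(1+c\sqrt{\log M/s}\big)$. Sets with large overlaps have to be handled by passing to a maximal well-separated subfamily and charging the rest to it. Combined with $2^{-j}s\lesssim \sum_i a_i\lesssim 1$, this should contradict $\E N(G)\le c_0$ once $\log M\ge Ls$.

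The delicate points are two. First, the levels $j$ must be summed uniformly: I would keep only levels where $2^{-j}|J_j(a)|$ is a non-negligible fraction of $\sum_i a_i$, discard the geometric tail, and absorb its contribution into the $\ell^1$ bound. Second, the overlapping sets require a chaining or majorizing-measure style argument rather than a plain union bound. This is exactly the step where Talagrand's original reduction in \cite[Section 3]{Tal95} is used, and I expect it to be the core difficulty. Once the counting bound holds, the conditions on $x$ give $N(x)\le \varepsilon_0$, so $x\in\varepsilon_0 C\subset A_{(3)}$.
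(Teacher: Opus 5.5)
Your Step 1 is sound, and it is exactly what the paper needs. Theorem~\ref{thm: solid} with Remark~\ref{rmk: strictlygreater} gives a solid convex $C$ with $\gamma_n(C)\ge\frac23$ and $C\subset\varepsilon^{-1}A_{(3)}$, and Gaussian concentration bounds $\E\|G\|_C$ by a universal constant.

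The gap is in Steps 2--3, which carry the entire difficulty, and the construction you propose there fails. The family of level sets of dual functionals is in general far too large for the counting bound, and your relative pruning does not cut it down. Take $C=rB_2^n$ with $r\approx\sqrt n$ chosen so that $\gamma_n(C)=\frac23$. Since $\|e_i\|_C=1/r$ and every $a\in T$ has $|a|_2\le 1/r$, any representing $T$ contains functionals arbitrarily close to $r^{-1}e_i$. Their level set at scale $2^{-j}\approx 1/r$ is $\{i\}$, and it carries a fixed fraction of $\sum_k a_k$, so it survives your pruning. You then list all $n$ singletons, and the first bullet forces $L\gtrsim\log n$.

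Your Sudakov heuristic gives no contradiction in this example, since $\max_i r^{-1}|g_i|\approx\sqrt{2\log n/n}$ is tiny. It only bites when $2^{-j}|J|$ is bounded below in absolute terms. If you instead prune at an absolute threshold, the discarded pieces $\sum_{i\in J_j(a)}a_i|x_i|$ cannot be ``absorbed into the $\ell^1$ bound''. The hypotheses control $x$ only through $\ell^2$-averages on the listed sets, and not at all off them, so small coefficient mass does not give a small contribution.

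In this example a valid list is just $\{[n]\}$; the singleton functionals are controlled only because $\{i\}\subset[n]$. In general you must dominate arbitrary level sets by a sparse family of sets with $|I_k|\gtrsim\log(k+1)$. Producing such a family is a genuine structure theorem for unconditional Gaussian norms, not something a union-bound or chaining sketch delivers.

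That theorem is what the paper uses: Proposition~\ref{prop: unconditional} (\cite[Theorem 19.2.15, Exercise 19.2.16]{Tal21}). It is applied to $\|\cdot\|_C$ with the standard basis, which is $1$-unconditional because $C$ is solid. It yields sets $I_k$ with $|I_k|\ge\log(k+1)$ and $\|x\|_C\le LS\sup_k\bigl(|I_k|^{-1}\sum_{i\in I_k}x_i^2\bigr)^{1/2}$, where $S=\E\|G\|_C$. The only additional input is the dimension-free bound on $S$, which is your Step 1. Then $\sum_{i\in I_k}x_i^2\le\varepsilon^2L^{-2}S^{-2}|I_k|$ for all $k$ gives $x\in\varepsilon C\subset A_{(3)}$. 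Replacing your Steps 2--3 by this citation closes the gap.
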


The proof of this implication relies on the following result. 
\begin{prop}[{\cite[Theorem 19.2.15, Exercise 19.2.16]{Tal21}}]\label{prop: unconditional}
    There exists $L > 0$ so that the following is true. Let $\{e_{i}\}_{i\leq n}$ be a $1$-unconditional sequence in a Banach space $(X, \norm{\cdot})$, and let $S = \mathbb{E}\left[\norm{\sum_{i=1}^{n}g_{i}e_{i}} \right],$ where $\{g_{i}\}_{i \leq n}$ are independent standard Gaussian random variables. Then we can find a finite sequence $\{I_{k}\}_{k \geq 1}$ of subsets of $[n]$ such that 
    \begin{itemize}
        \item $|I_{k}| \geq \log(k+1)$ for each $k$,
        \item For any $x \in X$ of the form $x = \sum_{i \leq n} x_{i}e_{i}$, we have 
        $$\norm{x} \leq LS\sup_{k \geq 1} \left(\frac{1}{|I_{k}|} \sum_{i \in I_{k}} x_{i}^{2}\right)^{\frac{1}{2}}.$$
    \end{itemize}
\end{prop}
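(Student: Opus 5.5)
We would derive this from Talagrand's structure theorem for suprema of Gaussian processes over solid sets, via duality. Let $T$ be the unit ball of the dual space, viewed through coordinates $t_i = t(e_i)$. By $1$-unconditionality, $\norm{\sum_i x_ie_i} = \sup_{t\in T}\sum_i t_ix_i$ and $T$ is solid: it is stable under coordinate sign changes and under coordinatewise shrinking. Then
$$S=\E\sup_{t\in T}\sum_i t_ig_i .$$
For a subset $I\subseteq[n]$, set
$$B_I=\Bigl\{t:\ \mathrm{supp}(t)\subseteq I,\ \sum_{i\in I}t_i^2\le |I|^{-1}\Bigr\},$$
so that $\sup_{t\in B_I}\sum_i t_ix_i=(|I|^{-1}\sum_{i\in I}x_i^2)^{1/2}$ by Cauchy--Schwarz. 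The goal therefore reduces to the inclusion
$$T\subseteq LS\,\overline{\mathrm{conv}}\Bigl(\bigcup_k B_{I_k}\Bigr),\qquad |I_k|\ge\log(k+1).$$
Once this holds, pairing with $x$ gives the stated norm bound immediately.

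To prove the inclusion, we first invoke the majorizing measure theorem. It gives an admissible sequence of partitions $\mathcal{A}_m$ of $T$, with $|\mathcal{A}_m|\le N_m:=2^{2^m}$, and points $\pi_m(t)$ such that
$$\sup_{t\in T}\sum_{m\ge0}2^{m/2}\,\|\pi_m(t)-\pi_{m-1}(t)\|_2\le LS.$$
We then decompose each $t\in T$ as $t=\sum_m u^{(m)}(t)$. Each piece $u^{(m)}$ should be supported on a set $J_m(t)$ of cardinality at most $2^m$. It should also satisfy $\|u^{(m)}\|_2\lesssim \Delta_m(t)$, where $\sum_m 2^{m/2}\Delta_m(t)\le LS$. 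Crucially, $J_m(t)$ should depend only on the cell of $\mathcal{A}_m$ (or a bounded refinement of it) containing $t$. Then at level $m$ there are at most $N_m$ (or $N_{m+c}$) distinct sets. We pad each such set to size $2^m$, which only helps, and enumerate all of them over all levels $m$ into one sequence $(I_k)$. The index of a level-$m$ set is at most $\sum_{j\le m}N_j\le N_{m+1}$, so $\log(k+1)\lesssim 2^m\le|I_k|$; the constant loss is absorbed into $L$.

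With this in hand, we would write
$$\langle t,x\rangle=\sum_m\langle u^{(m)},x\rangle\le\sum_m\|u^{(m)}\|_2\,|J_m|^{1/2}\sup_k\Bigl(|I_k|^{-1}\sum_{i\in I_k}x_i^2\Bigr)^{1/2},$$
and bound $\sum_m\|u^{(m)}\|_2\,2^{m/2}$ by $LS$. Taking the supremum over $t\in T$ finishes the proof.

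The main obstacle is constructing the pieces $u^{(m)}$ with small support, since the chaining increments $\pi_m(t)-\pi_{m-1}(t)$ themselves are not sparse. Here we would use solidity, following Talagrand's treatment of unconditional processes. The first attempt is to take $J_m(t)$ to be the set of the roughly $2^m$ coordinates where $|t_i|$ is largest but not already captured at earlier levels, and to take $u^{(m)}=t\,1_{J_m\setminus J_{m-1}}$. One must then show, using that the chaining functional is decreasing under coordinate truncation (solidity lets us replace $T$ by truncated pieces without increasing $S$), that
$$\|t\,1_{J_m\setminus J_{m-1}}\|_2\lesssim 2^{-m/2}\times(\text{level-}m\text{ contribution}),$$
and that these sets can be chosen measurably with respect to a partition of size at most $N_{m+c}$. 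If this direct truncation fails, the fallback is Talagrand's decomposition theorem for solid sets (Bernoulli/Gaussian "positive part" arguments), applied iteratively with the admissible sequence.
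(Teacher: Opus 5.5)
\paragraph{Verdict: genuine gap.} The paper does not prove this proposition; it imports it as a black box from \cite[Theorem 19.2.15, Exercise 19.2.16]{Tal21}. Your proposal therefore has to stand on its own, and as written it does not. The preliminary steps are fine:
\begin{itemize}
\item the coordinate image $T$ of the dual unit ball is solid by $1$-unconditionality, and $S=\E\sup_{t\in T}\langle t,g\rangle$;
\item it suffices to prove $T\subseteq LS\,\overline{\mathrm{conv}}(\bigcup_k B_{I_k})$;
\item \emph{if} at each level $m$ you had a family of at most $N_{m+c}$ sets of size $\lesssim 2^m$ supporting pieces $u^{(m)}$ of every $t\in T$ with $\sum_m 2^{m/2}\|u^{(m)}\|_2\le LS$, then enumeration and padding would give $|I_k|\ge\log(k+1)$ and the norm bound, after adjusting constants.
\end{itemize}
But constructing those pieces is the entire content of Talagrand's theorem. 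You flag it as the main obstacle and then offer a ``first attempt'' that fails. Your ``fallback'' (Talagrand's decomposition theorem for solid sets) is essentially a citation of the result being proved.

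\paragraph{Why the first attempt fails.} The $\ell^2$ estimate you want for the blocks of the decreasing rearrangement is trivial for solid $T$ and uses no chaining at all. The box $\{s:|s_i|\le|t_i|\}$ lies in $T$, so $S\ge\sqrt{2/\pi}\,\|t\|_1$. Comparing each dyadic block with the previous one then gives $\sum_m 2^{m/2}\|t1_{J_m\setminus J_{m-1}}\|_2\lesssim\|t\|_1\lesssim S$. The only real constraint is the cardinality, and top-$2^m$ supports violate it.
\begin{itemize}
\item Take $T=B_{\ell_1^n}$ (the dual ball of $\ell_\infty^n$, with $S\asymp\sqrt{\log n}$). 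At level $0$ you have $O(1)$ cells, yet the top coordinate of $t$ ranges over all of $[n]$. At level $m$ the top-$2^m$ sets take $\binom{n}{2^m}$ values, and $\log\binom{n}{2^m}\ge 2^m\log(n/2^m)$, which for large $n$ far exceeds $2^{m+c}$. So no partition into $N_{m+c}$ cells makes these sets cell-measurable. The correct $I_k$ here are blocks of size $\approx\log n$ partitioning $[n]$, which are not the supports of the large coordinates of any particular $t$.
\item You also cannot drop measurability and simply list every top set that occurs. Let $T\subset\R^N$ be the convex hull of the boxes generated by $e_1$ and by $N^{-1}1_{[N]}$. Then $T$ is solid with $S=O(1)$, and all $N$ singletons occur as top-$1$ sets. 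Listing them forces most of them to be padded to size $\approx\log N$. Nothing in the top-coordinate rule gives $\{1\}$ (where $T$ carries mass $1$) priority over the others (where it carries mass $1/N$). If $\{1\}$ is padded, the piece $u^{(0)}=e_1$ costs $\sqrt{\log N}$, whereas $I_1=\{1\}$, $I_2=[N]$ already works with an absolute constant.
\end{itemize}

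\paragraph{What is missing.} The level-$m$ sets must be selected from the structure of $T$, weighted by the mass they carry, chosen uniformly over a cell, and in general coarser than the individual supports. They must still capture the $\ell^2$-mass of every $t$ in the cell at a summable rate. Producing such sets $J_m(A)$ from the admissible sequence is exactly the missing idea. Your sentence that these sets ``can be chosen measurably with respect to a partition of size at most $N_{m+c}$'' simply assumes it.
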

Here $\{e_{i}\}_{i \leq n}$ being a $1$-unconditional sequence means that for any $\{a_{i}\}_{i \leq n}$, $\{\eps_{i}\}_{i \leq n}$ where each $\eps_{i}$ is either $1$ or $-1$, we have 
$$\norm{\sum_{i\leq n} \eps_{i}a_{i} e_{i}} = \norm{\sum_{i \leq n} a_{i}e_{i}}$$ (see \cite[Definition 19.2.12]{Tal21}). 
Recall the standard fact that if $K$ is a symmetric, compact, convex body containing $0$ in $\RR^{n}$, then $K$ defines a Banach norm on $\RR^{n}$ by
$$\norm{x}_{K} \coloneqq \inf\{\lambda \mid x \in \lambda K\}.$$ In our proof, we will produce such a $K$ by modifying the convex body $C$ obtained in Theorem \ref{thm: solid}.
\begin{proof}[Proof of Theorem \ref{thm: logsequences}]
    Let $\varepsilon>0$ be sufficiently small so that Theorem \ref{thm: solid} and the improvement in Remark \ref{rmk: strictlygreater} apply. Then there exists a solid, compact, convex body $C$ with $\gamma_{n}(C) \geq \frac{2}{3}$, $C \subset \varepsilon^{-1}A_{(3)}$. 
    We take $\{e_{i}\}_{i \leq n}$ to be the standard basis for $\RR^{n}$. Since $C$ is solid (and any rescaling of $C$ is solid), it is straightforward to see that $\{e_{i}\}_{i \leq n}$ is a $1$-unconditional sequence with respect to $\norm{\cdot}_{C}$.
    Therefore we may extract a sequence  $\{I_{k}\}_{k \geq 1}$ of subsets of $[n]$ satisfying the two properties given in Proposition \ref{prop: unconditional} with constant $L$. This sequence satisfies the first desired property in Theorem \ref{thm: logsequences}, so it remains to verify the second for the constant $\eps^{-2}L^{2}S^{2}$.

    Suppose $x \in \RR^{n}$ satisfies $\sum_{i \in I_{k}}x_{i}^{2} \leq \eps^{2} L^{-2}S^{-2}|I_{k}|$ for all $k$. By the second property in Proposition \ref{prop: unconditional}, it follows that 
    \begin{align*}
        \norm{x}_{C} &\leq LS\sup_{k \geq 1} \left(\frac{1}{|I_{k}|} \sum_{i \in I_{k}} x_{i}^{2}\right)^{\frac{1}{2}} \leq \eps.
    \end{align*}
By definition of $\norm{\cdot}_{C}$, we therefore have $x \in \eps C \subset A_{(3)}$.
So, the only remaining step is to verify that $S$ is bounded above by a constant independent of the dimension $n$. 
Since $\{e_{i}\}_{i \leq n}$ is just the standard basis, we have $$S = \mathbb{E}\left[\norm{G}_{C}\right]$$ where $G$ is a standard Gaussian random vector. Then $\mathbb{P}[\|G\|_C \le 1] \ge 2/3$ and so the  median of $\|G\|_C$ is at most $1$, thus a uniform bound on $S$ follows from Gaussian concentration \cite[Lemma 3.1]{LT91}. Another elementary way to check it is as follows.
We have 
$$\mathbb{P}[\norm{G}_{C} \geq t ] = 1- \gamma_{n}(tC),$$ since $tC$ is the closed ball of radius $t$ with respect to $\norm{\cdot}_{C}$. Applying a general form of the Ehrhard-Borell inequality \cite[Theorem 1.2]{Bor07}, we conclude that for $t \geq 1$, 
\begin{align}\label{eb}
\Phi_{1}^{-1}(\gamma_{n}(tC)) = \Phi_{1}^{-1}\left(\gamma_{n}\left(\frac{t}{2}C + \frac{t}{2}C\right)\right) \geq t\Phi_{1}^{-1}(\gamma_{n}(C)),
\end{align}
where $\Phi_{1}(a) = \gamma_{1}((-\infty, a])$. By assumption, we have $\gamma_{n}(C) \geq \frac{2}{3}$, so $\lambda \coloneqq \Phi_{1}^{-1}(\gamma_{n}(C))$ is a strictly positive constant. 
Now we write
\begin{align*}
    \mathbb{E}[\norm{G}_{C}] = \int_{0}^{\infty} \mathbb{P}[\norm{G}_{C} \geq t ] dt
    =  \int_{0}^{1}1- \gamma_{n}(tC)dt + \int_{1}^{\infty}1 - \gamma_{n}(tC)dt.
    \end{align*}
The first integral is trivially bounded above by $1$, whereas the second integral may be bounded by 
    \begin{align*}
     \int_{1}^{\infty}1 - \gamma_{n}(tC)dt &\leq \int_{1}^{\infty}1- \Phi_{1}(t\lambda) dt\\
    &\leq \int_{0}^{\infty} 1- \Phi_{1}(t\lambda) dt\\
    &= \int_{0}^{\infty} \mathbb{P}(\frac{1}{\lambda}g \geq t)dt\\
    &= \frac{1}{2}\mathbb{E}\left[\frac{1}{\lambda }|g|\right],
\end{align*}
where $g$ is a standard Gaussian random variable. In the first inequality we have used the monotonicity of $\Phi_{1}$ and (\ref{eb}). Therefore $S$ is bounded above by the uniform constant $1 + \frac{1}{2}\mathbb{E}\left[\frac{1}{\lambda }|g|\right]$, and we are done.
\end{proof}

Finally, as explained in the comment after \cite[Problem 3.4]{Tal95} (with slightly modified statements), Theorem \ref{thm: logsequences} in turn implies Corollary \ref{cor:comb}.
\begin{proof}[Proof of Corollary \ref{cor:comb}]
Let $t>0$ be such that $p=\gamma_1(|x|\geq t)$ and consider the map $F  \colon \R^N\to \{0,1\}^N$ given by $F((x_1,...,x_N)) := (1_{|x_1|\geq t},...,1_{|x_N|\geq t})$.  Let us assume that $A$ is decreasing, namely $y\in A$ whenever $y\subset x$ for some $x\in A$.
This can always be assumed when proving Corollary \ref{cor:comb} because if $A$ is not decreasing and if $A_0$ is the smallest decreasing subset of $\{0,1\}^N$ containing $A$, then $A^{(q)} = A_0^{(q)}$. 
Let $\varepsilon,L>0$ be the constants  given by Theorem \ref{thm: logsequences}, and suppose that 
$\mu_p(A)\geq  1-\varepsilon$. Let $\{I_k\}_{k\geq 1}$ be the subsets of $[N]$ given by Theorem \ref{thm: logsequences}. Note that  $\mu_p(A) = \gamma_N(F^{-1}(A))$ and that $F^{-1}(A)$ is solid since $A$ is decreasing.
Consider the family $\mathcal{J}=\cup_{k} \mathcal{J}_k$ where $\mathcal{J}_k$  is the family of subsets of $I_k$ with cardinality $\left\lceil \frac{|I_k|}{9t^2L} \right\rceil$. Let us prove that $A^{(3)}$ is $p^{L'}$-small for some universal $L'\geq 1$.  

First, we need to show that $A^{(3)} \subset \bigcup_{J\in \mathcal{J}}H_J$ where $H_J:= \{J' \subset [N] \,|\,J\subset J'\}$. Consider $y\in \{0,1\}^N$ and assume that for all $J\in \mathcal{J}$, there is an index $i$ in $J$ so that $y_i=0$. Set $x=3ty$.
Then it is enough to check that \begin{equation}\label{f-1a}
x\in F^{-1}(A)+F^{-1}(A)+F^{-1}(A)
\end{equation}
since then, $x=a+b+c$ where $a,b, c \in F^{-1}(A)$ and when $y_i=1$, then either $a_i\geq t$ or $b_i\geq t$ or $c_i\geq t$ so that either $F(a)_i=1$ or $F(b)_i=1$ or $F(c)_i=1$, which means $A^{(3)} \subset \bigcup_{J\in \mathcal{J}}H_J$ as wanted. Suppose that  (\ref{f-1a}) were not true. Then by  Theorem \ref{thm: logsequences} we have
$$|\{i\in I_k\,| \, y_i=1\}| \geq \left\lceil \frac{|I_k|}{9t^2L} \right\rceil$$
for some $k$ (note that the left hand side is an integer), and so we can find $J\in \mathcal{J}$ with $y_i=1$ for all $i\in J$, contradiction. 

Second, we need to prove that 
\begin{equation}\label{sumJJ}
\sum_{J\in \mathcal{J}} \mu_{p^{L'}}(H_J) = \sum_{J\in \mathcal{J}} p^{L'|J|} \leq 1/2
\end{equation}
for some universal $L'\geq 1$. Given a parameter $M\geq 1$, we have for each $k$:
$$\sum_{J\in \mathcal{J}_k} p^{M|J|} = p^{M \left\lceil\frac{|I_k|}{9t^2L} \right\rceil} \binom{|I_k|}{\left\lceil\frac{|I_k|}{9t^2L} \right\rceil} \leq p^{M\left\lceil\frac{|I_k|}{9t^2L} \right\rceil} (9et^2L)^{\left\lceil\frac{|I_k|}{9t^2L} \right\rceil}.$$ Using that $p=\gamma_1(\{|x|\geq t\}) \leq e^{-t^2/2}$, that $M \left \lceil \frac{|I_{k}|}{9t^{2}L} \right \rceil \geq M \frac{|I_{k}|}{18t^{2}L} + \frac{M}{4}$, and that $(ez)^{1/z}\leq e$ for $z = 9t^2L$, we obtain that the previous expression is at most
$$\exp\left(-|I_k|\left(\frac{M}{36L} -1\right)\right) \cdot 9e^{1- \frac{Mt^{2}}{8}}t^2L \leq (k+1)^{-(\frac{M}{36L} -1)/L} \cdot 9e^{1- \frac{Mt^{2}}{8}}t^2L $$
since $L|I_k|\geq \log(k+1)$. Then (\ref{sumJJ}) follows by choosing $M$ large enough with respect to $L$, as well as choosing $M > 8$ so that $e^{1- \frac{Mt^{2}}{8}} t^{2} \leq 1$, and finally taking $L':=M$.

\end{proof}

\section{Coupling from Laguerre tessellations}\label{section:laguerre}

A statement weaker than Proposition \ref{log-concave} is enough to prove Theorem \ref{thm:main}: one only needs to find a martingale coupling between $X$ and the standard Gaussian $G$ such that $G$ conditioned on $X=x_j$ has a $1$-uniform log-concave distribution for any $x_j$. 
This fact can be proved using Proposition \ref{prop:tessellation} below, which was generated by ChatGPT-5.5 Pro. 
The results presented in Section \ref{entropy} are more general as explained at the end of this section.

Before stating the proposition, let us review some definitions.
We say $R \subset \RR^n$ is a convex polyhedral subset if it has nonempty interior and is the intersection of a finite number of affine half-spaces. A collection of convex polyhedral subsets $\{R_{j}\}_{j=1}^{J}$ of $\R^{n}$ is a convex polyhedral partition if their union is $\R^{n}$, and the intersections $R_{j} \cap R_{j'}$ for $j \neq j'$ have $\gamma_{n}$-measure zero.

\begin{prop}\label{prop:tessellation}
Let 
$X\sim  \sum_{i=1}^{m} p_{i}\delta_{x_{i}}$ be a centered finitely supported random vector in  $\RR^{n}$. 
Suppose that for some $0 < \rho < 1$ and some $\hat{G}\sim \mathcal{N}(0,I_n)$,  
$$X \preceq_{\mathrm{cx}} \rho \hat{G}.$$
Then there exists a convex polyhedral partition $\{R_{1}, \ldots, R_{m}\}$ of  $\RR^{n+m}$ with non-empty interiors, so that for 
$$G = (Z, \eta_{1}, \ldots, \eta_{m}) \sim \mathcal{N}(0,I_{n+m}), \quad Z \in \RR^{n},\,\, \eta_{i} \in \RR$$ we have 
$$\gamma_{n+m}(R_{i}) = p_{i} \quad \text{and}\quad \mathbb{E}[Z \mid G \in R_{i}] = x_{i}.$$
\end{prop}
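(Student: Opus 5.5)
Since $X\preceq_{\mathrm{cx}}\rho\hat G$ with $\rho<1$, we have slack in convex domination. The idea is to exploit the extra $m$ Gaussian coordinates to turn the softmax densities of Proposition \ref{log-concave} into indicator functions of Laguerre cells (power diagram cells). Concretely, we look for parameters $a_i\in\R$, $V_i\in\R^n$ and $c_i\in\R$, and define
$$R_i=\Big\{(z,\eta)\in\R^{n+m} : a_i+\langle V_i,z\rangle+c_i\eta_i\ \ge\ a_j+\langle V_j,z\rangle+c_j\eta_j\ \ \forall j\Big\}.$$
These are intersections of finitely many half-spaces, so they form a convex polyhedral partition up to null sets. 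We need to choose the parameters so that $\gamma_{n+m}(R_i)=p_i$ and $\E[Z1_{G\in R_i}]=p_ix_i$.

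The natural approach is a variational one, analogous to Section \ref{entropy} but with the log-sum-exp replaced by a max. Consider the concave function
$$h(a,V)=\sum_i p_i\big(a_i+\langle V_i,x_i\rangle\big)-\E\Big[\max_i\big(a_i+\langle V_i,Z\rangle+c\,\eta_i\big)\Big]$$
for a fixed constant $c>0$. Its gradient is exactly $p_i-\P(G\in R_i)$ in $a_i$ and $p_ix_i-\E[Z1_{G\in R_i}]$ in $V_i$. The function is differentiable because $G$ has a density and ties occur on null sets. So any critical point gives the desired partition. The $\eta_i$ are essential: they make the expected max strictly concave-ish and smooth, and they make each cell's $Z$-marginal spread out. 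Without them, $R_i$ would be determined by $z$ alone, the conditional laws would be degenerate, and the slack would not be used.

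To get a maximizer, I will mod out the gauge (common translation of all $a_i$ and of all $V_i$) exactly as in the definition of $\mathcal S$. I then need coercivity: $h\to-\infty$ as $\|(a,V)\|\to\infty$ in $\mathcal S$. Taking the recession function,
$$\lim_{\lambda\to\infty} \lambda^{-1}h(\lambda a,\lambda V)=\sum_ip_i\big(a_i+\langle V_i,x_i\rangle\big)-\E\Big[\max_i\big(a_i+\langle V_i,Z\rangle\big)\Big],$$
since the $c\eta_i$ term is bounded after scaling. So it suffices to show that this is strictly negative on the unit sphere of $\mathcal S$. This is where the slack enters.

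The main step, and the main obstacle, is this strict negativity. I would first try the same trick as in Section \ref{entropy}. Since $X\preceq_{\mathrm{cx}}\rho Z$, Strassen gives a martingale coupling $(X,\rho Z)$. By the argument in Proposition \ref{prop: conversedomination} (Jensen applied to the convex function $\max_i(a_i+\langle V_i,\cdot\rangle)$), this yields
$$\sum_ip_i\big(a_i+\langle V_i,x_i\rangle\big)\le \E\Big[\max_i\big(a_i+\rho\langle V_i,Z\rangle\big)\Big].$$
It remains to show that $\E[\max_i(a_i+\rho\langle V_i,Z\rangle)]<\E[\max_i(a_i+\langle V_i,Z\rangle)]$ whenever $(a,V)\in\mathcal S\setminus\{0\}$. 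The map $t\mapsto\E[\max_i(a_i+t\langle V_i,Z\rangle)]$ is convex. Its derivative at $t=0$ equals $\E\langle V_{i^*},Z\rangle=0$, where $i^*$ is the maximizer among the $a_i$, provided this maximizer is unique; otherwise the function is still minimized at $t=0$ by symmetry of $Z$ and Jensen. So the map is nondecreasing on $[0,\infty)$, and strictly increasing unless all $V_i$ coincide on the top face.

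The degenerate case $V=0$ needs separate handling. There $\sum p_ia_i=0$ with $a\neq0$, so $\sum p_ia_i<\max_i a_i$ strictly, which gives negativity directly. A compactness argument on the unit sphere then upgrades strict negativity to a uniform negative bound. With coercivity in hand, a maximizer exists, and differentiability of $h$ gives $\gamma_{n+m}(R_i)=p_i$ and $\E[Z\mid G\in R_i]=x_i$. Each $R_i$ then has positive measure, hence nonempty interior.
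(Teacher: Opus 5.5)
Your route is different from the paper's, and it is viable. For this proposition the paper only sketches a derivation from the boundary characterization \cite[Theorem 3.5]{inverselaguerre}: add one Gaussian coordinate at a time, push a scaling parameter to the boundary of the convex-order set, subdivide, and recurse. That route needs only $X\preceq_{\mathrm{cx}}\hat G$, but it yields only a hierarchical Laguerre tessellation. Your argument is a one-shot variational one, the max-analogue of the function $g$ of Section~\ref{entropy}. The independent coordinates $\eta_i$ make every tie a null hyperplane, so $h$ is $C^1$ for all parameters. You use the slack $\rho<1$ for coercivity and obtain a single Laguerre tessellation, which matches the paper's description of the ChatGPT-generated proof. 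The following parts are all correct: concavity, gauge invariance, the gradient formula, and the recession limit. Indeed, writing $\phi(a,V)$ for your recession function, $|h(\lambda a,\lambda V)-\lambda\phi(a,V)|\le c\,\E\max_i|\eta_i|$, so coercivity is immediate once $\phi<0$ on the sphere. The Jensen bound $\sum_ip_i(a_i+\langle V_i,x_i\rangle)\le\psi(\rho)$, with $\psi(t):=\E[\max_i(a_i+t\langle V_i,Z\rangle)]$, and the case $V=0$ are also correct.

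The gap is in the step you call the main obstacle. Your criterion ``strictly increasing unless all $V_i$ coincide on the top face'' is wrong. Your first-order reasoning only shows that $\psi'(0^+)=\E[\max_{i\in I^*}\langle V_i,Z\rangle]>0$ unless the $V_i$ with $i\in I^*:=\mathrm{argmax}_i\,a_i$ coincide. When $I^*$ is a singleton, which is the generic case, this condition holds vacuously. So as written you have no strict inequality $\psi(\rho)<\psi(1)$ for, e.g., $a$ with a unique maximizer and $V\neq0$; only $V=0$ is treated separately.

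Strictness has to come from a global argument. For $i^*\in I^*$, pointwise $\max_i(a_i+\langle V_i,Z\rangle)\ge a_{i^*}+\langle V_{i^*},Z\rangle$, and the right side has mean $\psi(0)$. So $\psi(1)=\psi(0)$ forces $a_i-a_{i^*}+\langle V_i-V_{i^*},z\rangle\le 0$ for a.e., hence every, $z\in\R^n$. That means $V_i=V_{i^*}$ for all $i$, which on $\mathcal S$ means $V=0$. Hence $\psi(1)>\psi(0)$ whenever $V\neq0$, and convexity gives $\psi(\rho)\le(1-\rho)\psi(0)+\rho\psi(1)<\psi(1)$.

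Alternatively, you can import the trick of Section~\ref{entropy}. The slack gives conditional densities $f_i\ge 1-\rho$ for a martingale coupling of $X$ and $Z$. With $\ell_i(z)=a_i+\langle V_i,z\rangle$, this yields $\phi(a,V)=-\E[\sum_ip_if_i(Z)(\max_j\ell_j(Z)-\ell_i(Z))]\le-(1-\rho)\,\E[\max_j\ell_j(Z)]$. The right side is $<0$ on $\mathcal S\setminus\{0\}$, since there $\sum_ip_i\ell_i\equiv0$. With either repair, the rest of your proof goes through.
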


Now if $\Pi:\R^{n+m}\to \R^n$ denotes the standard projection, then $\Pi(G)\sim \mathcal{N}(0,I_n)$ and Proposition \ref{prop:tessellation} produces a martingale coupling $(X,\Pi(G))$ such that $\Pi(G)$ conditioned on $X=x_j$ has probability distribution proportional to the pushforward measure $\Pi_*(\gamma_{n+m}\big|_{R_j})$, where  $\gamma_{n+m}\big|_{R_j}$ denotes the restriction of $\gamma_{n+m}$ to $R_j$. Since $R_j$ is a convex domain and $\gamma_{n+m}$ is 1-uniformly log-concave, $\Pi_*(\gamma_{n+m}\big|_{R_j})$ is  1-uniformly log-concave as well, which is what we need for proving Theorem \ref{thm:main}.

The usefulness of a result like Proposition \ref{prop:tessellation} 
to prove Theorem \ref{thm:main} was explicitly highlighted in the first version of \cite[Subsection 2.4.3]{Son26}: in the language of \cite[Subsection 2.4.3]{Son26}, Proposition \ref{prop:tessellation} confirms that for $\kappa>0$ small enough, any finitely supported $\kappa$-subgaussian random vector in $\R^n$ is a ``simple random vector''.

In hindsight, Proposition \ref{prop:tessellation} follows from the study of Laguerre tessellations (also called power diagrams), and in particular from \cite[Theorem 3.5]{inverselaguerre}, which gives a complete description of the (relative) boundary of the convex set 
$$\mathcal{C}(p_{1}, \ldots, p_{m}, \gamma_{n}) \coloneqq \left\{(x_{1}, \ldots, x_{m}) \in (\RR^{n})^{m} \colon \sum_{i=1}^{m} p_{i} \delta_{x_{i}} \preceq_{\mathrm{cx}} \gamma_{n}\right\}.$$ We remark that a similar, more general set is studied in \cite{KMSW24}, although the focus of this latter paper is specifically on the extreme points of that set.

We briefly sketch the argument for deriving Proposition \ref{prop:tessellation} using \cite[Theorem 3.5]{inverselaguerre}: fix real numbers $a_{1}, \ldots, a_{m}$ (not all zero) satisfying $\sum_{i=1}^{m}p_{i}a_{i} = 0$, and consider the set of all $\lambda $ for which $((x_{1}, \lambda a_{1}), \ldots, (x_{m}, \lambda a_{m}))$ is in $\mathcal{C}(p_{1}, \ldots, p_{m}, \gamma_{n+1})$. The supremum $\lambda^{*}$ of this set has the property that $\sum_{i=1}^{m}p_{i}\delta_{(x_{i}, \lambda^{*}a_{i})}$ is in the boundary of $\mathcal{C}(p_{1}, \ldots, p_{m}, \gamma_{n+1})$. Now, \cite[Theorem 3.5]{inverselaguerre} tells us that we have a convex polyhedral partition $L_{1}, \ldots, L_{m'}$ of $\RR^{n+1}$ and a surjective map $\sigma \colon \{1, \ldots, m\} \to \{1, \ldots, m'\}$ (where $m' \geq 2$) so that for each $j \in \{1, \ldots, m'\}$, 
$$\sum_{i \in \sigma^{-1}(j)} p_{i}\delta_{(x_{i}, \lambda^{*}a_{i})} \preceq_{\mathrm{cx}} \gamma_{n+1}|_{L_{j}}.\footnote{We are guaranteed the additional property that $\gamma_{n+1}(L_{j}) = \sum_{i \in \sigma^{-1}(j)} p_{i}$, so we can rescale both to be probability measures.}$$ If we are in the boundary again (of the convex set $\mathcal{C}(p_{1}, \ldots, p_{m},  \gamma_{n+1}|_{L_{j}})$), we may immediately apply \cite[Theorem 3.5]{inverselaguerre} once more; if not, we can add a new dimension as above and maximize a new parameter $\lambda'$ to reach the boundary, and then proceed. Note that along this inductive process, the maximum number of points corresponding to the same convex region strictly decreases, so after adding at most $m$ new dimensions we obtain the desired result (since if $p_{i}\delta_{x_{i}} \preceq_{\mathrm{cx}} \mu|_{R_{i}}$, then $R_{i}$ has $\mu$-volume $p_{i}$ and barycenter $x_{i}$).

One minor difference between this approach and the proof generated by ChatGPT, albeit immaterial for the applications to the rest of the paper, is the fact that ChatGPT's proof uses the fact that $X \preceq_{\mathrm{cx}} \rho \hat{G}$ for some $0 < \rho < 1$, whereas our approach holds as long as $X \preceq_{\mathrm{cx}} \hat{G}$. This is because if $X  \not \preceq_{\mathrm{cx}} \rho \hat{G}$ for any $0 < \rho < 1$, then the support $(x_{1}, \ldots, x_{m})$ of $X$ must be in the boundary of $\mathcal{C}(p_{1}, \ldots, p_{m}, \gamma_{n})$, at which point we may apply \cite[Theorem 3.5]{inverselaguerre} immediately. Another difference is the nature of the convex polyhedral partition $\{R_{1}, \ldots, R_{m}\}$ of $\mathbb{R}^{n+m}$. The proof given by ChatGPT shows that we can take these regions to be the cells of a Laguerre tessellation, which is a convex polyhedral partition defined by a collection of affine functions $g_{1}, \ldots, g_{m} \colon \RR^{n+m} \to \RR$ such that 
$$R_{i} = \{x \colon g_{i}(x) \geq g_{j}(x) \; \forall j\}.$$ The above proof using \cite[Theorem 3.5]{inverselaguerre} only guarantees a hierarchical Laguerre tessellation, namely a convex polyhedral partition generated by iteratively subdividing (via further Laguerre tesselations) the cells of a Laguerre tessellation.

As with Proposition 3.6, both ChatGPT's proof and our sketched proof of Proposition \ref{prop:tessellation} hold with $\gamma_{n}$ replaced by a more general measure $\mu$, as long as $\mu$ satisfies standard assumptions such as being absolutely continuous with respect to the Lebesgue measure and having finite first moments. 

Finally, we briefly sketch how to recover the characterization of the relative boundary in \cite[Theorem 3.5]{inverselaguerre} (item 3 of that proposition) from Proposition \ref{log-concave}. Along with the previous discussion, this shows how one can obtain Proposition \ref{prop:tessellation} from Proposition \ref{log-concave}. For a probability measure $\mu'$ on $\mathbb{R}^n$, having support being in the relative interior of $\mathcal{C}(p_{1}, \ldots, p_{m}, \nu))$ is equivalent to the ``slack'' condition that $X'\preceq_{\mathrm{cx}} (1-\varepsilon)Y$ for some $\varepsilon>0$ and $X'\sim \mu'$, $Y\sim \nu$, 
since $(0, \ldots, 0)$ is in the relative interior of $\mathcal{C}(p_{1}, \ldots, p_{m}, \nu)).$
Now, let $\mu = \sum_{i=1}^{m} p_{i}\delta_{x_{i}}$ be a centered measure whose support is in the relative boundary of $\mathcal{C}(p_{1}, \ldots, p_{m}, \nu)$, where $\nu$ is a centered probability measure satisfying the assumptions of Proposition \ref{log-concave}. Further suppose that $\{\mu_{k}\}_{k=1}^{\infty}$ is a sequence of measures whose supports are in the relative interior of $\mathcal{C}(p_{1}, \ldots, p_{m}, \nu))$, and converge to the support of $\mu$. Then for each $k$ we have functions $f_{1k}, \ldots, f_{mk} \colon \RR^{n} \to \RR$ of the form
$$f_{ik}(x) = \frac{e^{U_{ik} + \langle V_{ik}, x \rangle}}{\sum_{\ell=1}^{m}p_{\ell} e^{U_{\ell k} + \langle V_{\ell k}, x \rangle}}$$ satisfying the properties guaranteed by the second part of Proposition \ref{log-concave}. 
Note that by dividing each numerator and denominator by $e^{U_{1k} + \langle V_{1k}, x \rangle}$, we may assume without loss of generality that $(U_{1k}, V_{1k}) = 0$ for all $k$. 

Let $N_{k} = \max_{1 \leq i \leq m} \norm{(U_{ik}, V_{ik})}$; we claim that $N_{k} \to \infty$. Suppose, for a contradiction, that this were not the case. Then by compactness we may assume, by passing to a subsequence, that each $(U_{ik}, V_{ik})$ converges to some $(U_{i}, V_{i})$. Taking the limits of the corresponding $f_{ik}$ gives nonnegative functions $f_{1}, \ldots, f_{m}$ of the form
$$f_{i}(x) = \frac{e^{U_{i} + \langle V_{i}, x \rangle}}{\sum_{\ell=1}^{m}p_{\ell} e^{U_{\ell } + \langle V_{\ell }, x \rangle}}.$$ These then constitute conditional densities giving a martingale coupling of $X \sim \mu$ and $Y \sim \nu$, which by the second part of Proposition \ref{log-concave} implies $X \preceq_{\mathrm{cx}} (1-\eps)Y$ for some $\eps > 0$, contradicting that $\mu$ is on the relative boundary of $\mathcal{C}(p_{1}, \ldots, p_{m}, \nu)$. So indeed $N_{k} \to \infty$.

Now let $(\widehat{U}_{ik}, \widehat{V}_{ik}) = \frac{1}{N_{k}}(U_{ik}, V_{ik})$; by compactness we may pass to a subsequence and obtain limits $(\widehat{U}_{ik}, \widehat{V}_{ik}) \to (\widehat{U}_{i}, \widehat{V}_{i})$. This then gives a surjective function $\sigma \colon \{1, \ldots, m\} \to \{1, \ldots, m'\}$, for some $m' \leq m$, defined so that $\sigma(i) = \sigma(i')$ iff $(\widehat{U}_{i}, \widehat{V}_{i}) = (\widehat{U}_{i'}, \widehat{V}_{i'})$. Note that, as in \cite[Theorem 3.5]{inverselaguerre}, we must have $m' \geq 2$, since at least one $(\widehat{U}_{i}, \widehat{V}_{i})$ must have strictly positive norm whereas $(\widehat{U}_{1}, \widehat{V}_{1}) = 0$ by construction. 

For each $j \in \{1, \ldots, m'\}$, we therefore have an affine function $g_{j} \colon \RR^{n} \to \RR$ given by $g_{j}(x) = \widehat{U}_{i} + \langle \widehat{V}_{i}, x \rangle$ for any $i \in \sigma^{-1}(j)$, along with the associated Laguerre cell
$$R_{j} =  \{x \colon g_{j}(x) \geq g_{j'}(x) \; \forall j' \in \{1, \ldots, m'\}\}.$$ This gives a Laguerre tessellation of $\RR^{n}$, which we claim satisfies the conditions of the third item in \cite[Theorem 3.5]{inverselaguerre}. To see this, note that for any $j \neq j' \in \{1, \ldots, m'\}$, any $x \in R_{j} \setminus R_{j'}$, and any any $i, i'$ with $\sigma(i) = j, \sigma(i') = j'$, we have
\begin{align*}
    \lim_{k \to \infty} f_{i'k}(x) &= \lim_{k \to \infty} \frac{1}{\sum_{\ell = 1}^{m} p_{\ell} e^{U_{\ell k} + \langle V_{\ell k}, x \rangle - U_{i'k} - \langle V_{i' k}, x \rangle}}\\
    &= \lim_{k \to \infty} \frac{1}{\sum_{\ell =1}^{m} p_{\ell}\left(e^{\widehat{U}_{\ell k} + \langle \widehat{V}_{\ell k}, x \rangle - \widehat{U}_{i'k} - \langle \widehat{V}_{i' k}, x \rangle} \right)^{N_{k}}}\\
    &= 0,
\end{align*}
where the last equality follows since $$\lim_{k \to \infty} e^{\widehat{U}_{i k} + \langle \widehat{V}_{i k}, x \rangle - \widehat{U}_{i'k} - \langle \widehat{V}_{i' k}, x \rangle} = e^{g_{j}(x) - g_{j'}(x)} > 1,$$ and $N_{k} \to \infty$. So, the only conditional densities which are possibly nonzero in $R_{j}$ (outside of the $\nu$-null intersections $R_{j} \cap R_{j'}$) are the limiting densities $f_{i}$ for which $i \in \sigma^{-1}(j)$, and so we have 
$$\sum_{i \in \sigma^{-1}(j)} p_{i} \delta_{x_{i}} \preceq_{\mathrm{cx}} \nu|_{R_{j}}$$ as desired.

\bibliographystyle{alpha}
\bibliography{main}

\end{document}